\documentclass[11pt, a4paper]{article}

\usepackage{inputenc} 
\usepackage[english]{babel}
\usepackage{dsfont}

\usepackage[totalheight=24 true cm, totalwidth=17 true cm]{geometry}
\usepackage[shortlabels]{enumitem}

\setlist[itemize]{noitemsep}

\usepackage{bbding}
\usepackage{amsmath,multirow}
\usepackage{amsthm}
\usepackage{amssymb}
\usepackage{subcaption}
\usepackage{mathrsfs}
\usepackage{mathtools}
\usepackage{ stmaryrd }
\usepackage{url}
\usepackage{wrapfig}
\usepackage{starfont}
\usepackage{pifont}
\usepackage{eurosym}
\usepackage{xcolor}

\usepackage{imakeidx}
\makeindex[]

\usepackage{multicol}
\usepackage{relsize}
\usepackage{float}
\usepackage{tikz,pgf}
\usepackage{tikz-cd}
\usepackage{wasysym}
\usepackage{graphicx}
\usepackage{fancyhdr}
\usepackage{verbatim}

\usepackage{pgfplots}
\pgfplotsset{compat=1.15}
\usepackage{mathrsfs}
\usetikzlibrary{arrows}

\usepackage[all,dvips,knot,web,arc,curve,color,frame]{xy}
\usepackage[all,knot,arc,color,web]{xy}
\xyoption{arc}
\xyoption{web}
\xyoption{curve}

\numberwithin{equation}{section}

\theoremstyle{plain}
\newtheorem{theorem}{Theorem}[section]
\newtheorem{proposition}[theorem]{Proposition}
\newtheorem{corollary}[theorem]{Corollary}
\newtheorem{lemma}[theorem]{Lemma}
\newtheorem{notation}[theorem]{Notation}

\newtheorem{definition}[theorem]{Definition}

\theoremstyle{definition}

\newtheorem{remark}[theorem]{Remark}

\newtheorem{example}[theorem]{Example}
\newtheorem{question}[theorem]{Question}
\usepackage[bookmarksnumbered=true, pdfencoding=auto]{hyperref} 
\hypersetup{
     colorlinks = true,
     linkcolor = blue,
     anchorcolor = blue,
     citecolor = teal,
     filecolor = blue,
     urlcolor = blue
     }
\frenchspacing

\newcommand\restr[2]{{
  \left.\kern-\nulldelimiterspace 
  #1 
  \vphantom{\small|} 
  \right|_{#2} 
  }}

\everymath{\displaystyle}
\allowdisplaybreaks

\def\C{\mathbb{C}}
\def\Z{\mathbb{Z}}
\def\S{\mathbb{S}}
\def\Id{\mathbb{1}}
\def\H{\mathcal{H}}

\def\Hom{\operatorname{Hom}}

\def\sgn{\operatorname{sgn}}
\def\op{\operatorname{op}}
\def\Rep{\operatorname{Rep}}
\def\Irr{\operatorname{Irr}}

\def\End{\operatorname{End}}

\def\det{\operatorname{det}}

\def\dim{\operatorname{dim}}

\def\Im{\operatorname{Im}}
\def\Res{\operatorname{Res}}
\def\Rank{\operatorname{Rank}}
\def\Ind{\operatorname{Ind}}

\def\exp{\operatorname{exp}}
\def\Ker{\operatorname{Ker}}

\def\Diag{\operatorname{Diag}}

\def\tr{\operatorname{tr}}
\def\spam{\operatorname{span}}
\def\Id{\operatorname{Id}}

\makeatletter
\def\mathcenterto#1#2{\mathclap{\phantom{#1}\mathclap{#2}}\phantom{#1}}
\let\old@widetilde\widetilde
\def\widetildeto#1#2{\mathcenterto{#2}{\old@widetilde{\mathcenterto{#1}{#2\,}}}}
\let\old@widehat\widehat
\def\widehatto#1#2{\mathcenterto{#2}{\old@widehat{\mathcenterto{#1}{#2\,}}}}
\makeatother

\newcommand{\ip}[2]{\langle  #1,#2 \rangle} 
\newcommand{\size}[1]{\left| #1 \right|} 
\newcommand{\pare}[1]{\left( #1 \right)} 
\newcommand{\set}[1]{{\left\{ #1 \right\}}} 

\newcommand{\corch}[1]{\left[ #1 \right]} 
\def\one{\mathbbm{1}}

\newcommand{\rel}[1]{${\rm{(#1)}}$}

\def\dim{\operatorname{dim}}

\DeclareMathOperator{\sign}{sign}

\DeclareMathOperator{\CC}{\mathbb{C}}
\DeclareMathOperator{\FF}{\mathbb{F}}

\DeclarePairedDelimiter\floor{\lfloor}{\rfloor}

\hypersetup{
    colorlinks=true,
    linkcolor=blue,
    filecolor=magenta,      
    urlcolor=cyan,
    pdftitle={Overleaf Example},
    pdfpagemode=FullScreen,
    }
    
\urlstyle{same}
\def\Id{\operatorname{Id}}

\setlist[itemize]{noitemsep}

\normalfont

\usepackage{bbm}
\usepackage{dsfont}

\usepackage{url}
\usepackage{amsmath,blkarray,booktabs, bigstrut}
\usepackage{tikz}
\usepackage{multirow}
\usepackage{xcolor}
\usepackage{color}
\binoppenalty=10000 
\relpenalty=10000
\pretolerance=10000

\usepackage[maxbibnames=99]{biblatex}
\addbibresource{Citation.bib}

\def\g{\mathfrak{g}}
\def\sp{\mathfrak{sp}}
\def\h{\mathfrak{h}}

\def\Infl{\operatorname{Infl}}

\def\Sp{\operatorname{Sp}}
\def\SO{\operatorname{SO}}
\def\Sl{\operatorname{SL}}

\def\Gl{\operatorname{GL}}

\begin{document}

\title{Representations of Yokonuma--Hecke algebras.}
\author{
Emiliano Liwski \and
 Martín Mereb
 }


\date{}

\maketitle

\begin{abstract}
The Iwahori--Hecke and Yokonuma--Hecke algebras have played crucial roles in algebraic combinatorics and the representation theory of finite groups. In this work, we use classical results from representation theory to compute the character values of the Yokonuma--Hecke algebra at $w_{0}$ and $w_{0}^{2}$, where $w_{0}$ denotes the element of maximal length in the corresponding Weyl group.
\end{abstract}

{\hypersetup{linkcolor=black}
{\tableofcontents}}

\vspace{50pt}

\section{Introduction}

The study of Hecke algebras has long been a central theme in representation theory and algebraic combinatorics. Among these, the Iwahori--Hecke algebras and Yokonuma--Hecke algebras occupy significant positions given their deep connections with Coxeter groups, Weyl groups, and various other fundamental structures in Lie theory and algebraic groups. In this paper, we 
explore the irreducible representations of these 
interesting algebraic structures, aiming to compute the values of the characters of the Yokonuma--Hecke algebra at certain key elements.

Iwahori--Hecke algebras were first introduced in \cite{iwahori1964stucture,iwahori1965some}, to classify the irreducible representations of finite Chevalley
groups and reductive $p$-adic Lie groups. They arise naturally in the context of Coxeter groups, providing a $q$-deformation of the group algebra of a Coxeter group, for $q$ a given prime power. Given a Coxeter group $(W, I)$, the Iwahori--Hecke algebra $\mathcal{H}_q(W)$ is generated by elements corresponding to the generators of $W$ but subject to deformed relations. These algebras play a fundamental role in the representation theory of finite groups of Lie type, \cite{kazhdan1979representations,lusztig1983singularities}, and have profound implications in areas such as the theory of Macdonald polynomials, quantum groups, and the study of Schur--Weyl duality \cite{geck2000characters,carter1985finite}, knot theory and quantum groups \cite{jones1983index,drinfeld1986quantum,jones1989knot}.

On the other hand, Yokonuma--Hecke algebras, \cite{yokonuma1967structure,yokonuma1968commutant}, can be viewed as a natural extension of Iwahori--Hecke algebras. They are constructed by 
extending the Iwahori--Hecke algebra with an abelian group, thus expanding the 
algebraic structure and enriching the scope of research on the representation theory side. These algebras 
arose 
intending to study the representations of $p$-adic groups, \cite{iwahori1965some,matsumoto1969sous}, and have since found applications in representation theory and knot theory \cite{juyumaya2002markov, thiem2004unipotent}.

This paper is 
devoted to the study of the representations of Iwahori--Hecke and Yokonuma--Hecke algebras. We begin by reviewing the 
main constructions and properties of Coxeter groups and their associated Hecke algebras. We then delve into the structural aspects of Iwahori--Hecke algebras and Yokonuma--Hecke algebras, outlining key results and known representations~\cite{lusztig2003hecke,mathas1999iwahori,juyumaya2002markov}, focusing on the interplay between their structures.

The main goal of our work is to compute character values for specific elements within the Yokonuma--Hecke, namely at $w_{0}$ and $w_{0}^{2}$, where $w_{0}$ represents the element associated with the maximal length element of the corresponding Weyl group. Through several applications of Tits deformation theorem, we establish bijections between irreducible characters of these algebras and explore their implications. We follow the techniques of \cite{hausel2019arithmetic, geck2000characters, hausel2011arithmetic} used for the Chevalley group of type $A$. In addition to theoretical explorations, we provide concrete computations,  focusing exclusively on classical types \(A, B, C, D\), to illustrate our results. 

\medskip
\noindent {\bf Main results.} We 
denote by $G$ the Chevalley group constructed from a Lie algebra associated with one of the classical root systems, over the field $\FF_{q}$. In particular, we consider $G$ as one of the explicit Chevalley groups constructed in Subsection~\ref{explicit}. We denote by $B,U,N$ and $W$ the associated Borel subgroup,  unipotent radical, normalizer of the maximal torus, and Weyl group, respectively. Recall that the Iwahori--Hecke algebra and the Yokonuma--Hecke algebra of $G$ are defined as the Hecke algebras $\mathcal{H}(G,B)$ and $\mathcal{H}(G,U)$, respectively. Here is a summary of the main results of this paper.

\begin{itemize}
\item In Section~\ref{sec 3}, we establish a bijection 
between $\Irr(\mathcal{H}(G,U))$ and 
$\Irr(S_{q-1,n})$, where $S_{q-1,n}$ is an extension of the Weyl group. 
In Subsection~\ref{rep normalizador}, 
we compute 
$\Irr(S_{q-1,n})$.
\item In Subsection~\ref{son split}, we give a complete set of irreducible representations of the generic Yokonuma--Hecke algebra $Y_{q-1,n}$ for 
types $A$ and $B$. Observe that 
these representations are in bijection with $\Irr(\mathcal{H}(G,U))$. 
\item In Section~\ref{sec 5}, for each 
classical root system, we compute the character values of the Yokonuma--Hecke algebra $\mathcal{H}(G,U)$ at $w_{0}$ and $w_{0}^{2}$, where $w_{0}$ denotes the element of maximal length in the corresponding Weyl group.
\end{itemize}

{\noindent}
We would like to emphasize that the results presented here work for any finite field of characteristic other than two.

\medskip

\noindent {\bf Outline.} In Section~\ref{prel}, we introduce key objects, such as Chevalley groups, Iwahori--Hecke algebra, and Yokonuma--Hecke algebra. In Section~\ref{sec 3}, we 
define generic deformations of the Iwahori--Hecke algebra and the Yokonuma--Hecke algebra for each of the classical root systems. Using Tits deformation theorem, we 
obtain bijections between the irreducible characters of these algebras and the irreducible characters of the Weyl group and the normalizer of the maximal torus.
In Section~\ref{sec 4}, we use classical techniques from representation theory to compute the irreducible representations of a deformation of the Yokonuma--Hecke algebra. In Section~\ref{sec 5}, we use the results from Section~\ref{sec 4} to compute the values of the irreducible characters of the Yokonuma--Hecke algebra at $w_{0}$ and $w_{0}^{2}$, where $w_{0}$ denotes the maximal length element of the 
Weyl group. In Section~\ref{tables}, we illustrate these values using tables.

\section{Preliminaries}\label{prel}

In this section, we 
review the basic definitions of Hecke algebras and Chevalley groups.
For a more complete description of these objects 
see~\cite{steinberg1967lectures,thiem2004unipotent,hausel2019arithmetic,iwahori1964stucture}.

\begin{notation}\normalfont 
\phantom{ We will first start by fixing some notation. }
\begin{itemize} 
\item For a finite-dimensional $\CC$-algebra $A$, we 
denote by $\Rep(A)$ and $\Irr(A)$ its sets of representations and irreducible representations, respectively. If $A=\C[G]$ we denote them by $\Rep(G)$ and $\Irr(G)$ respectively, and we will also sometimes identify them with 
their set of 
characters. 
\item If $H$ is an $\C[u^{\pm 1}]$-algebra, we 
denote the extension $\C(u)\otimes_{\C[u^{\pm 1}]}H$ by $H(u)$. 
\item We 
denote by $[n]$ the set $\{1,\ldots,n\}$, and by $C_{d}$ the cyclic group of $d$ elements, that we sometimes identify with the set of $d$-th roots of unity.
\item For a Lie algebra $\g$ that is a subalgebra of matrices of $k^{n\times n}$, we 
refer to the representation over $k^{n}$ given by left multiplication by elements of $\g$ as the \textit{defining representation}. 
\item For a group $G$ 
acting on a set $X$, we denote by 
$\textbf{Stab}(x)$ the stabilizer of $x\in X$ under this action. 
\item If $T$ is a group, we 
denote by $T^{\ast}$ the set of group morphisms $\Hom(T,\C^{\times})$.
\item We 
denote by $\mathcal{P}$ the set of all partitions, and by $\mathcal{P}_{n}$ the set of partitions of size $n$. The 
set $\Irr(\mathbb{S}_n)$ is parametrized by $\mathcal{P}_{n}$, and for $\lambda \in \mathcal{P}_{n}$ we 
denote by $\S^{\lambda}\in \Irr(\S_{n})$ the associated representation. 
\item Let $p\in \Rep(G)$ 
and $H\leq G$ be a subgroup, when it is clear we 
denote the restriction $\restr{p}{H}$ as $p\rq$.
\item We say that a finite-dimensional $\C$-algebra $A$ is \textit{split} over a field $\C\subset k$ if for every $\widetilde{p} \in \Rep(A)$ over a field $k\rq$, with $k\subset k\rq$, there exists $p\in \Rep(A)$ defined over $k$ such that $\widetilde{p}=p\otimes_{k}k\rq$. 
\item We 
assume that all finite fields $\mathbb{F}_{q}$ are of odd characteristic, and all the results exhibited here will be under this assumption.
\end{itemize}

\end{notation}

\subsection{Hecke algebras}\label{Álgeb the set of partitions of size $n$ras de Hecke}

Let $H\leq G$ be finite groups. The \textit{Hecke algebra} associated with $G$ and $H$, which we denote by $\mathcal{H}(G,H)$, is the vector space of $H$-bivariate functions from $G$ to $\CC$ with pointwise sum,~i.e, functions $f:G\rightarrow \CC$ that satisfy that $f(h_{1}gh_{2})=f(g)$ for every $h_{1},h_{2}\in H$. The convolution gives the product 
 \[(\phi_{1}\ast \phi_{2})(g)=\frac{1}{\size{H}}\underset{x\in G}{\sum}\phi_{1}(x)\phi_{2}(x^{-1}g).\]

The algebra $\mathcal{H}(G,H)$ has a basis indexed by the double $H$-cosets in $G$. If $W\subset G$ is a complete system of representatives of the double $H$-cosets in $G$, then the indicators of the double cosets
 \begin{equation}\phi_{w}=\one_{HwH},\label{base Hecke}\end{equation}
 form a basis of $\mathcal{H}(G,H)$.

\medskip
\noindent{\bf Representations of Hecke algebras}

\medskip
\noindent Let $H\leq G$ be finite groups. 
Let $(V,\pi)\in \Irr(G)$ 
and let $V^{H}$ be the subspace fixed by $H$. Then, $V^{H}$ is a representation of  $\mathcal{H}(G,H)$ via the action 
\[\phi \cdot v=\frac{1}{\size{H}} \underset{a \in G}{\sum} \phi(a) a\cdot v,\]
for $\phi \in \mathcal{H}(G,H)$ and $v \in V^{H}$. Observe that we can identify
\[\Hom_{H}(\one_{H},\Res_{H}^{G} V)\cong V^{H}.\]
Then, we have a map 
\begin{equation*}
\begin{gathered}
D_{H}:\Rep(G)\rightarrow \Rep(\mathcal{H}(G,H)),\\
(V,\pi)\mapsto V^{H}.
\end{gathered}
\end{equation*}
On the other hand, let us denote 
\[\Irr(G:H)=\set{\eta \in \Irr(G):(\eta,\one_{H}^{G})>0};\]
note that this is equivalent to $\one_{H}$ being an irreducible factor of $\Res_{H}^{G}\eta$. We can give the following characterization of 
$\Irr(\mathcal{H}(G,H))$.
\begin{proposition}\textup{\cite[Proposition~2.10]{bernstein1976representations}}\label{D operador} 
Let $(V,\pi)$ be an irreducible representation of $G$ such that $V^{H}\neq 0$, then $V^{H}$ is an irreducible representation of $\mathcal{H}(G,H)$, and every irreducible representation arises in this way. Hence, $D_{H}$ restricts to a bijection $D_{H}:\Irr(G:H)\rightarrow \Irr(\mathcal{H}(G,H))$.
\end{proposition}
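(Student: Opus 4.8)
The plan is to realize $\mathcal{H}(G,H)$ as a corner algebra of the group algebra and then invoke the theory of idempotent truncation of a semisimple algebra. Concretely, I would introduce the symmetrizing idempotent
$e = \frac{1}{\size{H}}\sum_{h \in H} h \in \CC[G]$, which satisfies $e^{2}=e$, and observe that for any $(V,\pi)\in\Rep(G)$ the operator $\pi(e)=\frac{1}{\size{H}}\sum_{h}\pi(h)$ is the projection of $V$ onto $V^{H}$, so that $V^{H}=e\cdot V$ after identifying $V$ with a $\CC[G]$-module. Identifying an $H$-bi-invariant function $\phi$ with the element $\sum_{g}\phi(g)\,g\in\CC[G]$, a direct check shows that $\phi$ is $H$-bi-invariant precisely when it lies in $e\CC[G]e$, and that the rescaling $\phi\mapsto\frac{1}{\size{H}}\phi$ defines an algebra isomorphism $\mathcal{H}(G,H)\xrightarrow{\sim} e\CC[G]e$ carrying the convolution product $\ast$ to the product of $\CC[G]$ and the unit $\one_{H}$ to $e$. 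Under this identification the Hecke-algebra action $\phi\cdot v=\frac{1}{\size{H}}\sum_{a}\phi(a)\,a\cdot v$ on $V^{H}$ is exactly the restriction of the $e\CC[G]e$-action to $eV$, so the functor $D_{H}$ becomes the truncation functor $M\mapsto eM$.

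Next I would record the general statement for idempotent truncation: if $A$ is a finite-dimensional semisimple $\CC$-algebra and $e\in A$ an idempotent, then $eAe$ is semisimple and $M\mapsto eM$ induces a bijection between the isomorphism classes of simple $A$-modules $M$ with $eM\neq 0$ and the isomorphism classes of simple $eAe$-modules. Since $\CC[G]$ is semisimple by Maschke's theorem, this applies with $A=\CC[G]$. To prove it I would use the Artin--Wedderburn decomposition $\CC[G]\cong\prod_{i}\operatorname{Mat}_{d_{i}}(\CC)$, whose simple modules $V_{i}$ are the elements of $\Irr(G)$; writing $e=(e_{i})_{i}$ with each $e_{i}\in\operatorname{Mat}_{d_{i}}(\CC)$ an idempotent of rank $r_{i}=\dim V_{i}^{H}$, conjugating $e_{i}$ to a diagonal idempotent gives $e_{i}\operatorname{Mat}_{d_{i}}(\CC)\,e_{i}\cong\operatorname{Mat}_{r_{i}}(\CC)$. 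Hence $e\CC[G]e\cong\prod_{i:\,r_{i}>0}\operatorname{Mat}_{r_{i}}(\CC)$, which is semisimple, its simple modules being indexed by those $i$ with $V_{i}^{H}\neq 0$ and given explicitly by $e_{i}V_{i}=V_{i}^{H}$.

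From this the three assertions follow at once. Each $V^{H}$ with $V^{H}\neq 0$ is one of the blocks above, hence a simple $e\CC[G]e\cong\mathcal{H}(G,H)$-module; every simple $\mathcal{H}(G,H)$-module equals some $V_{i}^{H}$ with $V_{i}^{H}\neq 0$; and distinct indices $i$ give non-isomorphic blocks, so $D_{H}$ is injective on $\{V_{i}:V_{i}^{H}\neq 0\}$. Finally, by Frobenius reciprocity $(\eta,\one_{H}^{G})=\dim\Hom_{H}(\one_{H},\Res_{H}^{G}\eta)=\dim V_{\eta}^{H}$, so the condition $V_{\eta}^{H}\neq 0$ defining the source of the bijection is exactly $\eta\in\Irr(G:H)$; thus $D_{H}$ restricts to the claimed bijection $\Irr(G:H)\rightarrow\Irr(\mathcal{H}(G,H))$.

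I expect the main obstacle to be the abstract idempotent-truncation correspondence rather than the group-theoretic bookkeeping. Even though the Wedderburn argument makes the semisimple case short, the essential content---that truncation by $e$ sends a simple module to a simple-or-zero module, and that every simple $eAe$-module is recovered this way---is what genuinely requires care; a basis-free way to see the simplicity claim, which I would present alongside the matrix computation, is that for a simple $A$-module $S$ any nonzero $eAe$-submodule $N\subseteq eS$ satisfies $eS=eAN=eAeN\subseteq N$ (using $AN=S$ by simplicity and $eN=N$ since $e$ is the unit of $eAe$), forcing $N=eS$. The one mildly delicate bookkeeping point is fixing the normalization scalar $\frac{1}{\size{H}}$ so that $\mathcal{H}(G,H)\to e\CC[G]e$ is genuinely multiplicative and unital.
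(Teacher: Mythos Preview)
Your proof is correct. Note, however, that the paper does not actually prove this proposition: it is stated with a citation to \cite[Proposition~2.10]{bernstein1976representations} and used as a black box, so there is no ``paper's own proof'' to compare against. Your idempotent-truncation argument via $e=\frac{1}{\size{H}}\sum_{h\in H}h$ and Artin--Wedderburn is the standard modern route to this result and all the bookkeeping (the rescaling making $\mathcal{H}(G,H)\to e\CC[G]e$ multiplicative and unital, the match between the Hecke action and the $e\CC[G]e$-action on $eV$, and the basis-free simplicity argument) is handled correctly.
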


\subsection{Chevalley groups}\label{cheva}

The \textit{Chevalley group} can be constructed from any reductive Lie algebra $\g$, and is a necessary step for the definition of the Yokonuma--Hecke algebra. We follow the setting from \cite{steinberg1967lectures} and  \textup{\cite[§~2.2]{thiem2004unipotent}}.

\begin{definition}\normalfont
Let $\g$ be a semisimple Lie algebra. Let $R$ be the root system associated with $\g$, with $\Delta$ its set of simple roots and $\h$ a Cartan subalgebra. Let us also denote by $\g_{a}$ the eigenspace associated with the weight $a\in R$. For a system of elements $\set{x_{a}\in \g_{a},h_{a_{i}}\in \h: a\in R,a_{i}\in \Delta}$ we define the \textit{structure constants} as the values $c_{a+b}$ such that $\corch{x_{a},x_{b}}=c_{a+b}\ x_{a+b}$ and the values $a(h_{a_{i}})$ that satisfy $\corch{h_{a_{i}},x_{a}}=a(h_{a_{i}})x_{a}$.
\end{definition}

The following theorem gives us the basis with which we construct the Chevalley group.

\begin{theorem}[Chevalley basis]
\label{base de Chevalley}
It is possible to construct a basis of $\g$ of the form $\set{x_{a},\ h_{a_{i}},a\in R,a_{i}\in \Delta}$, with the property that all its structure constants are integer numbers.
\end{theorem}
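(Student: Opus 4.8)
The plan is to follow the classical argument of Chevalley and Steinberg: exhibit the root space decomposition, fix the toral part using the simple coroots, and then normalize the root vectors so that every bracket acquires integer coefficients. First I would use semisimplicity of $\g$ to write $\g = \h \oplus \bigoplus_{a \in R} \g_a$ with each root space $\g_a$ one-dimensional, and recall that for each $a \in R$ one can choose $x_a \in \g_a$ and $x_{-a} \in \g_{-a}$ forming an $\sl_2$-triple with $h_a := \corch{x_a, x_{-a}}$ satisfying $\corch{h_a, x_{\pm a}} = \pm 2 x_{\pm a}$. Since $\g$ is semisimple the simple coroots $h_{a_i}$ ($a_i \in \Delta$) form a basis of $\h$, so $\set{x_a, h_{a_i} : a \in R, a_i \in \Delta}$ is a basis of $\g$; the remaining task is purely about the structure constants.

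Next I would dispose of the brackets that are integral for formal reasons. One has $\corch{h_{a_i}, h_{a_j}} = 0$, and $\corch{h_{a_i}, x_a} = a(h_{a_i}) x_a$ where $a(h_{a_i}) = \langle a, a_i^\vee \rangle$ is a Cartan integer; moreover $\corch{x_a, x_b} = 0$ whenever $a + b \notin R \cup \set{0}$. For $\corch{x_a, x_{-a}} = h_a$ I would invoke the standard fact that each coroot $a^\vee$ is a $\Z$-linear combination of the simple coroots $a_i^\vee$, so $h_a$ is an integer combination of the $h_{a_i}$. This reduces everything to the constants $c_{a+b}$, which I denote $N_{a,b}$ to record the dependence on both roots, defined by $\corch{x_a, x_b} = N_{a,b}\, x_{a+b}$ when $a, b, a+b \in R$.

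The heart of the argument is the formula $N_{a,b} = \pm(p+1)$, where $p \geq 0$ is the largest integer with $b - pa \in R$. I would obtain this from two ingredients. Viewing the $a$-string $b - pa, \ldots, b + qa$ through $b$ as a module for the $\sl_2$ spanned by $\set{x_a, x_{-a}, h_a}$, the explicit weight-vector formulas for $\sl_2$-representations together with the Jacobi identity give the relation $N_{a,b}\, N_{-a,-b} = -(p+1)^2$. Separately, I would construct the Chevalley involution $\sigma \colon \g \to \g$ determined by $\sigma(x_a) = -x_{-a}$ and $\sigma|_{\h} = -\Id$; applying $\sigma$ to $\corch{x_a, x_b} = N_{a,b}\, x_{a+b}$ forces the sign relation $N_{a,b} = -N_{-a,-b}$. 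Combining the two yields $N_{a,b}^2 = (p+1)^2$, hence $N_{a,b} = \pm(p+1) \in \Z$, and integrality of all structure constants follows.

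The hard part is the existence of $\sigma$, equivalently a simultaneous normalization of the $x_a$ for which the sign relation holds globally, since the individual $\sl_2$ computations fix each $N_{a,b}$ only up to sign. I would build $\sigma$ via Serre's presentation of $\g$ by the Chevalley generators $e_i, f_i, h_i$: the assignment $e_i \mapsto -f_i$, $f_i \mapsto -e_i$, $h_i \mapsto -h_i$ visibly preserves the Serre relations, hence extends to an automorphism, and this is exactly the point where the global sign ambiguity is resolved. With $\sigma$ in hand the case analysis above shows all structure constants are integers, so $\set{x_a, h_{a_i}}$ is the desired Chevalley basis.
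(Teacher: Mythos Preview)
The paper does not prove this theorem; it is quoted without proof as classical background (in the spirit of Steinberg's lectures, which the paper cites) and is used only to set up the construction of Chevalley groups. Your sketch is the standard Chevalley--Steinberg argument and is correct in substance: the Cartan integers $a(h_{a_i})\in\Z$ and the fact that every coroot lies in the $\Z$-span of the simple coroots handle all brackets except $[x_a,x_b]$ with $a+b\in R$, and for those the product identity $N_{a,b}N_{-a,-b}=-(p+1)^2$ together with the sign relation $N_{a,b}=-N_{-a,-b}$ coming from the Chevalley involution force $N_{a,b}=\pm(p+1)$.

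One point of exposition is slightly out of order. You first fix arbitrary $\sl_2$-triples $(x_a,x_{-a},h_a)$ and only afterwards build $\sigma$, asserting $\sigma(x_a)=-x_{-a}$ for every root. But the automorphism produced from Serre's presentation is pinned down only on the simple generators; for a non-simple root it yields $\sigma(x_a)=c_a x_{-a}$ with some nonzero scalar $c_a$, and $c_a=-1$ does not follow from an independent prior choice of $x_a,x_{-a}$. The usual fix is to construct $\sigma$ first and then, for each positive $a$, \emph{define} $x_{-a}:=-\sigma(x_a)$ and rescale $x_a$ so that $[x_a,x_{-a}]=h_a$ still holds. You evidently see this --- you describe the existence of $\sigma$ as ``equivalently a simultaneous normalization of the $x_a$'' --- so this is a matter of presentation rather than a genuine gap.
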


\medskip
\noindent{}
{\bf Construction of Chevalley groups.}
Let $\g$ be a reductive Lie algebra, with $\h_s$ being a Cartan subalgebra of $\g_s=[g,g]$. Then we have that $\h=Z(\g)\oplus \h_s$ is a Cartan subalgebra of $g$. 
Let $R$ be the set of roots of $\g_{s}$, and let  $\set{a_1,
\ldots, a_l}$ be the set of simple roots of $R$. Since $\g_{s}$ is semisimple, we know that there exists a Chevalley basis
$\set{x_{a},h_{a_{i}}:a\in R,1\leq i\leq l}$,
like the one in Theorem~\ref{base de Chevalley}. Let $V$ be a finite-dimensional $\g$ module with representation map $\phi$. Assume that $V$ has a basis $\set{v_1,
\ldots, v_r}$ that satisfies
\begin{enumerate}[a)]
\item There exists a basis $\set{H_1,
\ldots, H_n}$ of $\h$ such that
\begin{enumerate}[1)]
\item $h_{a_i}\in \mathbb{Z}_{\geq 0}   \spam \set{H_1,
\ldots, H_n}$.
\item $H_{i}v_j\in \mathbb{Z}v_j$ for all $i\in [n]$ and $j\in [r]$.
\item $\Rank_{\mathbb{Z}}(\mathbb{Z} \spam \set{H_1,
\ldots, H_n})\leq \dim_{\mathbb{C}} \h$.
\end{enumerate}
\item $\dfrac{x_{a}^{n}}{n!} \ v_i\in \mathbb{Z} \spam \set{v_1,
\ldots ,v_r}$ if $n\geq 0$ and $a\in R$.
\item $\Rank_{\mathbb{Z}}\ (\mathbb{Z} \spam \set{v_1,
\ldots, v_r})\leq \dim_{\mathbb{C}}V$.
\end{enumerate}
Observe that $\set{v_{1},
\ldots, v_{r}}$ generate a lattice  $V(\mathbb{Z})$ invariant by $\exp(t\cdot \phi(x_{a}))$ and $\phi(H_{i})$. If $\g$ is semisimple, the existence of the lattice is guaranteed. For a prime power $q$, let 
\[\h_{\mathbb{Z}}=\mathbb{Z} \spam \set{H_1,
\ldots ,H_n}, \quad 
       \text{and} \quad V_q=\mathbb{F}_q  \spam \set{v_1,
       \ldots, v_r}.\] 
The {\em Chevalley group} $G_{V}\in \Gl_{r}(V_q)$ is defined by 
\[G_{V}=\ip{x_{a}(t),h_{H}(s):a\in R}{H\in \h_{\mathbb{Z}},t\in \mathbb{F}_q,s\in \mathbb{F}_q^{\times}},\]
where
\begin{equation*}
\begin{gathered}
h_{H}(s)=\Diag(s^{\lambda_{1}(H)},
\ldots, s^{\lambda_{r}(H)}) \  \text{if} \ Hv_i=\lambda_i(H)v_i,\\
x_a(t)=\sum_{n\geq 0}{t^n \frac{\phi(x_{a}^n)}{n!}}.
\end{gathered}
\end{equation*} 
Observe that the same procedure can be applied over any field $k$ over $V(\mathbb{Z})\otimes k$.
\begin{remark}\normalfont
\begin{enumerate}
\item If $\g=\g_s$ we have that $G_{V}=\ip{x_a(t)}{a\in R}$.
\item We can think of the construction of the Chevalley group as a way of going from Lie algebras to algebraic groups, by considering the subgroup generated by the exponential of nilpotent elements. 
Assume that the Lie algebra $\g$ is associated with a complex Lie group $\SO_{2n+1}(\C)$ or $\Sl_{n}(\C)$, and we construct the Chevalley group using the defining representation over a field $k$. In that case, we obtain a subgroup of $\SO_{2n+1}(k)$ or $\Sl_{n}(k)$ respectively, see \cite{ree1957some}. 
\end{enumerate}
\end{remark}

\subsection{Iwahori--Hecke algebras}\label{iwa}

In this subsection, we 
introduce the \textit{Iwahori--Hecke} algebra, which is defined from a Chevalley group. We 
now recall  
the presentations of 
Weyl groups associated with the classical types $A,B,C,D$. Recall that these groups arise as the groups generated by the simple symmetries of the classical root systems, and also that they are Coxeter groups. 

\medskip
{\bf Type A:}
The Weyl group of type $A_{n}$ is the symmetric group $\S_{n}$. The simple symmetries are the transpositions $s_{1},
\ldots, s_{n-1}$, where $s_{i}$ is the transposition $(i,i+1)$. The Weyl group has the following presentation:
\[
s_{i}s_{j}=s_{j}s_{i} \ \textup{if} \ \size{i-j}>1,\quad s_{i}s_{i+1}s_{i}=s_{i+1}s_{i}s_{i+1}, \quad s_{i}^{2}=1.
\]

\medskip
{\bf Types B and C:} The Weyl groups of types $B_{n}$ and $C_{n}$ coincide, and this group is the subgroup of $n\times n$ matrices that contain exactly one non-zero entry in each row and column, with these entries being $\pm 1$. This group is denoted by $W_{n}$, and we have that $W_{n}=N_{n}\rtimes \S^{n}$, where $N_{n}\cong \{\pm 1\}^{n}$ is the subgroup of diagonal matrices of $W_{n}$. Let $t\in W_{n}$ be the element $\Diag(-1,1,1,\ldots,1)$. The simple symmetries are $t,s_{1},\ldots ,s_{n-1}$, and the group $W_{n}$ has a presentation given by the relations
\begin{align*}
s_{1}ts_{1}t=s_{1}&ts_{1}t,\quad ts_{i}=s_{i}t \ \text{if} \ i\neq 1,\quad s_{i}s_{j}=s_{j}s_{i} \ \text{if} \ \size{i-j}>1,\\
&s_{i}s_{i+1}s_{i}=s_{i+1}s_{i}s_{i+1},\quad t^{2}=1, \quad s_{i}^{2}=1.
\end{align*}

\medskip
{\bf Type D:}
From the relations defining $W_{n}$ we can define a morphism $\epsilon:W_{n}\rightarrow \{\pm 1\}$ such that $\epsilon(t)=-1$ and $\epsilon(s_{i})=1$ for all $i$. The Weyl group of type $D_{n}$, which we denote by $W_{n}'$, is the kernel of $\epsilon$. The group $W_{n}'$ is a subgroup of index two, and an element $w\in W_{n}$ belongs to $W_{n}'$ if and only if it has an even number of $-1$. Defining $N_{n}'=N_{n}\cap W_{n}'$, we have that $W_{n}'=N_{n}'\rtimes \S^{n}$. Let $u=ts_{1}t$, then the simple symmetries of $W_{n}\rq$ are $u,s_{1},\ldots,s_{n-1}$, and the group has a presentation given by the following relations:
\begin{align*}&us_{1}=s_{1}u,\quad us_{2}u=s_{2}us_{2}, \quad \text{$us_{i}=s_{i}u$ if $i>2$},\\ 
 s_{i}s_{j}=s_{j}s_{i} &\ \ \text{if} \ \size{i-j}>1,
\quad s_{i}s_{i+1}s_{i}=s_{i+1}s_{i}s_{i+1},\quad u^{2}=1, \quad s_{i}^{2}=1.
\end{align*}

\medskip

Let $G$ be a Chevalley group constructed as in Subsection~\ref{cheva} for a prime power $q$, and let $B$ be a Borel subgroup of $G$. We now introduce the \textit{Iwahori--Hecke algebra}. 

\begin{definition}\normalfont
The \textit{Iwahori--Hecke algebra} associated with the Chevalley group $G$ is defined as $\mathcal{H}(G,B)$.
\end{definition}

The Bruhat decomposition for $G$, with respect to $B$, enables us to think of the Weyl group $W$ as a set of double $B$-coset representatives in $G$, and therefore $\{T_{w}=\one_{HwH}:w\in W\}$ gives a basis of
$\mathcal{H}(G, B)$, as we saw in Equation~\eqref{base Hecke}. The result given in  \textup{\cite[Theorem~3.2]{iwahori1964stucture}} gives us the following characterization
of $\mathcal{H}(G,B)$ in terms of generators and relations. For these relations, we denote the simple symmetries by $\Delta$.
\begin{enumerate}[a)]
\item If $w=s_{1}\cdots s_{k}$ is a reduced expression of $w\in W$, then $T_{w}=T_{s_{1}}\cdots T_{s_{k}}$. In particular, $\mathcal{H}(G,B)$ is generated by $\{T_{s}:s\in \Delta\}$.
\item For $s\in \Delta$, $T_{s}^{2}=q+(q-1)T_{s}$.
\end{enumerate}

\subsection{Yokonuma--Hecke algebras}\label{yoko}

Let $G$ be a Chevalley group constructed as in Subsection~\ref{cheva}, and let $B=U\rtimes T$ be a Borel subgroup, where $T$ and $U$ are its maximal torus and unipotent radical, respectively. 
Let $N$ be the normalizer of $T$ in $G$, we have that $N/T\cong W$. For an even number $d$ we 
define the group $N_{d,n}$, which has a presentation in terms of generators $\{t_{j},\xi_{i}: j\in [n],i\in [l]\}$, where $\Delta=\{a_{1},\ldots,a_{l}\}$ is the set of simple roots, $n$ is the rank of the root system associated with $W$, and the following relations.

\noindent
\begin{minipage}[t]{0.32\textwidth}
\begin{enumerate}[label=(n\arabic*), ref=(n\arabic*)]
    \item $t_j t_k = t_k t_j$. 
    \item $t_j^d = 1$. 
\end{enumerate}
\end{minipage}%
\hfill
\begin{minipage}[t]{0.32\textwidth}
\begin{enumerate}[label=(n\arabic*), ref=(n\arabic*)]
    \setcounter{enumi}{2}
    \item $\xi_i t_j = t_{s_{i}(H_j)} \xi_i$.
    \item $\xi_i^2 = t_{H_{a_i}}^{d/2}$.      
\end{enumerate}
\end{minipage}
\hfill
\begin{minipage}[t]{0.32\textwidth}
\begin{enumerate}[label=(n\arabic*), ref=(n\arabic*)]
    \setcounter{enumi}{4}
    \item $\underbrace{\xi_i \xi_j \cdots}_{\text{$m_{ij}$}} = \underbrace{\xi_j \xi_i \cdots}_{\text{$m_{ij}$}}$,  
\end{enumerate}
\end{minipage}

\medskip
\noindent{}
where $m_{ij}$ is the order of $s_{i}s_{j}$ in $W$, and we are using the following notation.

\begin{notation}\label{not t}
The elements $H_{a_{i}}\in \h_{\Z}$ and $s_{i}\in W$ denote the distinguished element and the simple symmetry associated with $a_{i}\in \Delta$, respectively. We also denote by $t_{a_{1}H_{1}+\cdots +a_{n}H_{n}}$ the element $t_{1}^{a_{1}}\cdots t_{n}^{a_{n}}$.
\end{notation}

For the exceptional case of the Chevalley group constructed from the semisimple Lie algebra of $\Sl_{n}$, we define $N_{d,n}$ with the same relations except for the relations corresponding to the maximal torus, relations~\rel{n1} and~\rel{n2}, which we define to be the diagonal matrices of determinant one over the ring $C_{d}$ (the set of $d$-th roots of unity). Using the Chevalley group relations from \textup{\cite[§~3.3.1]{thiem2004unipotent}}, we have the following lemma.

\begin{lemma}\label{iso n}
For a 
Chevalley group $G$ constructed for a prime power $q$, we have an isomorphism $N_{q-1,n}\cong N$.
\end{lemma}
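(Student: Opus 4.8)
The plan is to establish the isomorphism $N_{q-1,n}\cong N$ by exhibiting an explicit map on generators and checking it respects the defining relations, then verifying bijectivity. First I would recall that $N$ is generated by the torus $T$ together with representatives $n_{s_i}$ of the simple reflections $s_i\in W\cong N/T$; the standard choice (following \cite{steinberg1967lectures} and \cite[§~3.3.1]{thiem2004unipotent}) is $n_{a_i}=x_{a_i}(1)x_{-a_i}(-1)x_{a_i}(1)$, and the torus elements $h_{H}(s)$ for $H\in\h_{\Z}$, $s\in\FF_q^{\times}$. Since $\FF_q^{\times}\cong C_{q-1}$ is cyclic of even order $q-1$ (recall $q$ is odd), I would fix a generator $\zeta$ of $\FF_q^{\times}$ and define the candidate homomorphism $\Phi:N_{q-1,n}\to N$ by sending $t_j\mapsto h_{H_j}(\zeta)$ and $\xi_i\mapsto n_{a_i}$.

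The key step is then to verify that the images satisfy each of the relations \rel{n1}--\rel{n5}. Relations \rel{n1} and \rel{n2} hold because the $h_{H_j}(\zeta)$ lie in the abelian torus $T$ and $\zeta$ has order $q-1=d$. For \rel{n3}, I would use the Chevalley commutation formula describing how $n_{a_i}$ conjugates a torus element: $n_{a_i}h_{H}(s)n_{a_i}^{-1}=h_{s_i(H)}(s)$, which matches $\xi_i t_j=t_{s_i(H_j)}\xi_i$ after translating $H_j\mapsto s_i(H_j)$ via Notation~\ref{not t}. Relation \rel{n4}, $\xi_i^2=t_{H_{a_i}}^{d/2}$, follows from the identity $n_{a_i}^2=h_{H_{a_i}}(-1)$ in the Chevalley group, together with the observation that $-1=\zeta^{(q-1)/2}$, so $h_{H_{a_i}}(-1)=h_{H_{a_i}}(\zeta)^{d/2}=t_{H_{a_i}}^{d/2}$. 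The braid relations \rel{n5} hold because the $n_{a_i}$ satisfy the braid relations of $W$ (the defining feature of Tits' section of the Weyl group). The exceptional $\Sl_n$ case is handled separately, where the torus is replaced by determinant-one diagonal matrices over $C_d$, and one checks the same relation-by-relation matching with the torus generators adjusted accordingly.

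Finally, to conclude that $\Phi$ is an isomorphism rather than merely a surjection, I would count: $\Phi$ is surjective because its image contains a full set of torus generators and representatives of all simple reflections, which generate $N$. For injectivity, I would argue that both groups fit into compatible short exact sequences $1\to T_{d}\to N_{d,n}\to W\to 1$ and $1\to T\to N\to W\to 1$, where $T_d$ is the abelian normal subgroup generated by the $t_j$ (isomorphic to the torus $T$ over $C_d=\FF_q^{\times}$ by \rel{n1}--\rel{n2}); $\Phi$ restricts to an isomorphism $T_d\xrightarrow{\sim}T$ and induces the identity on the quotient $W$, so the five lemma (or a direct order comparison, since all groups are finite) forces $\Phi$ to be bijective.

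I expect the main obstacle to be relation \rel{n4}, namely pinning down the sign and the precise torus element in $n_{a_i}^2=h_{H_{a_i}}(-1)$ and matching it cleanly with $t_{H_{a_i}}^{d/2}$ under the identification $-1\leftrightarrow\zeta^{d/2}$; this is where the oddness of the characteristic is genuinely used, since it guarantees $-1\neq 1$ and that $d/2$ is a well-defined integer. A secondary subtlety is keeping the lattice conventions of Notation~\ref{not t} consistent, so that the abelian-group structure presented by \rel{n1}--\rel{n2} genuinely matches the torus $T\cong(\FF_q^{\times})^{?}$ cut out inside $\Gl_r(V_q)$ by the distinguished coweights $H_j$, including in the determinant-one $\Sl_n$ case.
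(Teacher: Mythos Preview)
Your proposal is correct and follows precisely the approach the paper intends: the paper's entire proof is the single sentence ``Using the Chevalley group relations from \cite[§~3.3.1]{thiem2004unipotent}, we have the following lemma,'' and your argument is exactly the unpacking of that citation---matching the Steinberg generators $n_{a_i}$, $h_{H_j}(\zeta)$ of $N$ against the abstract generators $\xi_i$, $t_j$ via the standard identities $n_{a_i}h_H(s)n_{a_i}^{-1}=h_{s_i(H)}(s)$, $n_{a_i}^2=h_{H_{a_i}}(-1)$, and the braid relations. Your five-lemma/order-count for bijectivity is the natural way to finish, and your identification of \rel{n4} (with the role of $-1=\zeta^{(q-1)/2}$ and odd characteristic) as the crux is on point.
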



\begin{definition}\normalfont
The \textit{Yokonuma--Hecke algebra} associated with a Chevalley group $G$ is defined as $\mathcal{H}(G,U)$.

\end{definition}

Using the relations given in \textup{\cite[Subsection~3.1.4]{thiem2004unipotent}}, we have that the Yokonuma--Hecke algebra constructed for any of the classical root systems has a presentation in terms of generators $\{t_{j},\xi_{i}: j\in [n],i\in [l]\}$, where $\Delta=\{a_{1},\ldots,a_{l}\}$ is the set of simple roots, $n$ denotes the rank of the root system and $q$ is the prime power with which we constructed $G$, and the following relations. 

\noindent
\begin{minipage}[t]{0.32\textwidth}
\begin{enumerate}[label=(y\arabic*), ref=(y\arabic*)]
    \item $t_j t_k = t_k t_j$. 
    \item $t_j^{q-1} = 1$.
\end{enumerate}
\end{minipage}%
\hfill
\begin{minipage}[t]{0.32\textwidth}
\begin{enumerate}[label=(y\arabic*), ref=(y\arabic*)]
    \setcounter{enumi}{2}
    \item $\xi_i t_j = t_{s_{i}(H_j)} \xi_i$.
    \item $\xi_{i}^2=q^{-1}t_{H_{a_{i}}}^{q-1/2}+q^{-1}e_{i}\xi_{i}$.     
\end{enumerate}
\end{minipage}
\hfill
\begin{minipage}[t]{0.32\textwidth}
\begin{enumerate}[label=(y\arabic*), ref=(y\arabic*)]
    \setcounter{enumi}{4}
    \item $\underbrace{\xi_i \xi_j \cdots}_{\text{$m_{ij}$}} = \underbrace{\xi_j \xi_i \cdots}_{\text{$m_{ij}$}}$,  
\end{enumerate}
\end{minipage}

\medskip

\noindent{}
where $m_{ij}$ is the order of $s_{i}s_{j}$ in $W$, and we are using Notation~\ref{not t}, and 
\[ e_{i}=\frac{1}{q-1}\sum_{j=1}^{q-1}t_{H_{a_{i}}}^{j}.\]

For the exceptional case of the Chevalley group constructed from the semisimple Lie algebra of $\Sl_{n}$ we obtain the same relations except for the relations of the maximal torus, which is formed by the diagonal matrices of determinant one over the ring $\FF_{q}^{\times}$.

\section{Generic deformations using Tits deformation theorem}\label{sec 3}

In this section, we 
construct generic deformations of the Iwahori--Hecke algebra and of the Yokonuma--Hecke algebra. Then, using 
Tits deformation theorem \textup{\cite[Theorem~7.4.6]{geck2000characters}}, we 
obtain bijections between $\Irr(\mathcal{H}(G,B))$ and $\Irr(W)$, and 
between $\Irr(\mathcal{H}(G,U))$ and $\Irr(N)$. We 
also give a generic deformation of $\C[N_{d,n}]$, and an explicit construction of the Chevalley group for each 
classical type, together with some of its properties.

\subsection{A 
deformation of the Iwahori--Hecke algebra}

Let $W$ be the Weyl group of one of the classical root systems associated with a Chevalley group $G$. If $u$ is an indeterminate over $\C$ we define the $\C[u^{\pm 1}]$-algebra $H_{n}$, where $n$ is the rank of the root system associated with $W$, generated by the elements $\{T_{w}:w\in W\}$ subject to the relations:
\begin{enumerate}[a)]
\item If $w=s_{1}\cdots s_{k}$ is a reduced expression of $w\in W$, then $T_{w}=T_{s_{1}}\cdots T_{s_{k}}$. In particular, $H_{n}$ is generated by $\{T_{s}:s\in \Delta\}$.
\item For $s\in \Delta$, $T_{s}^{2}=u+(u-1)T_{s}$.
\end{enumerate}  
$H_{n}$ is called the \textit{generic Iwahori--Hecke algebra} associated with $W$ with parameter $u$. Setting $u=1$ we obtain the relations of $\C[W]$, while when setting $u=q$ we obtain the relations of $\mathcal{H}(G,B)$, by using \textup{\cite[Theorem~3.2]{iwahori1964stucture}}. Then, we have
\[H_{n}(1)\cong \C[W],\quad \text{and} \quad   H_{n}(q)\cong \mathcal{H}(G,B).
\]
We consider the specializations at $u=1$ and $u=q$ and we call them $\sigma_{1}$ and $\sigma_{q}:\C[u^{\pm 1}]\rightarrow \C$. Using \textup{\cite[Theorem~2.9]{benson1972degrees}}, we have that $H_{n}(u)$ is split and semisimple, then applying Tits deformation theorem \textup{\cite[Theorem~7.4.6]{geck2000characters}}, we obtain bijections
\[d_{\sigma_{q}}:\Irr(H_{n}(u))\rightarrow \Irr(H_{n}(q))=\Irr(\mathcal{H}(G,B)), \quad \text{and} \quad d_{\sigma_{1}}:\Irr(H_{n}(u))\rightarrow \Irr(H_{n}(1))=\Irr(W).\]
Since $\C[u^{\pm 1}]$ is integrally closed in $\C(u)$, we obtain that the values of the characters of $H_{n}$ belong to $\C[u^{\pm 1}]$, by using \textup{\cite[Proposition~7.3.8]{geck2000characters}}. Moreover, if $\chi: H_{n}\rightarrow \C[u^{\pm 1}]$ is a character, we obtain the specialized characters 
\[\chi_{1}=d_{\sigma_{1}}(\chi)=\sigma_{1}(\chi), \quad \text{and} \quad  \chi_{q}=d_{\sigma_{q}}(\chi)=\sigma_{q}(\chi).\] 
Thus, we can define a bijection 
\[T_{B}=d_{\sigma_{q}}\circ d_{\sigma_{1}}^{-1}:\Irr(W)\rightarrow \Irr(\mathcal{H}(G,B)).\]
In particular 
$\Irr(\mathcal{H}(G,B))$ is parametrized by 
$\Irr(W)$. Using Proposition~\ref{D operador}, we have the following diagram of bijections 
\begin{center} \begin{tikzcd}
\Irr(G:B)\arrow[swap]{dr}{D_{B}} & & \Irr(W)\arrow{dl}{T_{B}}\\
& \Irr(\mathcal{H}(G,B)) &
\end{tikzcd}\end{center}

\subsection{A 
deformation of the Yokonuma--Hecke algebra}\label{gen yoko}

Let $G$ be a Chevalley group associated with one of the classical root systems. If $u$ is an indeterminate over $\C$, for $d$ even we define the $\C[u^{\pm 1}]$-algebra $Y_{d,n}$ generated by the elements $\{t_{j},\xi_{i}: j\in [n],i\in [l]\}$, where $\Delta=\{a_{1},\ldots,a_{l}\}$ is the set of simple roots, $n$ denotes the rank of the root system, subject to the relations:

\noindent
\begin{minipage}[t]{0.32\textwidth}
\begin{enumerate}[label=(Y\arabic*), ref=(Y\arabic*)]
    \item $t_j t_k = t_k t_j$. 
    \item $t_j^{d}=1$. 
\end{enumerate}
\end{minipage}%
\hfill
\begin{minipage}[t]{0.32\textwidth}
\begin{enumerate}[label=(Y\arabic*), ref=(Y\arabic*)]
    \setcounter{enumi}{2}
    \item $\xi_i t_j = t_{s_{i}(H_j)} \xi_i$.
    \item $\xi_{i}^2=ut_{H_{a_{i}}}^{d/2}+(1-u)e_{i}\xi_{i}$.     
\end{enumerate}
\end{minipage}
\hfill
\begin{minipage}[t]{0.32\textwidth}
\begin{enumerate}[label=(Y\arabic*), ref=(Y\arabic*)]
    \setcounter{enumi}{4}
    \item $\underbrace{\xi_i \xi_j \cdots}_{\text{$m_{ij}$}} = \underbrace{\xi_j \xi_i \cdots}_{\text{$m_{ij}$}}$.  
\end{enumerate}
\end{minipage}

\medskip

\noindent{} 
where $m_{ij}$ is the order of $s_{i}s_{j}$ in $W$, we are using Notation~\ref{not t}, and 
\[ e_{i}=\frac{1}{d}\sum_{j=1}^{d}t_{H_{a_{i}}}^{j}.\]
For the exceptional case of the Chevalley group constructed from the semisimple Lie algebra of $\Sl_{n}$ we take the same relations except for the relations of the maximal torus, relations~\rel{Y1} and~\rel{Y2}, which is changed by the diagonal matrices of determinant one over the ring $C_{d}$. 

The ring $Y_{d,n}$ is the \textit{generic Yokonuma--Hecke algebra} associated to $G$ with parameter $u$. From what we saw in Subsection~\ref{yoko}, we have that by setting $u=1$ in $Y_{d,n}$ we obtain the relations of $\C[N_{d,n}]$,  while when setting $u=q^{-1}$ in $Y_{q-1,n}$ we obtain the relations of $\mathcal{H}(G,U)$. Then, we have 
\[Y_{d,n}(1)\cong \C[N_{d,n}],\quad \text{and} \quad  Y_{q-1,n}(q^{-1})\cong \mathcal{H}(G,U).
\]
We consider the specializations at $u=1$ and $u=q^{-1}$ and we call them $\phi_{1}$ and $\phi_{q}:\C[u^{\pm 1}]\rightarrow \C$, respectively. Following the argument of 
\textup{\cite[Theorem~4.4.6]{geck2000characters}} we can deduce that $Y_{d,n}$ is a free algebra over $\C[u^{\pm 1}]$. We also have that $\C[N_{d,n}]$ is split semisimple, and by \textup{\cite[Corollary~3.5.3]{hausel2019arithmetic}} the algebra $\mathcal{H}(G,U)$ is semisimple. For the cases in which $Y_{d,n}(u)$ is split (we will see it later for 
types $A$ and $B$), using \textup{\cite[Proposition~7.3.8]{geck2000characters}} we obtain that the values of the characters of $Y_{d,n}$ belong to $\C[u^{\pm 1}]$. If it is not split over $\C(u)$ we consider $\mathbb{K}$ a finite Galois extension of  $\C(u)$  such that $\mathbb{K}Y_{d,n}$ is split, and by applying Tits deformation theorem, \textup{\cite[Theorem~7.4.6]{geck2000characters}}, we obtain the following bijections:
\begin{align*}
 d_{\phi_q}:&\Irr(\mathbb{K}Y_{q-1,n})\xrightarrow{\sim} \Irr(\mathbb{K}Y_{q-1,n}(q^{-1}))=\Irr(\mathcal{H}(G,U)),\\ 
 d_{\phi_1}:&\Irr(\mathbb{K}Y_{d,n})\xrightarrow{\sim} \Irr(\mathbb{K}Y_{d,n}(1))=\Irr(N_{d,n}).
\end{align*}
We call $\phi_q,\phi_1:A^{\ast} \rightarrow \C$ the specializations of $u$ at $q^{-1}$ and $1$ respectively, extended to the integral closure $A^{\ast}$ of $A=\C[u^{\pm 1}]$ in $\mathbb{K}$. Moreover, if $\chi: \mathbb{K}Y_{d,n}\rightarrow A^{\ast}$ is a character, we obtain the specialized characters
\begin{center}
$d_{\phi_q}(\chi)=\phi_q\circ \chi$,\quad \text{and} \quad  \ $d_{\phi_1}(\chi)=\phi_1 \circ \chi$.
\end{center}
In particular for $d=q-1$, using Lemma~\ref{iso n}, we can define a bijection
\begin{equation}T_{U}=d_{\phi_{q}}\circ d_{\phi_{1}}^{-1}:\Irr(N)\rightarrow \Irr(\mathcal{H}(G,U)).\label{bij T2}
\end{equation}
In particular 
$\Irr(\mathcal{H}(G,U))$ is parametrized by $\Irr(N)$.
\begin{center} \begin{tikzcd}
\Irr(G:U)\arrow[swap]{dr}{D_{U}} & & \Irr(N)\arrow{dl}{T_{U}}\\
& \Irr(\mathcal{H}(G,U)) &
\end{tikzcd}\end{center}

\subsection{A 
deformation of $\C[N_{d,n}]$}\label{gen nor}

The goal is to compute 
$\Irr(N_{d,n})$, that for 
$d=q-1$ is $\Irr(N)$, which is in bijection with $\Irr(\H(G,U))$ by Equation~\eqref{bij T2}. But looking at the presentation of $N_{d,n}$ from Subsection~\ref{yoko}, we note that it is not a semidirect product because of the relation~\rel{n4}. We would have a semidirect product if that relation were $\xi_{i}^{2}=1$. 
We introduce the group $S_{d,n}$, whose group algebra is a deformation of that of $N_{d,n}$.

\medskip
Let $G$ be a Chevalley group associated with one of the classical root systems. We define the group $S_{d,n}$ generated by the elements $\{t_{j},\xi_{i}: j\in [n],i\in [l]\}$, where $\Delta=\{a_{1},\ldots,a_{l}\}$ is the set of simple roots, $n$ is the rank of the associated root system, subject to the relations:

\noindent
\begin{minipage}[t]{0.32\textwidth}
\begin{enumerate}[label=(S\arabic*), ref=(S\arabic*)]
    \item $t_j t_k = t_k t_j$. 
    \item $t_j^{d} = 1$. 
\end{enumerate}
\end{minipage}%
\hfill
\begin{minipage}[t]{0.32\textwidth}
\begin{enumerate}[label=(S\arabic*), ref=(S\arabic*)]
    \setcounter{enumi}{2}
    \item $\xi_i t_j = t_{s_{i}(H_j)} \xi_i$.   
    \item $\xi_{i}^2=1$.    
\end{enumerate}
\end{minipage}
\hfill
\begin{minipage}[t]{0.32\textwidth}
\begin{enumerate}[label=(S\arabic*), ref=(S\arabic*)]
    \setcounter{enumi}{4}
    \item $\underbrace{\xi_i \xi_j \cdots}_{\text{$m_{ij}$}} = \underbrace{\xi_j \xi_i \cdots}_{\text{$m_{ij}$}}$,    
\end{enumerate}
\end{minipage}

\medskip
\noindent
where $m_{ij}$ is the order of $s_{i}s_{j}$ in $W$, and we are using Notation~\ref{not t}. For the exceptional case of the Chevalley group constructed from the semisimple Lie algebra of $\Sl_{n}$ we set the same relations except for relations~\rel{S1} and~\rel{S2}, which are changed by the relations of the group of diagonal matrices of determinant one over the ring $C_{d}$. 
To obtain a bijection between $\Irr(N_{d,n})$ and $\Irr(S_{d,n})$, we will define a generic deformation of $\C[N_{d,n}]$, that we will call $Z_{d,n}$.

Let $G$ be a Chevalley group associated with one of the classical root systems. If $u$ is an indeterminate over $\C$ we define the $\C[u^{\pm 1}]$-algebra $Z_{d,n}$ generated by the elements $\{t_{j},\xi_{i}: j\in [n],i\in [l]\}$, where $\Delta=\{a_{1},\ldots,a_{l}\}$ is the set of simple roots,  $n$ is the rank of the associated root system, subject to the relations:

\noindent
\begin{minipage}[t]{0.32\textwidth}
\begin{enumerate}[label=(Z\arabic*), ref=(Z\arabic*)]
    \item $t_j t_k = t_k t_j$. 
    \item $t_j^{d} = 1$. 
\end{enumerate}
\end{minipage}%
\hfill
\begin{minipage}[t]{0.32\textwidth}
\begin{enumerate}[label=(Z\arabic*), ref=(Z\arabic*)]
    \setcounter{enumi}{2}
    \item $\xi_i t_j = t_{s_{i}(H_j)} \xi_i$.   
    \item $\xi_{i}^2=ut_{H_{a_{i}}}^{d/2}+(1-u)$.
\end{enumerate}
\end{minipage}
\hfill
\begin{minipage}[t]{0.32\textwidth}
\begin{enumerate}[label=(Z\arabic*), ref=(Z\arabic*)]
    \setcounter{enumi}{4}
    \item $\underbrace{\xi_i \xi_j \cdots}_{\text{$m_{ij}$}} = \underbrace{\xi_j \xi_i \cdots}_{\text{$m_{ij}$}}$,    
\end{enumerate}
\end{minipage}
\medskip

\noindent
where $m_{ij}$ is the order of $s_{i}s_{j}$ in $W$,
and we are using Notation~\ref{not t}. For the exceptional case of the Chevalley group constructed from the semisimple Lie algebra of $\Sl_{n}$ we take the same relations except for relations~\rel{Z1} and~\rel{Z2}, which are changed by the relations of the group of diagonal matrices of determinant one over the ring $C_{d}$. From now on, we 
adopt the following notation.
\begin{notation}\label{not hi}
For the algebras $Y_{d,n},Z_{d,n},\C[N_{d,n}],\H(G,U)$ and $\C[S_{d,n}]$, we 
denote the element $t_{H_{a_{i}}}^{d/2}$ by $h_{i}(-1)$. Observe that $h_{i}(-1)^{2}=1$, since $t_{H_{a_{i}}}$ has order $d$.
\end{notation}

Setting $u=1$ in $Z_{d,n}$, we obtain the relations of $\C[N_{d,n}]$, while when setting $u=0$ we obtain the relations of $\C[S_{d,n}]$. Then, we have
\[Z_{d,n}(1)\cong \C[N_{d,n}],\quad \text{and} \quad   Z_{d,n}(0)\cong \C[S_{d,n}].
\]
We consider the specializations at $u=1$ and $u=0$, and we call them $\tau_{1},\tau_{0}:\C[u^{\pm 1}]\rightarrow \C$. Using the same argument as above with $\mathbb{K}$ a finite Galois extension of $\C(u)$, and applying Tits deformation theorem \textup{\cite[Theorem~7.4.6]{geck2000characters}}, we obtain bijections
\begin{align*}
& d_{\tau_0}:\Irr(\mathbb{K}Z_{d,n})\xrightarrow{\sim} \Irr(\mathbb{K}Z_{d,n}(0))=\Irr(S_{d,n}),\\ & d_{\tau_1}:\Irr(\mathbb{K}Z_{d,n})\xrightarrow{\sim} \Irr(\mathbb{K}Z_{d,n}(1))=\Irr(N_{d,n}).
\end{align*}
We call $\tau_1,\tau_0:A^{\ast} \rightarrow \C$ the specializations of $u$ at $1$ and $0$ respectively, extended to the integral closure $A^{\ast}$ of $A=\C[u^{\pm 1}]$ in $\mathbb{K}$. Moreover, if $\chi: \mathbb{K}Z_{d,n}\rightarrow A^{\ast}$ is a character, we obtain the specialized characters
\begin{center}
$d_{\tau_0}(\chi)=\tau_0\circ \chi$,\quad \text{and} \quad   $d_{\tau_1}(\chi)=\tau_1 \circ \chi$.
\end{center}
In particular, we can define a bijection
\begin{equation}\label{bij T}
T=d_{\tau_{0}}\circ d_{\tau_{1}}^{-1}:\Irr(N_{d,n})\rightarrow \Irr(S_{d,n}).
\end{equation}
In particular, 
$\Irr(N_{d,n})$ is also parametrized by $\Irr(S_{d,n})$. The following diagram shows the situation:
\begin{equation}
\begin{tikzcd}
 \Irr\pare{\mathbb{K}Y_{d,n}} \arrow{dr}{d_{\phi_{1}}} & & \Irr\pare{\mathbb{K} Z_{d,n}}\arrow{dr}{d_{\tau_{0}}} \arrow{dl}[swap]{d_{\tau_{1}}} &\\
  & \Irr\pare{N_{d,n}} & & \Irr\pare{S_{d,n}}
\end{tikzcd}\label{diagrama}\end{equation}

And if $d=q-1$, we have 

\begin{equation}
\begin{tikzcd}
& \Irr\pare{\mathbb{K}Y_{q-1,n}} \arrow{dr}{d_{\phi_{1}}} \arrow{dl}[swap]{d_{\phi_{q}}} & & \Irr\pare{\mathbb{K} Z_{q-1,n}} \arrow{dr}{d_{\tau_{0}}} \arrow{dl}[swap]{d_{\tau_{1}}}& \\
\Irr\pare{\mathcal{H}(G,U)} & & \Irr\pare{N} & &  \Irr(S_{q-1,n})
\end{tikzcd} \label{diagrama 2}\end{equation}
where the arrows are bijections. In the following section, we compute $\Irr(S_{d,n})$ using the characterization of the irreducible representations of a semidirect product. 

\subsection{Basic properties of $Y_{d,n}$}\label{basic prop}

In this subsection, we 
present some basic properties of $Y_{d,n}$. Recall the notation from \ref{gen yoko}, and Notation~\ref{not hi}. It is clear that $h_{i}(-1)^{2}=1$ and that $e_{i}$ is idempotent since $t_{H_{a_{i}}}$ has order $d$. 

\begin{lemma}\label{conmute}
The idempotent $e_i$ commutes with $\xi_i$, and 
satisfies that  $e_{i}h_{i}(-1)=h_{i}(-1)e_{i}=e_{i}$.
\end{lemma}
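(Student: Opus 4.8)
The plan is to reduce everything to the abelian relations governing $\tau := t_{H_{a_i}}$. By Notation~\ref{not t}, $\tau$ is a monomial in the commuting generators $t_1,\dots,t_n$, so all its powers commute with one another, and relation (Y2) gives $\tau^d = 1$; thus $\langle \tau\rangle$ is cyclic of order dividing $d$ and $e_i = \frac{1}{d}\sum_{j=1}^d \tau^j$ is the averaging idempotent of this cyclic group. The second identity is then immediate: multiplying $e_i$ by $\tau$ merely permutes the summands cyclically (using $\tau^{d+1}=\tau$), so $e_i\tau = e_i$, and hence $e_i\tau^k = e_i$ for every $k\geq 0$. Taking $k = d/2$ gives $e_i h_i(-1) = e_i$, and since powers of $\tau$ commute this equals $h_i(-1)e_i$.

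For the commutation with $\xi_i$, the key input is relation (Y3) together with the root-system identity $s_i(H_{a_i}) = -H_{a_i}$ (a simple reflection negates its own coroot). First I would extend (Y3) from the basis elements $t_j$ to the monomial $\tau$: since $s_i$ acts linearly on $\h_{\Z}$ and the $t_j$ commute, $\xi_i t_H = t_{s_i(H)}\xi_i$ for every $H\in\h_{\Z}$, and specializing to $H = H_{a_i}$ yields $\xi_i\tau = t_{-H_{a_i}}\xi_i = \tau^{-1}\xi_i$. An easy induction then gives $\xi_i \tau^j = \tau^{-j}\xi_i$ for all $j$. Finally I would compute
\[
\xi_i e_i = \frac{1}{d}\sum_{j=1}^d \xi_i\tau^j = \frac{1}{d}\sum_{j=1}^d \tau^{-j}\xi_i = \Big(\frac{1}{d}\sum_{j=1}^d \tau^{-j}\Big)\xi_i = e_i\xi_i,
\]
where the last equality reindexes $j\mapsto d-j$ (using $\tau^{-j}=\tau^{d-j}$ and $\tau^d=1$) to recognize $\frac{1}{d}\sum_j \tau^{-j}$ as $e_i$ again.

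The computation is routine once the inputs are in place; the only non-mechanical step, and the one I would be most careful about, is the passage $\xi_i\tau = \tau^{-1}\xi_i$. Here I must justify that (Y3), stated only for the chosen basis elements $t_j$, propagates to the monomial $t_{H_{a_i}}$ by linearity of $s_i$, and I must invoke the coroot fact $s_i(H_{a_i})=-H_{a_i}$ to identify $t_{s_i(H_{a_i})}$ with $\tau^{-1}$. I would also remark that relations (Y4) and (Y5) play no role in the argument, so the same proof establishes the analogous statement verbatim in $Z_{d,n}$, $\C[N_{d,n}]$, $\mathcal{H}(G,U)$ and $\C[S_{d,n}]$, where $e_i$ and $h_i(-1)$ are given by the same formulas.
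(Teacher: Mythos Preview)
Your proof is correct and follows essentially the same approach as the paper: both arguments use relation~(Y3) together with $s_i(H_{a_i})=-H_{a_i}$ to see that conjugation by $\xi_i$ inverts $t_{H_{a_i}}$, then reindex the sum using $t_{H_{a_i}}^d=1$; the identity $e_i h_i(-1)=e_i$ is handled the same way, by noting that multiplying by a power of $t_{H_{a_i}}$ cyclically permutes the summands. Your version is slightly more explicit about extending~(Y3) from the generators $t_j$ to the monomial $t_{H_{a_i}}$, and your closing remark that~(Y4) and~(Y5) are irrelevant (so the lemma holds verbatim in $Z_{d,n}$, $\C[N_{d,n}]$, etc.) is a nice addition not stated in the paper.
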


\begin{proof}
Using relation~\rel{Y3} from \ref{gen yoko}, we have that  
\[(\underset{j}{\sum}t_{H_{a_i}}^{j})\xi_i=\xi_i (\underset{j}{\sum}t_{s_i(H_{a_i})} ^{j})=\xi_i(\underset{j}{\sum}t_{-H_{a_i}}^{j})=\xi_i(\underset{j}{\sum}t_{H_{a_i}}^{-j})=\xi_i(\underset{j}{\sum}t_{H_{a_i}}^{j}),\]
where the last equality holds because $t_{H_{a_i}}$ has order $d$. Then, $\xi_i$ and $e_i$ commute. We also have that
\[h_i(-1)  \sum_{j} t_{H_{a_i}}^{j}=t_{H_{a_i}}^{d/2} \sum_{j} t_{H_{a_i}} ^{j}= \underset{j}{\sum} t_{H_{a_i}} ^{j},\] which implies that 
\[h_{i}(-1) e_i=e_i h_{i}(-1)=e_i.\]
\end{proof}

Let $G$ be a Chevalley group with Weyl group $W$ of one of the classical types. The group $W$ has a maximal length element $w_{0}$, and its length is the number of positive roots of the associated root system. In type $A$ the element $w_{0}$ is the anti-diagonal $\textstyle \prod_{i=1}^{\floor*{n/2}}(i,n+1-i)$, for types $B$ and $C$ the element $w_{0}$ is $-\Id$, and for type $D$ the element $w_{0}$ is $-\Id$ if the rank of the root system is even and it is $\Diag(1,-1,\ldots,-1)$ otherwise. We may choose a reduced expression 
\[w_{0}=s_{i_{1}}\cdots s_{i_{k}}.\]
With the same indices, we define



\begin{equation}\label{tw0}
\begin{array}{ccc}
T_0=\xi_{i_1} 
\cdots \xi_{i_k}\in Y_{d,n},&
\sigma_{0}=\xi_{i_1}
\cdots \xi_{i_k}\in N_{d,n}, &
 T_{0}'=\xi_{i_1} 
 \cdots \xi_{i_k}\in Z_{d,n},  \\
\widetilde{w_{0}}=\xi_{i_1}
\cdots \xi_{i_k}\in S_{d,n},&
\quad T_{w_0}=\xi_{i_{1}}
\cdots \xi_{i_{k}}\in \mathcal{H}(G,U). &
\end{array}
\end{equation}

Using the braid relations of $W$, and arguing as in Matsumoto theorem \textup{\cite[Theorem~1.2.2]{geck2000characters}}, we can see that the definition of these elements is independent of the election of the reduced expression of $w_0$. The specializations of Subsection~\ref{gen yoko} show that $T_0$ corresponds with $T_{w_{0}}$ and $\sigma_{0}$ with the specializations at $q^{-1}$ and $1$, respectively.
\begin{lemma}\label{central}
The element $T_{0}^2$ is central in $Y_{d,n}$. It follows from the specialization that $T_{w_0}^2$ is central in $\mathcal{H}(G,U)$. \label{T0^{2} es central}
\end{lemma}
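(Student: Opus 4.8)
The plan is to prove that $T_{0}^{2}$ commutes with every algebra generator $t_{j}$ and $\xi_{i}$ of $Y_{d,n}$; since these generate $Y_{d,n}$, this yields centrality. The guiding idea is that $T_{0}$ intertwines the two symmetries attached to $w_{0}$: its action on the torus, and the permutation $\pi$ of the simple reflections determined by $w_{0}s_{i}w_{0}=s_{\pi(i)}$. Since $w_{0}$ is an involution, $\pi^{2}=\Id$. I would first establish a \emph{twisted} commutation of $T_{0}$ with each family of generators and then square it to cancel the twist.

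For the torus generators, writing $H\in\h_{\Z}$ in the basis $H_{1},\dots,H_{n}$ and combining \rel{Y1} with \rel{Y3} gives $\xi_{i}\,t_{H}=t_{s_{i}(H)}\,\xi_{i}$ for every $H$. Iterating this along a reduced expression $w_{0}=s_{i_{1}}\cdots s_{i_{k}}$ yields $T_{0}\,t_{H}=t_{w_{0}(H)}\,T_{0}$, and applying it twice with $w_{0}^{2}=1$ gives $T_{0}^{2}\,t_{H}=t_{w_{0}^{2}(H)}\,T_{0}^{2}=t_{H}\,T_{0}^{2}$. For the braid generators I would prove the sharper identity $T_{0}\,\xi_{i}=\xi_{\pi(i)}\,T_{0}$. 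Because $w_{0}$ is the longest element, both $s_{\pi(i)}w_{0}$ and $w_{0}s_{i}$ have length $\ell(w_{0})-1$, and $w_{0}s_{i}w_{0}=s_{\pi(i)}$ shows they are one and the same element $w'\in W$. Hence $w_{0}$ admits a reduced expression beginning with $s_{\pi(i)}$ and one ending with $s_{i}$; by the independence of the defining word established just before the statement (the Matsumoto-type argument), these produce two factorizations $T_{0}=\xi_{\pi(i)}\,T_{w'}=T_{w'}\,\xi_{i}$ with the \emph{same} middle term $T_{w'}$. Substituting the first factorization and then recognizing the second, $T_{0}\,\xi_{i}=\xi_{\pi(i)}\,T_{w'}\,\xi_{i}=\xi_{\pi(i)}\,T_{0}$. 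Squaring and using $\pi^{2}=\Id$ gives $T_{0}^{2}\,\xi_{i}=\xi_{\pi^{2}(i)}\,T_{0}^{2}=\xi_{i}\,T_{0}^{2}$, so $T_{0}^{2}$ commutes with all generators and is central.

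The delicate step, which I expect to be the main obstacle, is exactly the factorization $T_{0}=\xi_{\pi(i)}T_{w'}=T_{w'}\xi_{i}$: it must be obtained purely from the braid relations \rel{Y5} (via independence of the reduced expression) and \emph{never} from the quadratic relation \rel{Y4}. Substituting only one of the two factorizations into $T_{0}\xi_{i}$ would create a spurious factor $\xi_{i}^{2}$ and force the use of \rel{Y4}; the argument succeeds precisely because the two reduced expressions of $w_{0}$ supply two different factorizations sharing the common term $T_{w'}$. The exceptional $\Sl_{n}$ case requires no change, since relations \rel{Y3} and \rel{Y5} are unchanged there and only the torus relations differ. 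Finally, for the second assertion, the specialization $u\mapsto q^{-1}$ is a surjective $\C$-algebra homomorphism $Y_{q-1,n}\twoheadrightarrow\mathcal{H}(G,U)$ carrying $T_{0}$ to $T_{w_{0}}$, as recorded before the statement; a surjection sends central elements to central elements, so $T_{w_{0}}^{2}$ is central in $\mathcal{H}(G,U)$.
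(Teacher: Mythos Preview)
Your proof is correct and follows essentially the same approach as the paper: you establish $T_{0}\,t_{H}=t_{w_{0}(H)}T_{0}$ and $T_{0}\,\xi_{i}=\xi_{\pi(i)}T_{0}$ using only the braid relations \rel{Y5} (never \rel{Y4}) and then square, which is exactly what the paper does---the only cosmetic difference is that the paper packages the ``never use \rel{Y4}'' caveat by passing to the braid monoid $B^{+}$ (generated by the $\xi_{i}$ and $t_{j}$ subject only to \rel{Y1}, \rel{Y3}, \rel{Y5}) and then cites \cite[Lemma~4.1.9]{geck2000characters} for the $\xi_{i}$-commutation, whereas you spell that lemma out explicitly via the two factorizations $T_{0}=\xi_{\pi(i)}T_{w'}=T_{w'}\xi_{i}$.
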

\begin{proof}
Following the argument of \textup{\autocite[§ 4.1]{geck2000characters}}, we define the monoid $B^{+}$ (we call it the same as the braid monoid) generated by $\xi_i$ and $t_j$, and subject to the relations~\rel{Y1},~\rel{Y3} and~\rel{Y5} 
from \ref{gen yoko}. 
Then, we define the monoid algebra $\C[u^{\pm 1}][B^{+}]$, of which $Y_{d,n}$ is a quotient. If we take 
\[T_0=\xi_{i_1} 
\cdots \xi_{i_k}\in \C [u^{\pm 1}][B^{+}],\]
 it is enough with showing that $T_0^2$ is central in $\C[u^{\pm 1}][B^{+}]$. Following the argument of \textup{\cite[Lemma~4.1.9]{geck2000characters}}, we get that $T_{0}^{2}$ commutes with the elements $\xi_{i}$. On the other hand, we have that $t_{H} T_0^{2}=T_0^{2}t_{w_0^{2}(H)}=T_{0}^{2}t_{H}$, where we are using that $w_0^2=1$, which holds by \textup{\cite[Lemma~1.5.3]{geck2000characters}}. Therefore, the element $T_0^{2}$ is central.\end{proof}
\begin{remark}\normalfont
Making an analogous argument for $Z_{d,n}$, we obtain that  $T_{0}'^{2}$ is central.
\end{remark}

\subsection{Explicit construction of Chevalley groups}\label{explicit}

In this subsection, for each classical type, we explicitly construct the Chevalley group 
and we describe which group we obtain with this procedure. 
We follow the definitions and notations of Subsection~\ref{cheva}, and for the following constructions, we always 
consider the defining representation of $\g\subset \C^{n\times n}$, which is given by the left multiplication of $\g$ over $V=\C^{n}$. 

\medskip
{\bf Type $A$:} 
We consider the Lie algebra of type $A$, which is $\g=\{A: A\in M_{n}(\C)\}$. We take the vectors $v_{i}$ as the canonical vectors and the matrices $H_{i}$ as $E_{i,i}$, from the definition of the Chevalley group of \ref{cheva}. The obtained Chevalley group for a fixed prime power $q$ is the group $\Gl_{n}(\FF_{q})$. 

\medskip
{\bf Type $A^{\ast}$:} We 
denote by $A^{\ast}$ the Chevalley group constructed from the semisimple Lie algebra of type $A$, which is $\g=\{A\in M_{n}(\C):\tr(A)=0\}$. Again we consider the defining representation and the same election of vectors $v_{i}$ and matrices $H_{i}$ as in the previous case. The obtained Chevalley group for a fixed prime power $q$ is the group $\Sl_{n}(\FF_{q})$.

\medskip
{\bf Type $C$:} We consider the Lie algebra of type $C$, which is $\sp_{2n}(\C)=\{A\in M_{2n}(\C):\Omega A+A^{t}\Omega=0\}$, where \[\Omega=\begin{psmallmatrix}
0 & \Id_{n} \\[6pt]
-\Id_{n} & 0
\end{psmallmatrix}.\]
Following the construction of \textup{\cite[§~5]{ree1957some}}, we take the defining representation, and the matrices $H_{i}=E_{i,i}-E_{2n-i,2n-i}$. Using the results from \textup{\cite[§~5]{ree1957some}}, we obtain that the Chevalley group obtained for a fixed prime power $q$ is the group $\Sp_{2n}(\FF_{q})$.

\medskip
{\bf Type $B$:} We consider the Lie algebra of type $B$, which is \[\mathfrak{so}_{2n+1}(\C)=\{A\in M_{2n+1}(\C):JA+A^{t}J=0\},\]
where $J=\text{AntiDiag}(1,\ldots,1,2,1,\ldots,1)$, the anti-diagonal matrix with a two in the central entry and ones in the remaining entries of the anti-diagonal.
Following the construction of \textup{\cite[§~6]{ree1957some}}, we take the defining representation, 
with the matrices $H_{i}=E_{i,i}-E_{2n+1-i,2n+1-i}$. Using the results from \textup{\cite[§~6]{ree1957some}}, we obtain that the Chevalley group obtained for a fixed prime power $q$ is the commutator of the group $\SO_{2n+1}(\FF_{q})$, defined with respect to the matrix $J$, also denoted by $\SO_{2n+1}(\FF_{q})\rq$.

\medskip
{\bf Type $D$:} We consider the Lie algebra of type $D$, which is  
\[\mathfrak{so}_{2n}(\C)=\{A\in M_{2n}(\C): LA+A^{t}L=0\},\]
where $L=\text{AntiDiag}(1,\ldots,1)$, the anti-diagonal matrix with ones in the non-zero entries. Following the construction of \textup{\cite[§~7]{ree1957some}}, we take the defining representation, 
with the matrices $H_{i}=E_{i,i}-E_{2n-i,2n-i}$. Using the results from \textup{\cite[§~7]{ree1957some}}, we obtain that the Chevalley group obtained for a fixed prime power $q$ is the commutator of the group $\SO_{2n}(\FF_{q})$, defined with respect to the matrix $L$, also denoted by $\SO_{2n}(\FF_{q})\rq$.

\begin{notation}\label{not V}
Henceforth, when we refer to the Chevalley group associated with some classical type, we will be considering the Chevalley groups constructed in this subsection. For $V=A,A^{\ast},B,C$ and $D$ let $Y_{d,n}^{V},Z_{d,n}^{V},N_{d,n}^{V}$ and $S_{d,n}^{V}$ denote $Y_{d,n},Z_{d,n},N_{d,n}$ and $S_{d,n}$, respectively constructed from the Chevalley group of type $V$.  

\end{notation}

\begin{remark}\normalfont \label{h indice 2}
Let $k$ be a field such that a quadratic form over $k^{n}$ vanishes in a non-zero vector. Then, in \textup{\cite[Lemma~6.21]{higman1978classical}}, it was proven that the commutator of $\SO_{n}(k)$, defined by this quadratic form, has index two.
We have that this is the situation for the quadratic forms given by $J$ and $L$, so we can affirm that for types $B$ and $D$  the Chevalley groups have index two in $\SO_{n}(\FF_{q})$.
\end{remark}

Now we 
prove a lemma that will be useful to prove Lemma~\ref{ktu}.

\begin{lemma}\label{iso indice} Let $G_{1}$ be a subgroup of index two of a group $G$, and let $H_{1}\leq G_{1}$ and $H\leq G$ subgroups such that $H_{1}\subset H$ and $H\not\subset G_{1}$. Assume that $\dim \H(G,H)=\dim \H(G_{1},H_{1})$, and that a system of representatives of $H$-double cosets in $G$ is also a system of representatives of $H_{1}$-double cosets in $G_{1}$. In that case, we have $\H(G,H)\cong \H(G_{1},H_{1})$.
\end{lemma}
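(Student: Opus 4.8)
The plan is to show that the restriction of functions from $G$ to $G_{1}$, namely $f \mapsto \restr{f}{G_{1}}$, furnishes the desired isomorphism $\mathcal{H}(G,H) \to \mathcal{H}(G_{1},H_{1})$. Since $G_{1}$ has index two it is normal in $G$, and because $H \not\subset G_{1}$ I may fix $g \in H \setminus G_{1}$, so that $G \setminus G_{1} = G_{1}g$. As $H_{1} \subseteq H$, the restriction of an $H$-bi-invariant function is $H_{1}$-bi-invariant, so restriction indeed lands in $\mathcal{H}(G_{1},H_{1})$. The content of the proof is then to see that this map carries one basis of indicators of double cosets (Equation~\eqref{base Hecke}) onto the other, and that it respects the convolution products despite their differing normalizations.

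The first step is to establish the set-theoretic identity
\[ HwH \cap G_{1} = H_{1}wH_{1} \qquad \text{for every } w \in W, \qquad (\star)\]
where $W \subseteq G_{1}$ is the common system of representatives. The inclusion $H_{1}wH_{1} \subseteq HwH \cap G_{1}$ is immediate. Now $\{H_{1}wH_{1}\}_{w \in W}$ partitions $G_{1}$ into $\size{W}$ nonempty parts (the hypothesis $\dim \mathcal{H}(G_{1},H_{1}) = \size{W}$ is precisely what guarantees that this system is irredundant), while $\{HwH\}_{w \in W}$ partitions $G$, so $\{HwH \cap G_{1}\}_{w \in W}$ partitions $G_{1}$ into $\size{W}$ nonempty parts as well. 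Two partitions of $G_{1}$ indexed by the same set, with $H_{1}wH_{1} \subseteq HwH \cap G_{1}$ term by term, must coincide by a cardinality count, which gives $(\star)$. Applying $(\star)$ to the representative of the trivial double coset $H$ (an element of $H \cap G_{1}$) shows that the group $H \cap G_{1}$ is a single $H_{1}$-double coset; as it contains the identity, that coset is $H_{1}$, forcing $H_{1} = H \cap G_{1}$ and hence $\size{H} = 2\size{H_{1}}$.

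With $(\star)$ in hand, restriction sends the basis element $\phi_{w} = \one_{HwH}$ to $\one_{H_{1}wH_{1}}$, the corresponding basis element of $\mathcal{H}(G_{1},H_{1})$, so it is a linear bijection. For multiplicativity I would take $f_{1},f_{2} \in \mathcal{H}(G,H)$ and $w \in G_{1}$, and split the convolution sum $\sum_{x \in G}$ into the parts $x \in G_{1}$ and $x \in G_{1}g$. Reparametrizing the second part by $x = yg$ and using that $f_{1}$ is right-$H$-invariant and $f_{2}$ is left-$H$-invariant (with $g \in H$), the two halves become equal, so $(f_{1} \ast f_{2})(w) = \tfrac{2}{\size{H}}\sum_{x \in G_{1}} f_{1}(x)f_{2}(x^{-1}w)$. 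Since $\size{H} = 2\size{H_{1}}$, this equals $\tfrac{1}{\size{H_{1}}}\sum_{x \in G_{1}} f_{1}(x)f_{2}(x^{-1}w) = (\restr{f_{1}}{G_{1}} \ast \restr{f_{2}}{G_{1}})(w)$. Hence restriction is an algebra homomorphism, and being a linear bijection, an isomorphism.

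The conceptual crux is the identity $(\star)$: the two hypotheses (a common system of representatives together with the dimension equality) are exactly what forces each $H$-double coset to meet $G_{1}$ in a single $H_{1}$-double coset, and this in turn pins down $\size{H} = 2\size{H_{1}}$. The main technical point I expect to be delicate is the normalization check, where the factor of two produced by the index-two splitting of the convolution must cancel precisely against $\size{H} = 2\size{H_{1}}$; getting the bookkeeping of left/right $H$-invariance right there is where care is needed.
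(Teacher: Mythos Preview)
Your proof is correct and follows essentially the same approach as the paper: both use the restriction map $f \mapsto \restr{f}{G_{1}}$, show it carries the double-coset basis to the double-coset basis, and verify multiplicativity by splitting the convolution sum over $G = G_{1} \amalg G_{1}g$. The paper reaches the key identity in the equivalent form $HwH = H_{1}wH_{1} \amalg H_{1}wH_{1}g$ rather than your $(\star)$, but the argument is the same partition comparison.

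One small point where your write-up is actually cleaner than the paper's: you explicitly deduce $H_{1} = H \cap G_{1}$ and hence $\size{H} = 2\size{H_{1}}$, so the restriction map is an algebra isomorphism on the nose. The paper instead ends by saying ``$\sigma$ respects the convolution if we rescale by a factor of $\sqrt{2\size{H_{1}}/\size{H}}$'' without observing that this factor is $1$; your version makes that cancellation explicit.
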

\begin{proof}
We denote by $W$ the set of common representatives of both double cosets and let  $g\in H\backslash G_{1}$. Let $HwH$ be a double $H$-coset. We know that $H_{1}wH_{1}\amalg H_{1}wH_{1}g\subset HwH$, the union is disjoint since $H_{1}wH_{1}\subset G_{1}$ and $g\notin G_{1}$. Then, we have 
\[G_{1}\amalg G_{1}g=\coprod_{w\in W} H_{1}wH_{1}\amalg H_{1}wH_{1}g \subset \coprod_{w\in W}HwH=G.\]
Since $\pare{G:G_{1}}=2$, we have that the last inclusion is an equality, and in particular we have that 
\begin{equation} HwH=H_{1}wH_{1}\amalg H_{1}wH_{1}g.\label{Tw}\end{equation}
Let us call $\phi_{w}$ and $T_{w}$ the indicator functions of the cosets of $w$ in $G_{1}$ and $G$ respectively. Let us define \begin{align*}\sigma:\H(G,H)&\rightarrow  \H(G_{1},H_{1}),\\
T&\mapsto T|_{G_{1}}.\end{align*}
Since $H_{1}\subset H$, and $T$ is $H$-bivariate we have that $T|_{G_{1}}$ is also bivariate, which implies that the function is well-defined. Using Equation~\eqref{Tw}, we have that $T_{w}\mapsto \phi_{w}$, implying that $\sigma$ is a surjective function, and since the dimensions are equal we obtain that it is an isomorphism of vector spaces. To see that it is an isomorphism of algebras, it is enough to see that $\sigma$ respects the convolution product. Let $ f_{1},f_{2}\in \H(G,H)$ and $w\in W$, we have 
\begin{align*} 
 f_{1}\ast f_{2}(w)&=\frac{1}{\size{H}}\pare{\sum_{x\in G_{1}}f_{1}(x)f_{2}(x^{-1}w)+\sum_{y\in G_{1}g}f_{1}(y)f_{2}(y^{-1}w)}  \\
&= \frac{1}{\size{H}}\pare{\sum_{x\in G_{1}}f_{1}(x)f_{2}(x^{-1}w)+\sum_{y\in G_{1}g}f_{1}(yg)f_{2}(g^{-1}y^{-1}w)}  \\
&=\frac{1}{\size{H}} 2\sum_{x\in G_{1}}f_{1}(x)f_{2}(x^{-1}w)=\frac{2\size{H_{1}}}{\size{H}}\sigma(f_{1})\ast \sigma(f_{2})(w).
\end{align*}
Therefore, $\sigma$ respects the convolution if we rescale by a factor of $\textstyle \sqrt{2\size{H_{1}}/\size{H}}$. 
\end{proof}

\begin{lemma} Let $G$  be any of the groups $\SO_{2n+1}(\mathbb{F}_{q})$ or $\SO_{2n}(\mathbb{F}_{q})$, and let $G_{1}$  be the Chevalley group of the same type. Let $U$ and $U_{1}$ be  the maximal unipotents of $G$ and $G_{1}$, respectively. Then, there is an isomorphism $\H(G,U)\cong \H(G_{1},U_{1})$.\label{ktu}
\end{lemma}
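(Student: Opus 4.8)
The plan is to read the statement off from Lemma~\ref{iso indice}, which was set up for exactly this purpose. I would apply it to the index-two inclusion $G_1 \leq G$, where $G$ is $\SO_{2n+1}(\FF_q)$ or $\SO_{2n}(\FF_q)$ and $G_1$ is the Chevalley group of the same type, taking $H = U$ and $H_1 = U_1$. Remark~\ref{h indice 2} already supplies the index-two hypothesis $(G:G_1)=2$, so I may fix an element $g \in G \setminus G_1$. The remaining hypotheses of Lemma~\ref{iso indice} that must be checked are: $U_1 \subseteq U$ together with $U \not\subseteq G_1$; the dimension equality $\dim \mathcal{H}(G,U) = \dim \mathcal{H}(G_1,U_1)$; and the existence of a single system of representatives indexing both the $U$-double cosets of $G$ and the $U_1$-double cosets of $G_1$.

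All three should reduce to a comparison of the Bruhat decompositions of the split $BN$-pairs of $G$ and of $G_1$. Since the two groups have the same Weyl group $W$ and their maximal unipotent subgroups are generated by the same root subgroups, the natural candidate for a common system of representatives is the set of products $\dot{w}\,t$ of a Weyl representative with a torus element, and the count controlling both dimensions is the order of the normalizer of the maximal torus, $|N_G(T)|$ versus $|N_{G_1}(T_1)|$. The mechanism that distinguishes $G$ from $G_1$ is the spinor norm, whose kernel is precisely the commutator subgroup $G_1$; tracking it on the torus and on the Weyl representatives is what should pin down the index-two discrepancy and match it against the factor produced by $g$.

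The genuinely delicate point, and the one I would expect to carry the weight of the proof, is the common-representative condition of Lemma~\ref{iso indice}: one must show that each $U$-double coset $U x U$ of $G$ splits as $U_1 x U_1 \sqcup U_1 x U_1 g$ along $G = G_1 \sqcup G_1 g$, and in particular confirm that $U$ meets the nontrivial coset $G_1 g$, i.e.\ $U \not\subseteq G_1$. This forces one to understand how the Bruhat cells of the orthogonal group intersect the commutator subgroup and its complement, and to choose $g$ compatibly with all cells at once; it is exactly the spinor-norm analysis of the torus and of the reflection representatives that has to be made precise here. Once this cell-by-cell splitting is established, Lemma~\ref{iso indice} applies directly and yields $\mathcal{H}(G,U) \cong \mathcal{H}(G_1,U_1)$, uniformly for types $B$ and $D$, with only the root datum and the defining form $J$ or $L$ changing between the two cases.
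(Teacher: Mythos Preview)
Your plan is the paper's: apply Lemma~\ref{iso indice} to the index-two inclusion from Remark~\ref{h indice 2}. The verification is simpler than you anticipate, though: since $G$ and $G_1$ share the same maximal torus $T$ and Weyl group $W$, a cardinality count forces $N_G(T)=N_{G_1}(T)$, and Bruhat then makes this common $N$ the system of double-coset representatives for both $(G,U)$ and $(G_1,U_1)$ at once---no spinor-norm tracking is needed, and the cell splitting $UxU = U_1 x U_1 \sqcup U_1 x U_1 g$ you flag as delicate is established \emph{inside} the proof of Lemma~\ref{iso indice}, not a hypothesis you must check.
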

\begin{proof}
Note that the groups $G$ and $G_{1}$ have the same maximal torus and their Weyl groups are isomorphic. Since  $N/T\cong W$, their maximal torus normalizers have the same cardinality. It is straightforward to see that the normalizer of the maximal torus of $G_{1}$ is included in the one of $G$, hence both coincide and we call it $N$. The Bruhat decomposition shows that $N$ is a system of representatives of double cosets of $U$ in $G$ and of $U_{1}$ in $G_{1}$. It is also clear that $U_{1}\subset U$, and the inclusion is strict since $G$ and $G_{1}$ would be equal otherwise. Then, Remark~\ref{h indice 2} and Lemma~\ref{iso indice} give us the isomorphism. Moreover, this isomorphism is the restriction to $G_{1}.$
\end{proof}

A natural question is if we can change the bilinear forms that we have for the Chevalley groups of types $B$ and $D$. We know that if two non-degenerate symmetric bilinear forms are equivalent then the orthogonal groups are conjugated, and therefore isomorphic, so in this case we can change one bilinear form with the other. 

From \textup{\cite[Chapter~3,Section~6]{artin2016geometric}}, we know that over a finite field $k$ of odd characteristic, there are exactly two equivalence classes of non-degenerate symmetric bilinear forms. 
More in detail, we have that two non-degenerate symmetric bilinear forms  $B_{1}$ and $B_{2}$ determined by the matrices $J_{1}$ and $J_{2}$ are equivalent if and only if $\det  (J_{1}J_{2})\in (k^{\times})^{2}$. Using this, we can deduce the following lemma.

\begin{lemma}
In type $D$, the matrix $L$ has determinant $(-1)^{\frac{n}{2}}$. Let  $k=\FF_{q}$, where $q$ is a prime, then $(-1)^{ \frac{n}{2}}$ is a quadratic residue module $q$ if and only if 
$n$ is a multiple of $4$ or $q$ is congruent to $1$ module $4$. For these cases, the bilinear form given by $L$ is equivalent to the canonical, so we can change it for the canonical.

In type $B$, the matrix  $J$ has determinant $2(-1)^{\frac{n-1}{2}}$, so this bilinear form is equivalent to the canonical if and only if $2(-1)^{\frac{n-1}{2}}\in (\mathbb{F}_{q}^{\times})^{2}$, and this happens in the following cases:
\begin{itemize}
\item $q \equiv 1 \bmod{  (8) }.$
\item $q \equiv 7 \bmod{ (8)}$ and $n\equiv 1 \bmod {(4)}.$
\item $q\equiv 3 \bmod {(8)}$ and $n\equiv 3 \bmod {(4)}.$
\end{itemize}
We can change the bilinear form given by $L$ with the canonical for these cases.
\end{lemma}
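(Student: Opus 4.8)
The plan is to reduce the statement to two determinant computations followed by an analysis of which residues are squares in $\mathbb{F}_q^\times$. Throughout, $n$ denotes the size of the matrix in question, so that $L$ is the $n\times n$ anti-diagonal identity with $n$ even (type $D$) and $J$ is the $n\times n$ anti-diagonal matrix with $n$ odd (type $B$). The essential input is the criterion recalled just before the statement: a non-degenerate symmetric bilinear form with Gram matrix $M$ is equivalent to the canonical one (Gram matrix $\Id$) exactly when $\det(M\cdot\Id)=\det M$ lies in $(\mathbb{F}_q^\times)^2$. Since equivalent forms have conjugate, hence isomorphic, orthogonal groups, this equivalence is precisely what justifies replacing $L$ (resp.\ $J$) by the canonical form.

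First I would compute the two determinants. Both $L$ and $J$ are supported on the anti-diagonal, so each factors as a diagonal matrix times the reversal permutation $i\mapsto n+1-i$. This permutation is a product of $\lfloor n/2\rfloor$ disjoint transpositions, and hence has sign $(-1)^{\lfloor n/2\rfloor}=(-1)^{n(n-1)/2}$. For $L$ every anti-diagonal entry is $1$, giving $\det L=(-1)^{n/2}$ for $n$ even; for $J$ the anti-diagonal entries multiply to $2$ (the central $2$ together with the ones), giving $\det J=2(-1)^{(n-1)/2}$ for $n$ odd. In odd characteristic both determinants are nonzero, so the forms are non-degenerate and the criterion applies.

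Then I would run the quadratic-residue bookkeeping using the two supplements to quadratic reciprocity: $-1\in(\mathbb{F}_q^\times)^2$ iff $q\equiv 1\pmod 4$, and $2\in(\mathbb{F}_q^\times)^2$ iff $q\equiv\pm1\pmod 8$. For type $D$, $\det L=(-1)^{n/2}$ is a square exactly when $n/2$ is even (so $\det L=1$) or $-1$ is a square, i.e.\ when $n\equiv 0\pmod 4$ or $q\equiv 1\pmod 4$, as claimed. For type $B$ I would factor the Legendre symbol as $\left(\frac{\det J}{q}\right)=\left(\frac{2}{q}\right)\left(\frac{-1}{q}\right)^{(n-1)/2}$ and split on the parity of $(n-1)/2$: when $n\equiv 1\pmod 4$ one needs $2$ to be a square, and when $n\equiv 3\pmod 4$ one needs $-2$ to be a square, where $-2\in(\mathbb{F}_q^\times)^2$ iff $q\equiv 1,3\pmod 8$. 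Tabulating against $q\bmod 8$ produces exactly the three listed cases, with $q\equiv 5\pmod 8$ never yielding a square.

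The computation is entirely routine; the only place that demands care is the final $q\bmod 8$ analysis for type $B$, where one must correctly combine $\left(\frac{2}{q}\right)$ and $\left(\frac{-1}{q}\right)$ for each residue of $n\bmod 4$ and check that the resulting conditions reassemble into precisely the three bullet points (and that no additional case survives). I expect this collation of the four residue classes modulo $8$ to be the main, though modest, obstacle.
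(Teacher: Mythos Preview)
Your proposal is correct and is exactly the natural argument: the paper does not supply a proof for this lemma at all, treating it as an immediate consequence of the determinant criterion from \cite{artin2016geometric} quoted just before the statement. Your determinant computations and the subsequent Legendre-symbol bookkeeping (in particular the split on $n\bmod 4$ and the tabulation over $q\bmod 8$) are precisely what one has to do, and they recover the three bullet points with nothing left over.
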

\begin{remark}\normalfont
If  $x\in \FF_{q}^{\times}$, we know that $x^{q-1}=1$, which implies that 
\[x^{\frac{q^{2}-1}{2}}=\pare{x^{q-1}}^{\frac{q+1}{2}}=1.\]
Therefore $x$ is a square in $\mathbb{F}_{q^{2}}^{\times}$. Then if we want to change the bilinear forms of types $B$ and $D$ with the canonical, for the cases in which they are not equivalent over $\FF_{q}$, we need to go to $\mathbb{F}_{q^{2}}$. Taking $k=\mathbb{F}_{q^{2}}$, and constructing the Chevalley group of types $B$ and $D$ over $k$ we get that the determinants of $J$ and $L$ belong to $\mathbb{F}_{q}$, then they are squares on $\mathbb{F}_{q^{2}}^{\times}$, so we can change these bilinear forms with the canonical.
\end{remark}

\section{Representations of \texorpdfstring{$S_{d,n}$}{S}}\label{sec 4}
In this section, we 
compute 
$\Irr(S_{d,n})$ using the characterization of 
$\Irr(W)$ given in Subsection~\ref{Representaciónes de los grupos de Weyl}. This will be necessary to compute the character values at some key elements in the next section.

\subsection{Representations of a semidirect product and of subgroups with cyclic quotient}\label{quo}
To characterize 
$\Irr(W)$ and $\Irr(S_{d,n})$, we 
first explain how to find the irreducible representations of a semidirect product and the irreducible representations of a normal subgroup $H\leq G$ with cyclic quotient, knowing those of $G$. Observe that $S_{d,n}$ has a decomposition as the semidirect product $C_{d}^{n}\rtimes W$, for all types other than $A^{\ast}$. 
\subsubsection{Representations of a semidirect product}\label{caracteres de producto semidirecto}
Now we will explain how to obtain 
$\Irr(G)$, for a group $G$ of the form $G=A\rtimes H$, with $A$ abelian. We 
follow the argument of \textup{\cite[Proposition~8.2]{serre1977linear}}. Since $A$ is abelian, all the elements of $\Irr(A)$ have degree $1$ and are identified with $X=\Hom(A,\C^{\times})$. The group $H$ acts on $X$ by 
\[(s\cdot\chi)(a)=\chi(s^{-1}as).\] 
Let $(\chi_i)_{i\in X/H}$ be a system of representatives of the orbits of $H$ in $X$. For each $i\in X/H$, let $H_i=\textbf{Stab}(\chi_{i})$ and we define $G_i=A H_i$. We extend $\chi_i$ to $G_i$ defining it by 
 \[\chi_i(ah)=\chi_i(a),\ \text{if}\  a\in A,\ \text{and}\ h\in H_i.\]
This determines a character of degree $1$ of $G_i$. Let $p\in \Irr(H_{i})$, 
we have a canonical projection $G_i\rightarrow H_i$, and composing $p$ with this projection we obtain $\tilde{p}\in \Irr(G_{i})$. 
Taking 
tensor product, we obtain $\chi_i \otimes \tilde{p}\in \Irr(G_{i})$. 
We induce this representation from $G_i$ to $G$, and we call the obtained representation $\phi_{i,p}$. 
Then, \textup{\cite[Proposition~8.2]{serre1977linear}} says that the representations $\phi_{i,p}$ are irreducible and that they form the set $\Irr(G)$. Moreover, if $\phi_{i,p}$ and $\phi_{i',p'}$ are isomorphic, then $i=i\rq$ and $p\cong p\rq$.

\begin{example}[Characters of $C_{d}^n\rtimes \mathbb{S}_n$]\label{caracteres de N en tipo A}
The group $N=C_{d}^n\rtimes \mathbb{S}_n$ satisfies the hypotheses of \textup{\cite[Proposition~8.2]{serre1977linear}}. To find $\Irr(N)$  we start by describing $X=\Hom\pare{C_{d}^n,\C^{\times}}$. The set $\Hom(C_{d},\C^{\times})$ is identified with $C_{d}$. Then, $\Hom(C_{d}^n,\C^{\times})$ consists in given a $n$-tuple of elements of $C_{d}$. If $\psi=(\psi_{1},
\ldots, \psi_{n})$ is one of these tuples, then the corresponding character is defined by \[\pare{t_1,
\ldots,t_n}\mapsto t_1^{\psi_1} 
\cdots t_n^{\psi_n}.\] 

The group $\mathbb{S}_n$ acts by conjugation on $C_{d}^n$ by permuting the entries of the diagonal. Therefore, the action of $\sigma \in \mathbb{S}_n$ on $X$ consists in permuting the entries of the $n$-tuples by applying $\sigma$. Hence, the orbits of the action are determined by the number of times that each element of $C_{d}$ appears in the tuple. Let $t_g$ be a generator of $C_{d}$ and for each orbit, we take the representative in which the elements  $t_g^{1},
\ldots,t_g^{d}$ are ordered from left to right. For $\psi\in C_{d}^{n}$ one of these representatives, we have that 
\[{\fontshape{ui}\selectfont {\bf Stab} (\psi)}=\mathbb{S}_{n_1}
\times \cdots \times \mathbb{S}_{n_{d}},\] where $n_j$ is the multiplicity of  $t_g^{j}$ in $\psi$. Then, the characters $\chi \in \Irr(\mathbf{Stab}(\psi))$ are of the form \[\mathbb{S}^{\lambda_1}\otimes 
\cdots \otimes \mathbb{S}^{\lambda_{d}},\]
where $\lambda_i \in \mathcal{P}_{n_i}$. This tells us that $\Irr(N)$ is parametrized by the functions $\Lambda:C_{d}\rightarrow \mathcal{P}$ of size $n$, that is $\textstyle \sum_{g\in C_{d}} \lvert \Lambda(g)\rvert=n$. We denote this set of functions by $\mathcal{Q}_{d,n}$, and we have a bijection between  $\Irr(N)$ and $\mathcal{Q}_{d,n}$ given by $\Lambda \mapsto \chi_{\Lambda}$. Now we will see which is the representation $\chi_{\Lambda}$. Let $N_i=C_{d}^{n_i} \rtimes \mathbb{S}_{n_i}$. Observe that if $\psi \in 
  C_{d}^{n}$  has multiplicity $n_i$ in $t_{g}^{i}$, then $C_{d}^{n}\rtimes \mathbf{Stab}(\psi)=\textstyle \prod_{i} N_i$. Then, $t_{g}^{i}$ defines a character in $N_i$ given by 
  \[\pare{t_1,
  \ldots ,t_{n_i},\sigma}\mapsto \prod_{j=1}^{n_{i}} t_{j}^{i}.\]
On the other hand, $\S^{\lambda_i}\in \Irr(\mathbb{S}_{n_i})$, and taking the canonical projection we obtain an element of $\Irr(N_i)$. Taking the tensor product, we define 
\[\chi_{\lambda_i,t_{g}^{i}}^{N_i}=\mathbb{S}^{\lambda_i}\otimes t_{g}^{i} \in \Irr(N_i).\]
Taking the tensor product and inducing to $N$ we obtain the irreducible character
\begin{equation}
\chi_{\Lambda}=\Ind_{\prod N_i}^{N} \underset{i}{\bigotimes}  \chi_{\lambda_i,t_{g}^{i}}^{N_i}\in \Irr(N).
\label{expresion irreducibles de N}
\end{equation}

\begin{notation}\label{not}
With more generality, we will denote by 
$\chi_{\lambda_i,\phi}^{N_i}$ the representation  
\[\mathbb{S}^{\lambda_i}\otimes \phi \in \Irr(N_i),\]
for $\lambda_{i}\in \mathcal{P}_{n_{i}}$ and $\phi \in C_{d}$.
\end{notation}

\end{example}

\subsubsection{Representations of subgroups with cyclic quotient}\label{caso ciclico}

\normalfont Suppose that $N\unlhd G$ with $G/N$ cyclic. We want to give a description of $\Irr(N)$ in terms of $\Irr(G)$. If $\mu \in \Irr(N)$, we denote by $\mu^{g}$ the character obtained by conjugating by  $g$. Using \textup{\autocite[Theorems~2.14,~3.2]{feit1982representation}}
 we have that for each $\chi\in \Irr(G)$, and $\mu$ an irreducible factor of $\restr{\chi}{N}$, we have
\[\restr{\chi}{N}= \sum_{i=1}^{t}\mu^{g_{i}},\]
where the $\mu^{g_{i}}$, for $i\in [t]$, are the conjugates of  $u$. 
We have that every $\mu \in \Irr(N)$ appears as an irreducible factor of $\restr{\chi}{N}$, for some $\chi \in \Irr(G)$. We call $d=\size{G/N}$, and we denote by $\epsilon$ the morphism 
\[\epsilon:G\rightarrow  C_{d}\cong  G/N.\]
We also have that if $\chi$ is a factor of $\Ind_{N}^{G}(\mu)$, then the remaining factors of $\Ind_{N}^{G}(\mu)$ are of the form $\chi \otimes \epsilon^{j}$ for some $j\geq 0$. Moreover, we have that there are exactly $d/t$ of them. On the other hand, observe that the number of different characters in $\Ind_{N}^{G}\mu$  is the order of the sequence of characters 
\[\chi,\chi \otimes \epsilon,\ldots ,\chi \otimes \epsilon^{k},\ldots \]
where $\chi$ is a character in $\Ind_{N}^{G}\mu$. Given $\chi \in \Irr(G)$ we call $o(\chi)$ to this number, and we also have that $o(\chi)=d/t$, where $t$ is  the number of irreducible factors in $\restr{\chi}{N}$.

\begin{example}\label{Sl_{n} caracteres}\normalfont
An example in which we have this situation is with 
\[N=C_{d}^{n}\rtimes \S_{n},\quad \text{and} \quad H=T'\rtimes \S_{n},\]
where $T'$ is the subgroup of $C_{d}^{n}$ of diagonal matrices of determinant $1$. The group $\S_{n}$ acts by conjugation on 
$C_{d}^{n}$ by permuting its entries. We consider the morphism
$\epsilon: C_{d}^{n}\rtimes \S_{n}\rightarrow C_{d}$, defined by 
\[s_{i}\mapsto 1,\quad \text{and} \quad t\in C_{d}^{n}\mapsto \det(t),\]
which is well-defined if we see the relations on the generators. We have 
$\Ker(\epsilon)=H,$ which implies that $N/H\cong C_{d}$.
Given  $\chi_{\Lambda} \in \Irr(C_{d}^{n}\rtimes \S_{n})$, where $\Lambda=(\lambda_{1},
\ldots ,\lambda_{d})\in \mathcal{Q}_{d,n}$ we will now compute $o(\chi)$. For doing that, we have to see what is $\chi_{\Lambda} \otimes \epsilon$. Using Equation~\eqref{expresion irreducibles de N}, we have that
\[\chi_{\Lambda}=\Ind_{\prod N_i}^{N} \underset{i}{\bigotimes} \ \chi_{\lambda_i,t_{g}^{i}}^{N_i}.\] 
Then, we have 
\[\Ind_{\prod N_i}^{N} \underset{i}{\bigotimes} \chi_{\lambda_i,t_{g}^{i}}^{N_i}\otimes \epsilon=\Ind_{\prod N_i}^{N}\pare{\underset{i}{\bigotimes} \chi_{\lambda_i,t_{g}^{i}}^{N_i} \otimes \Res_{\prod N_i}^{N} (\epsilon)}.\]
Therefore, after tensoring with  $\epsilon$, we obtain 
\[\Ind_{\prod N_i}^{N}\underset{i}{\bigotimes}  \chi_{\lambda_i,t_{g}^{i+1}}^{N_i} \, ,\]
where we are using Notation~\ref{not}. After tensoring $k$ times, we obtain 
\[\Ind_{\prod N_i}^{N}\pare{\underset{i}{\bigotimes}  \chi_{\lambda_i,t_{g}^{i+k}}^{N_i}}.\]
Therefore the order of the sequence of characters
$\chi_{\Lambda},\chi_{\Lambda} \otimes \epsilon,\ldots ,\chi_{\Lambda} \otimes \epsilon^{k},\ldots $
is the same as the order of the sequence of partitions 
$(\lambda_{1},
\ldots ,\lambda_{d}),$
and we call this number $o(\Lambda)$.

\end{example}

\subsection{Representations of Weyl groups}\label{Representaciónes de los grupos de Weyl}
In this subsection, we 
use the results of Subsection~\ref{quo}  to characterize 
$\Irr(W)$ for each type.

\medskip
{\bf Type A:} The 
set $\Irr(\mathbb{S}_n)$ is parametrized by 
$\mathcal{P}_{n}$. These representations are the Specht modules, 
\textup{\autocite[Chapter 10]{steinberg2012representation}}. We will denote the representation associated with $\lambda$ by $\mathbb{S}^{\lambda}$.

\medskip
\textbf{Types B and C}: The Weyl group of these types is $W_n$, defined in Subsection~\ref{iwa}. We know that $W_n=\set{\pm 1 }^{n}\rtimes \mathbb{S}_n$, where $\{\pm 1\}^n$ is the subgroup of diagonal matrices. Since $\{\pm 1 \}$ is cyclic, we can use the same argument of Example~\ref{caracteres de N en tipo A}, and we have that $\Irr(W_n)$ is parametrized by the pairs of partitions $(\lambda_1,\lambda_2)$ such that $\lvert \lambda_1 \rvert+ \lvert \lambda_2 \rvert=n$. For the pair $(\lambda_1,\lambda_2)$, the corresponding representation is
\[\Ind_{W_{n_1}\times W_{n_2} }^{W_n} \mathbb{S}^{\lambda_1}\otimes (\mathbb{S}^{\lambda_2}\otimes \sgn),
\label{chi en tipo B}
\]
where $n_i=\lvert \lambda_i \rvert$ for $i=1,2$, and $\sgn\in \Rep(W_{n_{2}})$ is 
defined by 
\[(t_1,
\ldots,t_{n_2}, \sigma)\mapsto t_1
\cdots t_{n_2}.\]
We will denote this representation by $\chi^{}_{\lambda_{1},\lambda_{2}}$.

\medskip
{\bf Type D:} The Weyl group of this type is $W_n\rq$, defined in Subsection~\ref{iwa}. We know that $W_{n}'=N_{n}'\rtimes \mathbb{S}_n$, where $N_{n}'$ are the diagonal matrices with determinant one. Then we are in the situation of Example~\ref{Sl_{n} caracteres}. Let $\chi_{\lambda,\mu}\in \Irr(W_{n})$, then we have
\begin{align*}
 \chi_{\lambda,\mu} \otimes \epsilon&=(\Ind_{W_{n_1}\times W_{n_2} }^{W_n} \mathbb{S}^{\lambda_1}\otimes (\mathbb{S}^{\lambda_2}\otimes \sgn))\otimes \epsilon \cong  
  \Ind_{W_{n_1}\times W_{n_2} }^{W_n} (\mathbb{S}^{\lambda_1}\otimes (\mathbb{S}^{\lambda_2}\otimes \sgn)) \otimes \Res_{W_{n_1}\times W_{n_2}}^{W_{n}}  \epsilon \\&=\Ind_{W_{n_1}\times W_{n_2} }^{W_n} (\mathbb{S}^{\lambda_1}\otimes \sgn) \otimes \mathbb{S}^{\lambda_2}=\chi_{\mu,\lambda}. 
\end{align*}

We denote by $\chi_{\lambda,\mu}'$ the restriction of $\chi_{\lambda,\mu}$ to $W_{n}'$. From what we have seen in Example~\ref{Sl_{n} caracteres}, we know that the elements of $\Irr(W_{n}\rq)$ are:
\begin{enumerate}
\item $\chi_{\lambda,\mu}'=\chi_{\mu,\lambda}'$, if $\lambda \neq \mu.$ 
\item $\chi_{\lambda,\lambda}^{+},\chi_{\lambda,\lambda}^{-}$, where these are the irreducible factors of $\chi_{\lambda,\lambda}'$ in the case of $n$ being even.
\end{enumerate}

\subsection{Representations of {$S_{d,n}$}}\label{rep normalizador}
The goal of this subsection is to use the results of the previous subsections to characterize 
$\Irr(S_{d,n})$. Since by Equation~\eqref{bij T} the sets $\Irr(S_{d,n})$ and $\Irr(N_{d,n})$ are in bijection, we would characterize in this way 
$\Irr(N_{d,n})$. Recall from Subsection~\ref{gen nor} that for each type other than $A^{\ast}$, the group $S_{d,n}$ is defined as the semidirect product $C_{d}^{n}\rtimes W$, where $W$ acts on $C_{d}^{n}$ by conjugation, 
on the same way that the Weyl group 
$W$ acts on $\{1,\ldots,n\}$. 
For 
type $A^{\ast}$, the group $C_{d}^{n}$ is changed by the group of diagonal matrices of determinant one. We will now compute $\Irr(S_{d,n})$ for each type. Recall Notation~\ref{not V}.

\medskip
{\bf Type $A$:} In this case $\S_{n}$ acts by conjugation on $C_{d}^{n}$ in the same way that in Example~\ref{caracteres de N en tipo A}. Then, the irreducible representations are parametrized by the set $\mathcal{Q}_{d,n}$, of functions $\Lambda: C_{d}\rightarrow \mathcal{P}$ of size $n$. 

\medskip
{\bf Type $A^{\ast}$:} In this case, the group is $T'\rtimes \, \S_{n}$, where $T'$ is the group of diagonal matrices of determinant one in $C_{d}^{n}$. From what we saw in Example~\ref{Sl_{n} caracteres}, we know that these irreducible representations are factors of the restriction of some representation of $\Irr(C_{d}^{n}\rtimes \S_{n})$. We also have that the restriction of $\chi_{\Lambda}$ to $T'\rtimes \S_{n}$ decomposes in $d/o(\Lambda)$ different irreducible conjugated factors.

\medskip
{\bf Types $B$ and $C$:}\label{caracteres de tipo B}
In this case the group is $C_{d}^{n}\rtimes W_{n}$, where the group $W_{n}$ acts on $C_{d}^{n}$ by permuting and inverting its elements, as described in Subsection~\ref{gen nor}.  The elements $\pm 1$ are the only elements of $C_{d}^{n}$ that are preserved when inverting (we always assume $d$ even). Following the argument of Subsection~\ref{caracteres de producto semidirecto}, we can identify the orbits of this action by the number of entries that the tuple of $C_{d}^{n}$ has on each of the following groups:  
\[\set{1},\ \set{-1},  \set{t_g^{1},t_g^{-1}}, \ldots, \set{t_g^{k},t_g^{-k}},\ldots\]
where $t_g$ is a generator of $C_{d}$. We take as a representative of each orbit the tuple that orders the elements from left to right, respecting this order of the groups and making only one of the two numbers of each group appear.
Suppose that $n_1,
\ldots,n_{(d+2)/2}$ are the quantities that determine the orbit, now we will see what the stabilizer of this orbit is. If $j>2$, any element in the stabilizer has to permute the $n_{j}$ elements of the group $j$ among them, since this element is not an inverse of itself. While for $j=1,2$, any element in the stabilizer has to permute the $n_{j}$ elements of the group $j$ among them, but also with the possibility of inverting them, since they are their own inverses. From this argument, we deduce that the stabilizer is 
\[W_{n_1}\times W_{n_2} \times \mathbb{S}_{n_3} \times \cdots \times \mathbb{S}_{n_{(d+2)/2}}. \]
From what we have seen in Subsection~\ref{Representaciónes de los grupos de Weyl}, we have that the irreducible representations of this group are of the form
\[\chi_{\lambda_1,\mu_1} \otimes \chi_{\lambda_2,\mu_2} \otimes \mathbb{S}^{\lambda_3}
\otimes \cdots \otimes \mathbb{S}^{\lambda_{(d+2)/2}},\] 
where $\lambda_{i}\in \mathcal{P}_{n_{i}}$ for $i>2$, and $\size{\lambda_i }+\size{\mu_i}=n_i$ for $i=1,2$. Let $\psi_i$ be the element of $C_{d}$ that appears in the $n_i$ positions of the group $i$, where for example $\psi_1=1$ and $\psi_2=-1$. Let $N_i=C_{d}^{n_{i}} \rtimes W_{n_i}$ for $i=1,2$, and let $N_i=C_{d}^{n_{i}} \rtimes \mathbb{S}_{n_i}$ for $i>2$. Following the argument of Subsection~\ref{caracteres de producto semidirecto}, for $i>2$ we define 
\[\chi_{\lambda_i,\psi_{i}}^{N_i}=\mathbb{S}^{\lambda_i}\otimes \psi_i \in \Irr(N_i),\]
and for $i=1,2$ \[\chi_{\lambda_i,\mu_i,\psi_i}^{N_i}=\chi_{\lambda_i,\mu_i}^{} \otimes \psi_i \in \Irr(N_i).\]
Taking the tensor product and inducing to $N=S_{d,n}^{B}$, we obtain the irreducible representation
\begin{equation}\label{chi b}\chi_{\Lambda}=\Ind_{\prod N_i}^{N}(\chi_{\lambda_1,\mu_1,1}^{N_1} \otimes \chi_{\lambda_2,\mu_2,-1}^{N_2})   \underset{i>2}{\bigotimes}  \chi_{\lambda_i,\psi_{i}}^{N_i}\in \Irr(N).\end{equation}
We know that for types $B$ and $C$ all the representations of $\Irr(S_{d,n})$ are of this type, so they are parametrized by the tuples of partitions 
\[ (\lambda_{1},\mu_{1},\lambda_{2},\mu_{2},\lambda_{3},\ldots ,\lambda_{(d+2)/2})\]
of total size $n$. We call this set $\mathcal{R}_{d,n}$ and we denote by $\chi_{\Lambda}$ the irreducible representation associated with $\Lambda \in \mathcal{R}_{d,n}$.

\medskip
{\bf Type $D$:} In this case, the group is $C_{d}^{n}\rtimes W_{n}\rq$. Observe that from the relations defining the group $C_{d}^{n}\rtimes W_{n}$ we can extend the morphism $\epsilon:W_{n}\rightarrow \{\pm 1\}$ to all the group $C_{d}^{n}\rtimes W_{n}$  by defining $\epsilon$ trivially in $C_{d}^{n}$. Then, we have that $\Ker(\epsilon)=C_{d}^{n}\rtimes W_{n}\rq$, which implies that this subgroup has index two in $C_{d}^{n}\rtimes W_{n}$. Since $S_{d,n}^{D}$ is a subgroup of index two in $S_{d,n}^{B}$, by using Subsection~\ref{caso ciclico}, we know that all the representations in $\Irr(S_{d,n}^{D})$ appear as factors of the restrictions of the representations in $\Irr(S_{d,n}^{B})$. For $\Lambda=(\lambda_{1},\mu_{1},\lambda_{2},\mu_{2},\lambda_{3},\ldots,\lambda_{(d+2)/2})\in \mathcal{R}_{d,n}$ we consider the representation $\chi_{\Lambda}$ of $N=S_{d,n}^{B}$,
\[\chi_{\Lambda}=\Ind_{\prod N_i}^{N}(\chi_{\lambda_1,\mu_1,1}^{N_1} \otimes \chi_{\lambda_2,\mu_2,-1}^{N_2})   \underset{i>2}{\bigotimes}  \chi_{\lambda_i,\psi_{i}}^{N_i},\]
where $N_{i}=C_{d}^{n_{i}}\rtimes W_{n_{i}}$ for $i=1,2$, and $N_{i}=C_{d}^{n_{i}}\rtimes \S_{n_{i}}$ for $i>2$. If we tensor with $\epsilon$, we obtain 
\begin{align*}
\chi_{\Lambda}\otimes \epsilon&=\Ind_{\prod N_i}^{N}\pare{\pare{\chi_{\lambda_1,\mu_1,1}^{N_1} \otimes \chi_{\lambda_2,\mu_2,-1}^{N_2}}  \underset{i>2}{\bigotimes}  \chi_{\lambda_i,\psi_{i}}^{N_i}}\otimes \epsilon \\
&=\Ind_{\prod N_i}^{N}\pare{\pare{\chi_{\lambda_1,\mu_1,1}^{N_1} \otimes \chi_{\lambda_2,\mu_2,-1}^{N_2}}  \underset{i>2}{\bigotimes}  \chi_{\lambda_i,\psi_{i}}^{N_i}\otimes \Res_{\prod N_i}^{N}(\epsilon)}\\
&=\Ind_{\prod N_i}^{N}\pare{\chi_{\mu_1,\lambda_1,1}^{N_1} \otimes \chi_{\mu_2,\lambda_2,-1}^{N_2}}   \underset{i>2}{\bigotimes}  \chi_{\lambda_i,\psi_{i}}^{N_i},
\end{align*}
since $\epsilon$ is trivial in $N_{i}$ for each $i>2$, and it exchanges the partitions 
in $N_{1}$ and $N_{2}$.
Then, if $(\lambda_{1},\lambda_{2})\neq (\mu_{1},\mu_{2})$ we have that $\chi_{\Lambda}\neq \chi_{\Lambda}\otimes \epsilon$, and the restriction of $\chi_{\Lambda}$ belongs to $\Irr(S_{d,n}^{D})$. 
While if $(\lambda_{1},\lambda_{2})= (\mu_{1},\mu_{2})$, we have that $\chi_{\Lambda}\otimes \epsilon=\chi_{\Lambda}$, which implies that the restriction of $ \chi_{\Lambda}^{}$ to $S_{d,n}^{D}$ decomposes into two irreducible factors $\chi_{\Lambda}^{\pm}$. Moreover, from what we saw in  Subsection~\ref{caso ciclico}, the representations $\chi_{\Lambda}$ and $\chi_{\Lambda}^{\pm}$ form $\Irr(S_{d,n}^{D})$. We denote by $\mathcal{S}_{d,n}$ the set that parametrizes $\Irr(S_{d,n}^{D})$.

\begin{notation}\label{blocks ni}
For $\Lambda=(\lambda_{1},\mu_{1},\lambda_{2},\mu_{2},\lambda_{3},\ldots,\lambda_{(d+2)/2}) \in \mathcal{R}_{d,n}$, we denote $n_{i}=\size{\lambda_{i}}+\size{\mu_{i}}$ for $i=1,2$, and $n_{i}=\size{\lambda_{i}}$ for $i>2$. We will sometimes denote $\Lambda\in \mathcal{R}_{d,n}$ by $(\Lambda_{1},\Lambda_{2},\lambda_{3},\ldots,\lambda_{(d+2)/2})$, where $\Lambda_{i}=(\lambda_{i},\mu_{i})$ for $i=1,2$.
For $\Lambda=(\lambda_{1},\ldots,\lambda_{d})\in \mathcal{Q}_{d,n}$, we denote $n_{i}=\size{\lambda_{i}}$.
\end{notation}


\subsection{Twisting by characters of  \texorpdfstring{$C_{d}$}{C} and \texorpdfstring{$C_{2}$}{C2} }\label{tensoriando}
This subsection aims to tensor the Iwahori--Hecke algebras of types $A$ and $B$ with the group algebras of the cyclic groups $C_{d}$ and $C_{2}$ respectively. We 
use this to 
compute $\Irr(Y_{d,n}^{A})$ and $\Irr(Y_{d,n}^{B})$, and to prove that they are split over $\C(u)$. To do this, we follow a similar argument to the one given in \textup{\cite[§~3.9.2]{hausel2019arithmetic}}. We extend the Iwahori--Hecke algebra in the following way. We consider the $\C[u^{\pm 1}]$-algebra  
\[H_{d,n}^{A}=H_n^{A}\corch{h}/(h^{d}-1)=H_n^{A}\otimes \C[C_{d}].\]
We denote by $H_{d,n}^{A}(u)$ the $\C(u)$-algebra $\C(u)\textstyle \otimes_{\C[u^{\pm 1}]}H_{d,n}^{A}$.
Since $H_{d,n}(u)$ is a tensor product of semisimple algebras, then it is semisimple and its characters are the cartesian product of the characters of each factor. Then, if $\lambda \in \mathcal{P}_n$ and $\psi \in \Irr(C_{d})$, we define the tensor product
\[\chi_{\lambda,\psi}^{H_{d,n}^{A}}=\chi_{\lambda}^{H_{n}^{A}}\otimes \psi \in H_{d,n}^{A}(u).\]
There exists a natural quotient 
\begin{equation} Y_{d,n}^A\rightarrow H_{d,n}^{A}, \label{cociente A} \end{equation}
defined by  $t_j\mapsto h$ and $\xi_{i}\mapsto T_{s_i}$. We can see from the 
presentation of $Y_{d,n}^A$ that the map is well-defined, since the elements $e_{i}$ and $h_{i}(-1)$ map to $1$. The specialization at $u=1$ gives us 
$\C[\mathbb{S}_n]\otimes \C[C_{d}]=\C[\mathbb{S}_n\times C_{d}].$ When specializing at $u=1$, the natural quotient gives us the surjection 
\begin{equation}
N_{d,n}^{A}\rightarrow \mathbb{S}_n\times C_{d}. \label{9}
\end{equation}
Looking at the presentations of $N_{d,n}^{A}$ and 
$\mathbb{S}_n$, we have that under this surjective map $\xi_i\mapsto s_{i},$ and $t \mapsto \det(t)$. 
Therefore, we can define the inflation 
\begin{equation} \Infl:\Irr(\mathbb{S}_n \times C_{d}) \rightarrow \Irr(N_{d,n}^{A}), \label{infl}\end{equation}
that we obtain by composing with the function of Equation~\eqref{9}. We have 
\[\Infl(\chi_{\lambda,\psi})(t \sigma)=\psi(\det(t))  \mathbb{S}^{\lambda}(\sigma).\]
If we consider $Z_{d,n}^{A}$, we also have a natural quotient
\begin{equation}Z_{d,n}^{A} \rightarrow \C[\mathbb{S}_n \times C_{d}], \label{cociente tipo A}\end{equation}
that maps $t_j$ to $h$. When specializing at $u=1$, this natural quotient gives us the surjective function $N_{d,n}^{A}\rightarrow  \mathbb{S}_n\times C_{d}.$
We have again that $\xi_i\mapsto s_{i}$ and $t\mapsto \det(t)$, so this is the same function as the one of Equation~\eqref{9}, and therefore if we consider 
$\Infl:\Irr(\mathbb{S}_n \times C_{d}) \rightarrow \Irr(N_{d,n}^{A})\label{inflacion Y_{d,n}_{A}},$
we obtain the same function as the one in Equation~\eqref{infl}. If we specialize at $u=0$, we obtain the surjective map $S_{d,n}^{A}\rightarrow \C[\mathbb{S}_n \times C_{d}],$
in which $\xi_i\mapsto s_{i}$ and $t\mapsto \det(t)$. Therefore, we consider
\begin{equation}\Infl:\Irr(\mathbb{S}_n \times C_{d}) \rightarrow \Irr(S_{d,n}^{A}), \label{Inflacion 1} \end{equation}
defined by 
\[\Infl(\chi_{\lambda,\psi})(t ,\sigma)=\psi(\det(t))  \mathbb{S}^{\lambda}(\sigma).\] 
Now we consider the $\C[u^{\pm 1}]$-algebra $H_{2,n}^{B}$ defined by
\[H_{2,n}^{B}=H_n^{B}\corch{h}/(h^{2}-1)=H_n^{B}\otimes \C[C_2].\]
We have that $H_{2,n}^B(u)$ is semisimple and its set of irreducible characters is the product of the irreducible characters of its factors. Looking at the presentation of $Y_{d,n}^B$ from Subsection~\ref{gen yoko}, we have a natural quotient
\begin{equation} Y_{d,n}^{B}\rightarrow H_{2,n}^{B}, \label{cociente B} \end{equation}
given by $t_j\mapsto h$, and $\xi_{i}\mapsto T_{\xi_{i}}$. Analogously, looking at the relations of $Z_{d,n}^{B}$, we can define a natural quotient 
\begin{equation}Z_{d,n}^{B}\rightarrow \C[W_n \times C_2],\label{cociente tipo B} \end{equation}
in which $t_j\mapsto h$, and $\xi_{i}\mapsto \xi_{i}$.  Then by specializing at $u=1$, both natural quotients give us the same surjective function
$\C[N_{d,n}^{B}]\rightarrow \C[W_n \times C_2],$
defined by  $\xi_i\mapsto \xi_i$ and $t$ maps to  $1$ or $h$  depending on whether $\det(t)$ is a square in $C_{d}$ or not, respectively. We denote this assignment by $\det(t)^{\ast}$. Then, if we compose with this map, we obtain 
\begin{equation}
\Infl:\Irr(W_n \times C_2)\rightarrow \Irr(N_{d,n}^{B})
\label{inflacion Y_{d,n}_{B}},
\end{equation}
defined by 
\[\Infl(\chi_{\lambda,\mu,\psi})(t \sigma)=\psi\pare{\det( t)^{\ast}}  \chi_{\lambda,\mu}(\sigma), \label{inflacion 2}\]
where $\psi$ is determined by the value of $\psi(h)=\pm 1$. If we specialize at $u=0$, the natural quotient gives us a surjective map
$\C[S_{d,n}^{B}]=\C[C_{d}^n \rtimes W_n]\rightarrow \C[W_n \times C_2],$
defined by $\xi_i\mapsto \xi_i$ and $t\mapsto \det( t)^{\ast}$. Then, if we compose with this map we obtain 
\begin{equation}
\Infl:\Irr(W_n \times C_2)\rightarrow \Irr(S_{d,n}^{B}), \label{Inflacion 2}
\end{equation}
defined by 
\begin{equation*}
\Infl(\chi_{\lambda,\mu,\psi})(t \sigma)=\psi(\det(t)^{\ast}) \chi_{\lambda,\mu}(\sigma),
\end{equation*}
where $\psi$ is determined by $\psi(h)=\pm 1$.

\subsection{
Splitting  over \texorpdfstring{$\C(u)$}{C(u)}}\label{son split}
In this section, we 
prove the splitness over $\C(u)$ of the algebras $Y_{d,n}$ and $ Z_{d,n}$ 
for types $A,A^{\ast},$ and $B$. We 
also give a complete set of irreducible characters of $Y_{d,n}^{B}$. To do this, we 
apply a similar argument to the one given in \textup{\cite[§~3.9.4]{hausel2019arithmetic}}. Proving 
splitness over $\C(u)$ 
implies that the values of the irreducible characters arrive at $\C[u]$, by \textup{\cite[Proposition~7.3.8]{geck2000characters}}. This will 
simplify the computations in the next section.  

\begin{lemma}
The algebra $Z_{d,n}^{B}(u)$ is split. \label{split 1}
\label{Tipo B es split 1}
\end{lemma}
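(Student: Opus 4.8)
The plan is to classify $\Irr(\mathbb{K}Z_{d,n}^{B})$ by Clifford theory relative to the commutative subalgebra $\mathcal{A}=\C(u)[t_{1},\ldots,t_{n}]$ generated by the $t_{j}$, and then to check that each irreducible so produced is already absolutely irreducible and defined over $\C(u)$. Since $\C\subset\C(u)$ contains all roots of unity, $\mathcal{A}$ is split, its simple modules being the characters $\psi\in\Hom(C_{d}^{n},\C^{\times})$, all defined over $\C$. By relation \rel{Z3} the generator $\xi_{i}$ conjugates $t_{H}$ to $t_{s_{i}(H)}$, so the $\xi_{i}$ permute the $\psi$-eigenspaces exactly as $W_{n}$ permutes the characters of $C_{d}^{n}$. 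Hence every simple $Z_{d,n}^{B}(u)$-module restricts over $\mathcal{A}$ to a single $W_{n}$-orbit of characters and is induced from an irreducible module of the inertia subalgebra attached to an orbit representative $\psi$, exactly mirroring the semidirect-product description of $\Irr(S_{d,n}^{B})$ in Subsection~\ref{caracteres de tipo B} (and following the strategy of \cite{hausel2019arithmetic}).

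The crux is to identify these inertia subalgebras. Fix $\psi$ with stabilizer $W_{n_{1}}\times W_{n_{2}}\times\mathbb{S}_{n_{3}}\times\cdots\times\mathbb{S}_{n_{(d+2)/2}}$ as in Subsection~\ref{caracteres de tipo B}, and let $\mathcal{I}_{\psi}$ be the subalgebra generated by $\mathcal{A}$ together with the $\xi_{i}$ whose reflections lie in the stabilizer. On the $\psi$-line relation \rel{Z4} reads $\xi_{i}^{2}=u\,\psi(h_{i}(-1))+(1-u)$, so everything reduces to showing $\psi(h_{i}(-1))=1$ for every stabilizer generator. For a transposition node $a_{i}=e_{i}-e_{i+1}$ this holds because $s_{i}$-invariance of $\psi$ forces the two relevant coordinates of $\psi$ to agree, whence $\psi(h_{i}(-1))=\psi(t_{H_{a_{i}}})^{d/2}=1$. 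For the terminal node of type $B$ the simple root $e_{n}$ is short, so its coroot is $2e_{n}$ and $h_{n}(-1)=t_{H_{a_{n}}}^{d/2}=t_{n}^{d}=1$ identically. Thus on the stabilizer all the quadratic relations collapse to $\xi_{i}^{2}=1$, and $\mathcal{I}_{\psi}$ acts through the group algebra $\C(u)[\,W_{n_{1}}\times W_{n_{2}}\times\mathbb{S}_{n_{3}}\times\cdots\,]$ twisted by the line $\psi$. A finite group algebra over $\C$ is split, and splitness is preserved by the base change $\C\to\C(u)$, so each inertia irreducible, and hence each induced module, is absolutely irreducible and realizable over $\C(u)$.

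Finally I would run this construction over all orbit data, i.e. over $\Lambda\in\mathcal{R}_{d,n}$; for orbits of size $\geq 2$ absolute irreducibility is immediate from the distinct $\mathcal{A}$-eigenvalues, which already reduce the commutant of the induced module to scalars. This yields $\lvert\mathcal{R}_{d,n}\rvert$ pairwise non-isomorphic absolutely irreducible $\C(u)$-representations. Since $\lvert\mathcal{R}_{d,n}\rvert=\lvert\Irr(S_{d,n}^{B})\rvert=\lvert\Irr(\mathbb{K}Z_{d,n}^{B})\rvert$ by the Tits-deformation bijections of Subsection~\ref{gen nor}, these must exhaust $\Irr(\mathbb{K}Z_{d,n}^{B})$, so every irreducible of $\mathbb{K}Z_{d,n}^{B}$ descends to $\C(u)$, which is precisely the assertion that $Z_{d,n}^{B}(u)$ is split. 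I expect the genuine obstacle to be exactly the vanishing $\psi(h_{i}(-1))=1$ at the terminal node: it is the only step where the root-system data intervene essentially, and it is what separates type $B$ (short terminal root, coroot $2e_{n}$) from type $C$, where the analogous relation would instead give $\xi^{2}=1-2u$ and a nontrivial quadratic field extension obstructing splitness.
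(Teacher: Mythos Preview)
Your Clifford-theoretic approach differs from the paper's, which instead inflates representations through the explicit quotients $Z_{d,n_i}^{A}\to\C[\S_{n_i}\times C_{d}]$ and $Z_{d,n_i}^{B}\to\C[W_{n_i}\times C_{2}]$ (Equations~\eqref{cociente tipo A} and~\eqref{cociente tipo B}), tensors over a product subalgebra $Y_{(n_i)}$, induces to $Z_{d,n}^{B}$, and then checks via the Tits bijection at $u=0$ that the resulting $\C(u)$-representations exhaust $\Irr(\mathbb{K}Z^{B}_{d,n})$. Your route is more intrinsic and avoids those auxiliary quotients, but there is a gap at the inertia step.

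The problem is that ${\bf Stab}(\psi)=W_{n_{1}}\times W_{n_{2}}\times\S_{n_{3}}\times\cdots$ is \emph{not} a standard parabolic of $W_{n}$ when $n_{2}>0$: the simple reflections of $W_{n}$ lying in the stabilizer generate only $W_{n_{1}}\times\S_{n_{2}}\times\S_{n_{3}}\times\cdots$, because the sign change $t'$ on coordinate $n_{1}+1$ is not simple. Hence your $\mathcal{I}_{\psi}$, as defined via simple $\xi_i$, is strictly smaller than the true inertia subalgebra, and relation~\rel{Z4} only governs the simple generators. To run Clifford theory you must show that $\xi_{t'}=\xi_{s_{n_1}}\cdots\xi_{s_1}\xi_{t}\xi_{s_1}\cdots\xi_{s_{n_1}}$ also squares to a unit scalar on the $\psi$-line, i.e.\ that the $2$-cocycle on the full stabilizer is a coboundary over $\C(u)$. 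A direct computation (already for $n_1=1$) gives $\psi(\xi_{t'}^{2})=u^{2}+2u(1-u)(-1)^{d/2}+(1-u)^{2}$, which is $1$ or $(2u-1)^{2}$ according to $d\bmod 4$; either way it is a square in $\C(u)^{\times}$, so the cocycle is trivializable and your conclusion can be salvaged. But this computation is exactly where the deformation parameter enters nontrivially, and it is absent from your sketch. The paper's construction sidesteps the issue by handling the $(-1)$-block through a separate copy of $Z^{B}_{d,n_{2}}$ equipped with its own simple terminal generator, rather than through a non-simple reflection inside $Z^{B}_{d,n}$.
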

\begin{proof}
Recall from Subsection~\ref{rep normalizador}, that  $\Irr(C_{d}^{n}\rtimes W_n)=\Irr(S_{d,n}^{B})$ is parametrized by the set
\[\mathcal{R}_{d,n}=\{\Lambda=(\lambda_{1},\mu_{1},\lambda_{2},\mu_{2},\lambda_{3},\ldots,\lambda_{(d+2)/2}): \text{$\Lambda$ has total size $n$}  \}.\]
We set $n_{i}=\size{\lambda_{i}}+\size{\mu_{i}}$ for $i=1,2$, and $n_{i}=\size{\lambda_{i}}$ for $i>2$. We also denote by $\psi_{i}$ the element of $C_{d}^{\ast}$ associated with the block $i$. For $i>2$, we consider the representation \[\chi_{\lambda_i,\psi_i}^{Z_{d,n_{i}}^{A}}\] of $Z_{d,n_{i}}^{A}$ that comes from inflating the representation $\mathbb{S}^{\lambda_i}\otimes \psi_i$ of $\C[C_{d}\times \S_{n}]$ via the map of Equation~\eqref{cociente tipo A}.
For $i=1,2$, we consider the representation $\chi_{\lambda_i,\mu_i,\psi_i}$ of $Z_{d,n_{i}}^{B}$ that comes from inflating the representation $\chi_{\lambda_i,\mu_i}\otimes \psi_i$ of $\C \corch{C_{2}\times W_{n}}$ via the map of Equation~\eqref{cociente tipo B}. We also consider the $\C[u^{\pm 1}]$-algebra 
\[Y_{(n_i)}=Z_{d,n_{1}}^{B}\otimes Z_{d,n_{2}}^{B} \underset{i>2}{\bigotimes} Z_{d,n_{i}}^{A}.\]
Since the algebras $Z_{d,n_{i}}^{A}$ are sub-algebras of $Z_{d,n_{i}}^{B}$, we have that $Y_{(n_i)}$ is embedded in $Z_{d,n}^{B}$ and we can consider the representation 
\[\chi_{\Lambda}^{Y_{(n_i)}}=\chi_{\lambda_1,\mu_1,1}^{Z_{d,n_{1}}^{B}}\otimes \chi_{\lambda_2,\mu_2,-1}^{Z_{d,n_{2}}^{B}} \underset{i>2}{\bigotimes} \chi_{\lambda_i,\psi_i}^{Z_{d,n_{i}}^{A}},\]
where the tensor product is taken over $\C[u^{\pm 1}]$. We define $\chi_{\Lambda}^{Y_{B}}$ the induced character
\[\chi_{\Lambda}^{Y_{B}}=\Ind_{Y_{(n_i)}}^{Z_{d,n}^{B}}\chi_{\Lambda}^{Y_{(n_i)}}.\]
By \textup{\cite[Theorem~2.9]{benson1972degrees}}, the characters of $\Irr(H_{n})$ are defined over $\C(u)$, which implies that the characters $\chi_{\Lambda}^{Y_{B}}$ are also defined over $\C(u)$.
Then, to see that $\chi_{\Lambda}^{Y_{B}}(u)$ is split, it would be enough to prove that these are all the irreducibles.
We denote $N_i=C_{d}^{n_{i}}\rtimes W_{n_i}$ for $i=1,2$,   $N_i=C_{d}^{n_{i}}\rtimes \mathbb{S}_{n_i}$ for $i>2$, and $N=S_{d,n}^{B}$. Then, if we specialize at $u=0$ the representation $\chi_{\Lambda}^{Y_{B}}$, we obtain 
\[\Ind_{\prod N_i}^{N}\Infl(\chi_{\lambda_1,\mu_1,1})\otimes \Infl(\chi_{\lambda_2,\mu_2,-1}) \underset{i>2}{\bigotimes}\Infl(\chi_{\lambda_i,\psi_i}),\]
with the inflations of Equations~\eqref{Inflacion 2} and~\eqref{Inflacion 1}. This representation is the same as the representation $\chi_{\Lambda}\in \Irr(S_{d,n}^{B})$ of Equation~\eqref{chi b}. Therefore, when specializing at $u=0$ the representations $\chi_{\Lambda}^{Y_{B}}$, we obtain the representations of $\Irr(S_{d,n}^{B})$. We take $\mathbb{K}$ a finite Galois extension of $\C(u)$ such that $\mathbb{K}Z_{d,n}^{B}$ is split, and by taking the specialization at $u=0$ we obtain a bijection between $\Irr(\mathbb{K}Z_{d,n}^{B})$ and $\Irr(S_{d,n}^{B})$. Since the specializations 
\[d(\chi_{\Lambda}^{Y_{B}})=\chi_{\Lambda}^{S_{d,n}^{B}}\]
are all the representations of $\Irr(S_{d,n}^{B})$, due to what we saw in Subsection~\ref{caracteres de tipo B}, we have that the representations $\chi_{\Lambda}^{Y_{B}}:\Lambda \in \mathcal{R}_{d,n}$ are all the irreducibles of $Z_{d,n}^{B}$, which completes the proof.
\end{proof}

\begin{lemma} The algebra $Y_{d,n}^{B}(u)$ is split. \label{split 2}
\end{lemma}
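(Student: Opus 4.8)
The plan is to mirror the proof of Lemma~\ref{split 1}, replacing $Z_{d,n}^{B}$ by $Y_{d,n}^{B}$ throughout and specializing at $u=1$ rather than at $u=0$. The crucial observation is that $Y_{d,n}^{B}$ and $Z_{d,n}^{B}$ both specialize at $u=1$ to the \emph{same} group algebra $\C[N_{d,n}^{B}]$, because at $u=1$ the relations \rel{Y4} and \rel{Z4} both collapse to $\xi_{i}^{2}=t_{H_{a_{i}}}^{d/2}$, which is relation~\rel{n4} of $N_{d,n}$. I intend to exploit this coincidence to transport the completeness statement already obtained for $Z_{d,n}^{B}$ over to $Y_{d,n}^{B}$, without re-deriving the representation theory of $N_{d,n}^{B}$ from scratch.

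For each $\Lambda=(\lambda_{1},\mu_{1},\lambda_{2},\mu_{2},\lambda_{3},\ldots,\lambda_{(d+2)/2})\in\mathcal{R}_{d,n}$, with the $n_{i}$ and the characters $\psi_{i}$ associated with the blocks as in Notation~\ref{blocks ni}, I would build a representation $\chi_{\Lambda}^{Y}$ of $Y_{d,n}^{B}$ defined over $\C(u)$ as follows. For each block $i>2$, use the quotient $Y_{d,n_{i}}^{A}\to H_{d,n_{i}}^{A}=H_{n_{i}}^{A}\otimes\C[C_{d}]$ of Equation~\eqref{cociente A} to inflate the irreducible $\chi_{\lambda_{i}}^{H_{n_{i}}^{A}}\otimes\psi_{i}$, obtaining $\chi_{\lambda_{i},\psi_{i}}^{Y_{d,n_{i}}^{A}}$; for $i=1,2$, use the quotient $Y_{d,n_{i}}^{B}\to H_{2,n_{i}}^{B}=H_{n_{i}}^{B}\otimes\C[C_{2}]$ of Equation~\eqref{cociente B} to inflate $\chi_{\lambda_{i},\mu_{i}}^{H_{n_{i}}^{B}}\otimes\psi_{i}$ (with $\psi_{1}=1$, $\psi_{2}=-1$). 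These building blocks are defined over $\C(u)$ because, by \textup{\cite[Theorem~2.9]{benson1972degrees}}, the characters of the generic Iwahori--Hecke algebras $H_{n_{i}}$ are. I would then embed the parabolic subalgebra $P_{(n_{i})}=Y_{d,n_{1}}^{B}\otimes Y_{d,n_{2}}^{B}\bigotimes_{i>2}Y_{d,n_{i}}^{A}$ into $Y_{d,n}^{B}$, form the tensor product $\chi_{\Lambda}^{P_{(n_{i})}}$ of the block representations, and set $\chi_{\Lambda}^{Y}=\Ind_{P_{(n_{i})}}^{Y_{d,n}^{B}}\chi_{\Lambda}^{P_{(n_{i})}}$, which is again defined over $\C(u)$.

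To conclude I would specialize at $u=1$ and compare with Lemma~\ref{split 1}. At $u=1$ the quotient maps of Equations~\eqref{cociente A} and~\eqref{cociente B} become exactly the group surjections $N_{d,n_{i}}^{A}\to\S_{n_{i}}\times C_{d}$ and $N_{d,n_{i}}^{B}\to W_{n_{i}}\times C_{2}$ used to build $\chi_{\Lambda}^{Y_{B}}$ for $Z_{d,n}^{B}$ in Lemma~\ref{split 1}, where one uses $H_{n_{i}}(1)\cong\C[W_{n_{i}}]$ so that the Iwahori--Hecke building blocks specialize to the corresponding Weyl-group representations. Since induction commutes with specialization, $\chi_{\Lambda}^{Y}$ and $\chi_{\Lambda}^{Y_{B}}$ specialize at $u=1$ to the same element of $\Irr(N_{d,n}^{B})$; by Lemma~\ref{split 1} together with the Tits bijection $d_{\tau_{1}}$, these specializations run exactly over $\Irr(N_{d,n}^{B})=\Irr(\C[N_{d,n}^{B}])$ as $\Lambda$ ranges over $\mathcal{R}_{d,n}$. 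Thus the $\chi_{\Lambda}^{Y}$ are $\lvert\mathcal{R}_{d,n}\rvert=\lvert\Irr(N_{d,n}^{B})\rvert$ pairwise non-isomorphic representations of $Y_{d,n}^{B}$, defined over $\C(u)$, whose $u=1$ specializations exhaust $\Irr(N_{d,n}^{B})$. Taking $\mathbb{K}$ a finite Galois extension of $\C(u)$ splitting $Y_{d,n}^{B}$ and applying Tits deformation theorem \textup{\cite[Theorem~7.4.6]{geck2000characters}} through the specialization $\phi_{1}$, the bijection $d_{\phi_{1}}:\Irr(\mathbb{K}Y_{d,n}^{B})\to\Irr(N_{d,n}^{B})$ of Subsection~\ref{gen yoko} forces the $\chi_{\Lambda}^{Y}$ to be all of $\Irr(\mathbb{K}Y_{d,n}^{B})$; since each is already defined over $\C(u)$, the algebra $Y_{d,n}^{B}(u)$ is split.

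The step I expect to be the main obstacle is the verification in the last paragraph that the $u=1$ specialization of the $Y$-construction reproduces precisely that of the $Z$-construction, and hence all of $\Irr(N_{d,n}^{B})$. The delicate point is that $Y_{d,n}^{B}$ quotients onto the genuinely $u$-deformed algebra $H_{2,n}^{B}$, whereas $Z_{d,n}^{B}$ quotients onto the undeformed group algebra $\C[W_{n}\times C_{2}]$; these maps agree only after setting $u=1$ and invoking $H_{n}(1)\cong\C[W_{n}]$, and one must check that this agreement is compatible with the inflation along the torus (the assignments $t_{j}\mapsto h$ and $t\mapsto\det(t)^{\ast}$) and is preserved by the induction functor. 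Once this identification is secured, the irreducibility, pairwise distinctness, and completeness of the $\chi_{\Lambda}^{Y}$ follow formally from Tits deformation theorem exactly as in Lemma~\ref{split 1}.
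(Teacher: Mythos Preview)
Your proposal is correct and follows essentially the same approach as the paper: construct representations of $Y_{d,n}^{B}$ by inflating Iwahori--Hecke characters through the quotients of Equations~\eqref{cociente A} and~\eqref{cociente B}, tensor over the parabolic subalgebra, induce, and then verify via specialization at $u=1$ that these agree with the $u=1$ specializations of the $Z_{d,n}^{B}$-representations from Lemma~\ref{split 1}, so that Tits deformation forces them to exhaust $\Irr(\mathbb{K}Y_{d,n}^{B})$. The ``main obstacle'' you flag---that the $Y$- and $Z$-constructions yield the same inflations at $u=1$---is precisely the point the paper checks, noting that the inflations of Equations~\eqref{inflacion Y_{d,n}_{A}},~\eqref{inflacion Y_{d,n}_{B}} and Equations~\eqref{infl},~\eqref{inflacion Y_{d,n}_{B}} coincide.
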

\begin{proof}
Let $\C(u)\subset \mathbb{K}$ be a finite Galois extension such that $\mathbb{K}Y_{d,n}^{B}$ and $\mathbb{K} Z_{d,n}^{B}$ are split. As we saw in Diagram~\eqref{diagrama}, we have the following diagram of bijections 
\begin{center}
\begin{tikzcd}
 \Irr(\mathbb{K} Y_{d,n}^{B}) \arrow[swap]{dr}{d_{\phi_{1}}} & &\Irr(\mathbb{K}Z_{d,n}^{B}) \arrow{dl}{d_{\tau_{1}}}  \\
 & \Irr(N_{d,n}^{B}) & 
\end{tikzcd}
\end{center}
Furthermore, by Lemma~\ref{split 1}, we have that the representations $\chi_{\Lambda}^{Y_{B}}:\Lambda \in \mathcal{R}_{d,n}$ are the irreducibles of $\Irr(\mathbb{K} Z_{d,n}^{B})$. We consider $\Lambda=(\lambda_1,\mu_1,\lambda_2,\mu_2,\lambda_{3},\ldots ,\lambda_{(d+2)/2}) \in \mathcal{R}_{d,n}$, with $n_i=\size{\lambda_i}+\size{\mu_{i}}$ for $i=1,2$, and  $n_i=\size{\lambda_i}$ for $i>2$.

For $i>2$, we consider 
$\chi_{\lambda_i,\psi_i}^{Y_{d,n_i}^{A}}\in \Rep(Y_{d,n_{i}}^{A})$ that comes from inflating the reresentation $\chi_{\lambda_i,\psi_i}$ of $H_{d,n}^{A}$ via the natural quotient of  Equation~\eqref{cociente A}. For $i=1,2$, we consider 
$\chi_{\lambda_i,\mu_i,\psi_i}^{Y_{d,n_i}^{B}}\in \Rep(Y_{d,n_{i}}^{B})$ 
that comes from inflating the representation $\chi_{\lambda_i,\mu_i\psi_i}$  of $H_{2,n}^{B}$ via the natural quotient of Equation~\eqref{cociente B}. We now consider the $\C[u^{\pm 1}]$-algebra 
\[Y_{(n_i)}'=Y_{d,n_1}^{B}\otimes Y_{d,n_2}^{B} \underset{i>2}{\bigotimes}  Y_{d,n_i}^{A}.\]
Since the algebras $Y_{d,n_i}^{A}$ are subalgebras of $Y_{d,n_i}^{B}$, then $Y_{(n_i)}'$ is embedded in $Y_{d,n}^{B}$, and we can consider the representation 
\[\chi_{\Lambda}^{Y_{(n_i)}'}=\chi_{\lambda_1,\mu_1,1}^{Y_{d,n_1}^{B}}\otimes \chi_{\lambda_2,\mu_2,-1}^{Y_{d,n_2}^{B}} \underset{i>2}{\bigotimes}\chi_{\lambda_i,\psi_i}^{Y_{d,n_i}^{A}},\]
where the tensor product is taken over $\C[u^{\pm 1}]$. We also define \[{\chi_{\Lambda}^{Y_B}}\rq=\Ind_{Y_{(n_i)}'}^{Y_{d,n}^{B}}\chi_{\Lambda}^{Y_{(n_i)}'}.\]
The characters ${\chi_{\Lambda}^{Y_B}}\rq$ are defined over $\C(u)$ since, by \textup{\cite[Theorem~2.9]{benson1972degrees}}, both 
$\chi_{\lambda}^{H_n^{A}}$ and $\chi_{\lambda,\mu}^{H_n^{B}}$ are. Then, to prove that $Y_{d,n}^{B}$ is split it is enough to see that these are all the irreducibles. We denote \[N=N_{d,n}^{B},\quad  
  N_1=N_{d,n_{1}}^{B}, \quad N_2=N_{d,n_{2}}^{B},\quad  \text{and} \ \quad    N_i=N_{d,n_{i}}^{A} \ \text{for $i>2$}.\] Then, if we specialize at $u=1$ the representation ${\chi_{\Lambda}^{Y_{B}}}\rq$, we obtain 
\[\Ind_{\prod N_i}^{N}\Infl(\chi_{\lambda_1,\mu_1,1})\otimes \Infl(\chi_{\lambda_2,\mu_2,-1})\underset{i>2}{\bigotimes}\Infl(\chi_{\lambda_i,\psi_i}),\]
where the inflations are the ones from Equations~\eqref{inflacion Y_{d,n}_{A}} and~\eqref{inflacion Y_{d,n}_{B}}.
Recall that by specializing at $u=1$ the representation $\chi_{\Lambda}^{Y_{B}}$ we obtained 
\[\Ind_{\prod N_i}^{N}\Infl(\chi_{\lambda_1,\mu_1,1})\otimes \Infl(\chi_{\lambda_2,\mu_2,-1}) \underset{i>2}{\bigotimes} \Infl(\chi_{\lambda_i,\psi_i}),\]
where the inflations are the ones of Equations~\eqref{inflacion Y_{d,n}_{B}} and~\eqref{infl}. Since these inflations are the same, we obtain that
\[d_{\tau_1}\pare{\chi_{\Lambda}^{Y_{B}}}=d_{\phi_1}\pare{{\chi_{\Lambda}^{Y_{B}}}\rq}.\]
We saw in Lemma~\ref{split 1} that the $\chi_{\Lambda}^{Y_B}$ form the set $\Irr(Z_{d,n}^{B})$. Therefore, since $d_{\phi_1}$ and $d_{\tau_1}$ are bijections, we have that the representations ${\chi_{\Lambda}^{Y_B}}\rq$ compose the set $\Irr(Y_{d,n}^{B})$, which completes the proof.
\end{proof}

\begin{remark}\normalfont
Following the same argument of Lemmas~\ref{split 1} and~\ref{split 2}, 
we obtain that $Z_{d,n}^{A}(u)$ and $ Y_{d,n}^{A}(u)$ are split. Moreover, since $Z_{d,n}^{A^{\ast}}(u)$ and $Y_{d,n}^{A^{\ast}}(u)$ are subalgebras of $Z_{d,n}^{A}(u)$ and $ Y_{d,n}^{A}(u)$ respectively, they are split too.
\end{remark}

\section{Character values at \texorpdfstring{$T_{w_{0}}$}{T1} and \texorpdfstring{$T_{w_{0}}^{2}$}{T2}}\label{sec 5}
The goal of this section is to compute the values of the irreducible characters of $\H(G,U)$ at the elements $T_{w_{0}}$ and $T_{w_{0}}^{2}$, defined in Equation~\ref{tw0}. From Lemma~\ref{central}, we know that $T_{0}^{2}$ is central in $Y_{d,n}$. Then, by Schur's lemma, we know that $T_{0}^{2}$ acts by scalar multiplication in any irreducible representation. We will compute this scalar for each irreducible representation and also the character values at $T_{0}$. Then, by specializing at $q^{-1}$ we 
obtain the 
character values of $\H(G,U)$ at $T_{w_{0}}$ and $T_{w_{0}}^{2}$.

\subsection{Character values at \texorpdfstring{$T_{0}^{2}$}{T} }
In this subsection, we 
compute the scalar with which $T_{0}^{2}$ acts on each irreducible representation. We will
express these scalars with a formula depending on the characters of $S_{d,n}$, which we already described in Subsection~\ref{rep normalizador}. To do this, we 
follow a similar argument to the one given in  \textup{\cite[Theorem~9.2.2]{geck2000characters}} and \textup{\cite[Theorem~4.3.4]{hausel2019arithmetic}}.

\begin{remark}\normalfont \label{Autovalores 1}
Let $\mathbb{K}$ be a finite Galois extension of $\C(u)$ such that $\mathbb{K}Y_{d,n}$ is split, and let $(V,\pi)\in \Rep(\mathbb{K}Y_{d,n})$. 
The idempotent $e_i$ is the projector to the subspace $V_i=\Im(e_{i})$, in which it acts trivially, and there is a direct sum decomposition $V=V_i\oplus W_i$, with $W_i=\Ker(e_i)$. We call $T_{i}$ the endomorphism induced by $\xi_{i}$. Lemma~\ref{conmute} tells us that $\xi_i$ commutes with $e_i$, hence $T_i$ preserves this decomposition. We denote by $h_{i}(-1)$ the endomorphism induced by $h_{i}(-1)$. Applying Lemma~\ref{conmute}, we have that $h_{i}(-1)e_{i}=e_{i}$, which implies that $h_{i}(-1)$ acts trivially on $V_i$. Over $V_i$ the endomorphism $T_i$ satisfies the relation
\[T_{i}|_{V_{i}}^{2}=u+(1-u)T_i|_{V_{i}}.\]
And over $W_i$ it satisfies the relation 
\[T_i|_{W_i}^{2}=u h_{i}(-1)|_{W_i},\]
but since $h_{i}(-1)^{2}=1$, we have that 
\[T_{i}|_{W_{i}}^{4}=u^{2}.\]
Hence, the possible eigenvalues of $T_i$ are $1,-u,\pm  
 \sqrt{u},\pm i\sqrt{u}$.
\end{remark}
Now we 
do a similar argument for $Z_{d,n}.$
\begin{remark}\normalfont \label{Autovalores 2}
Let $\mathbb{K}$ be a finite Galois extension of $\C(u)$ such that $\mathbb{K}Z_{d,n}$ is split, and let $(W,\sigma)\in \Rep(\mathbb{K}Z_{d,n})$. 
We 
call $T_i$ the endomorphism induced by $\xi_i$, and  $h_{i}(-1)$ the endomorphism induced by $h_i(-1)$. Then, we have the relation
\[T_{i}^{2}=h_{i}(-1)u+(1-u).\]
Since $h_{i}(-1)^{2}=1$, the possible eigenvalues of $h_{i}(-1)$ are $\pm 1$, hence the possible eigenvalues of $T_{i}^{2}$ are $1$ and $1-2u$, and the possible eigenvalues of $T_i$ are $\pm 1,\pm \sqrt{1-2u}.$
\end{remark}
The following theorem gives us, for each irreducible representation of $Z_{d,n}$, the scalar with which $T_{0}'^{2}$ acts. We will then do the same for $Y_{d,n}$, but for that we 
need this theorem.
\begin{theorem}\label{teo g}
Consider $\mathbb{K}$ such that $\mathbb{K}Z_{d,n}$ is split, then for $\chi \in \Irr(\mathbb{K} Z_{d,n})$ the element $T_{0}'^{2}$ acts by scalar multiplication by 
\[z_{\chi}=(1-2u)^{g_{\chi}},\]
where 
\begin{equation}\label{g ch}
g_{\chi}=\sum_{a_{i}}\frac{  \chi_{0}(1)-\chi_{0}(t_{H_{a_i}}^{d/2})}{2  \chi_{0}(1)},
\end{equation}
where we are using Notation~\ref{not t}, $\chi_{0}\in \Irr(S_{d,n})$ is the specialization of $\chi$ at $0$ and the sum is taken over all the simple roots appearing in the decomposition of $w_{0}$, counted with multiplicity.\label{g_{chi}}
\end{theorem}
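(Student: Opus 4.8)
The plan is to combine the centrality of $T_{0}'^{2}$, recorded in the Remark following Lemma~\ref{central}, with Schur's lemma, and then to recover the scalar through a determinant computation. Fix $\chi\in\Irr(\mathbb{K}Z_{d,n})$ with representation space $(W,\sigma)$; since $T_{0}'^{2}$ is central it acts on $W$ by a scalar $z_{\chi}\in\mathbb{K}$, so that $\det(\sigma(T_{0}')^{2})=z_{\chi}^{\chi_{0}(1)}$, where $\dim W=\chi_{0}(1)$. Writing the fixed reduced expression $w_{0}=s_{i_{1}}\cdots s_{i_{k}}$, we have $\sigma(T_{0}')=T_{i_{1}}\cdots T_{i_{k}}$ with $T_{i}=\sigma(\xi_{i})$, and multiplicativity of the determinant gives
\[
z_{\chi}^{\chi_{0}(1)}=\det(\sigma(T_{0}')^{2})=\prod_{j=1}^{k}\det(T_{i_{j}}^{2}).
\]
Thus everything reduces to computing the local factors $\det(T_{i}^{2})$.

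For these I would use Remark~\ref{Autovalores 2}: relation~\rel{Z4} gives $T_{i}^{2}=u\,\sigma(h_{i}(-1))+(1-u)\Id$. As $h_{i}(-1)^{2}=1$, the endomorphism $\sigma(h_{i}(-1))$ is an involution, and $W$ splits into its $(\pm1)$-eigenspaces of dimensions $m_{i}^{+}$ and $m_{i}^{-}$; on the $(+1)$-part $T_{i}^{2}$ is the identity and on the $(-1)$-part it is multiplication by $1-2u$, whence
\[
\det(T_{i}^{2})=\det(u\,\sigma(h_{i}(-1))+(1-u)\Id)=(1-2u)^{m_{i}^{-}}.
\]
Now $h_{i}(-1)=t_{H_{a_{i}}}^{d/2}$ belongs to the subalgebra generated by the $t_{j}$, whose relations~\rel{Z1},~\rel{Z2},~\rel{Z3} do not involve $u$, so its trace $\chi(h_{i}(-1))$ is independent of $u$ and equals its specialization $\chi_{0}(t_{H_{a_{i}}}^{d/2})$; together with $m_{i}^{+}+m_{i}^{-}=\chi_{0}(1)$ this yields $m_{i}^{-}=(\chi_{0}(1)-\chi_{0}(t_{H_{a_{i}}}^{d/2}))/2$. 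Substituting into the product gives
\[
z_{\chi}^{\chi_{0}(1)}=(1-2u)^{\sum_{j}m_{i_{j}}^{-}}=(1-2u)^{\chi_{0}(1)\,g_{\chi}},
\]
with $g_{\chi}$ exactly the sum of Equation~\eqref{g ch}, taken over the simple roots of the reduced word counted with multiplicity.

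The remaining and only genuinely delicate point is to extract the correct $\chi_{0}(1)$-th root, since the displayed identity determines $z_{\chi}$ only up to a root of unity. To fix it I would specialize at $u=0$: there $Z_{d,n}(0)\cong\C[S_{d,n}]$ and $T_{0}'$ specializes to $\widetilde{w_{0}}=\xi_{i_{1}}\cdots\xi_{i_{k}}\in S_{d,n}$, which lies in the subgroup generated by the $\xi_{i}$; by the Coxeter relations~\rel{S4} and~\rel{S5} this subgroup is a copy of $W$, so $\widetilde{w_{0}}$ is the longest element $w_{0}$ and $\widetilde{w_{0}}^{2}=w_{0}^{2}=1$. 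Hence $T_{0}'^{2}$ specializes to the identity and $z_{\chi}|_{u=0}=1$. Since $(1-2u)^{g_{\chi}}$ also equals $1$ at $u=0$, the ambiguous root of unity must be trivial, and $z_{\chi}=(1-2u)^{g_{\chi}}$ is the branch lying in $\mathbb{K}$ with value $1$ at $u=0$. The main obstacle is therefore not the determinant bookkeeping but this normalization: making rigorous sense of the fractional exponent $(1-2u)^{g_{\chi}}$ as an element of $\mathbb{K}$ and confirming that the specialization at $u=0$ indeed pins down the trivial root of unity.
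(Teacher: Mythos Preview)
Your proof is correct and follows essentially the same route as the paper's: both use Schur's lemma on the central element $T_{0}'^{2}$, compute $\det(T_{i}^{2})=(1-2u)^{m_{i}^{-}}$ with $m_{i}^{-}$ the multiplicity of $-1$ for $h_{i}(-1)$ (the paper reaches this by listing the eigenvalues $\pm 1,\pm\sqrt{1-2u}$ of $\xi_{i}$ itself, you by decomposing along $h_{i}(-1)$, but the outcome is identical), and then specialize at $u=0$ using $\widetilde{w_{0}}^{2}=1$ to eliminate the root-of-unity ambiguity. Your concern about the fractional exponent is resolved exactly as you describe, and the paper adds nothing beyond that normalization step.
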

\begin{proof}
Let $(V,\pi)$ be an irreducible representation with character $\chi$. Using Remark~\ref{Autovalores 2}, we have that the possible eigenvalues of $\pi(\xi_{i})$ are $\pm 1,\pm \sqrt{1-2u}$. Let 
$n_{0},\, n_{1},\, n^{+},\, n^{-}$ be the multiplicities of $1,-1,\sqrt{1-2u},-\sqrt{1-2u}$ respectively. Observe that the sum of the multiplicities of the eigenvalues $\pm \sqrt{1-2u}$ is the multiplicity of $-1$ as an eigenvalue of $h_{i}(-1)$, hence we have
\[n^{+}+n^{-}=\frac{\chi(1)-\chi(h_{i}(-1))}{2},\] and by specializing at $0$ we obtain that 
\[n^{+}+n^{-}=\frac{\chi_{0}(1)-\chi_{0}(t_{H_{a_i}}^{d/2})}{2}.\]
On the other hand, we have 
\[\det(\pi(\xi_i))=1^{n_0}(-1)^{n_1}\pare{\sqrt{1-2u}}^{n^{+}}(-\sqrt{1-2u})^{n^{-}},\]
hence 
\[\det(\pi(\xi_i))^{2}=(1-2u)^{n^{+}+n^{-}}.\]
Since $T_{0}'^{2}$ is central, Schur´s lemma implies that it acts by scalar multiplication by some scalar $z_{\chi}\in \mathbb{K}$. Taking determinants we have
\[z_{\chi}^{\chi_0(1)}=\det(\pi(T_{0}'^{2}))=\textstyle \underset{a_{i}}{\prod}\det(\pi(\xi_i)^{2})=(1-2u)^{\sum_{i}(\chi_{0}(1)-\chi_{0}(t_{H_{a_i}}^{d/2}))/2}.\]
Then, taking $\chi_0(1)$-th roots, we have that there exists $\eta$, a $\chi_{0}(1)$-th root of unity, such that 
\[z_{\chi}=\eta  (1-2u)^{g_{\chi}}.\]
To prove that $\eta=1$, we specialize at $u=0$ and we have that $\tau_{0}(T_{0}^{2})=\widetilde{w_{0}}^{2}=1$, by \textup{\cite[Lemma~1.5.3]{geck2000characters}}. Then, we have
\[\eta \chi_{0}(1)=\tau_{0}(\chi(T_{0}^{2}))=\chi_{0}(\tau_{0}(T_{0}^{2}))=\chi_{0}(1),\]
which implies that $\eta=1$. This completes the proof.
\end{proof}

\begin{corollary}\label{sigma_{0}}
If $\chi \in \Irr(\mathbb{K}Z_{d,n})$, we have by Theorem~\ref{teo g} that $\chi(T_{0}'^{2})=\chi(1)(1-2u)^{g_{\chi}}$, then by specializing at $u=1$ we obtain
\[\chi_{1}(\sigma_{0}^{2})=\chi_{1}(\tau_{1}(T_{0}'^{2}))=\tau_{1}(\chi(T_{0}'^{2}))=\tau_{1}((1-2u)^{g_{\chi}}\chi(1))=(-1)^{g_{\chi}}\chi(1).\]
\end{corollary}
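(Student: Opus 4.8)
The plan is to obtain the value of $\chi_1(\sigma_0^2)$ by reading off the generic scalar from Theorem~\ref{teo g} and then transporting it across the specialization $\tau_1$ at $u=1$, exploiting the compatibility between specialization and character values built into the Tits deformation framework of Subsection~\ref{gen nor}. This is a direct corollary, so the work is entirely in justifying a chain of equalities rather than in producing a new idea.

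First I would invoke Theorem~\ref{teo g}. By Lemma~\ref{central} (and the Remark immediately following it) the element $T_{0}'^{2}$ is central in $Z_{d,n}$, so on the irreducible $\mathbb{K}Z_{d,n}$-module affording $\chi$ it acts, by Schur's lemma, as the scalar $z_{\chi}=(1-2u)^{g_{\chi}}$; taking traces gives the identity $\chi(T_{0}'^{2})=\chi(1)(1-2u)^{g_{\chi}}$ in the integral closure $A^{\ast}$ of $\C[u^{\pm 1}]$ in $\mathbb{K}$. Next I would recall from Subsection~\ref{gen nor} that $\tau_1$ is the specialization of $u$ at $1$ extended to $A^{\ast}$, that the isomorphism $Z_{d,n}(1)\cong \C[N_{d,n}]$ carries $T_{0}'$ to $\sigma_{0}$, and that the Tits deformation bijection identifies the specialized character as $\chi_1=d_{\tau_1}(\chi)=\tau_1\circ\chi$. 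Concretely, if $\pi$ affords $\chi$ and $\pi_1$ denotes its base change along $\tau_1$, then applying $\tau_1$ entrywise gives $\chi_1(\tau_1(z))=\tau_1(\chi(z))$ for every $z$ defined over $\C[u^{\pm 1}]$. Since $\tau_1$ is a ring homomorphism, $\tau_1(T_{0}'^{2})=(\tau_1(T_{0}'))^{2}=\sigma_{0}^{2}$, so applying $\chi_1=\tau_1\circ\chi$ to the identity of the previous step yields $\chi_1(\sigma_0^2)=\tau_1(\chi(T_0'^2))=\tau_1\bigl(\chi(1)(1-2u)^{g_{\chi}}\bigr)$.

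Finally I would evaluate the right-hand side: $\chi(1)$ is an integer dimension fixed by $\tau_1$, while $\tau_1(1-2u)=1-2=-1$, so $\tau_1\bigl((1-2u)^{g_{\chi}}\bigr)=(-1)^{g_{\chi}}$ and hence $\chi_1(\sigma_0^2)=(-1)^{g_{\chi}}\chi(1)$. I do not expect a genuine obstacle here, as everything reduces to Theorem~\ref{teo g} together with the homomorphism property of $\tau_1$; the only point that deserves a word of care is that $(-1)^{g_{\chi}}$ is well defined, i.e.\ that $g_{\chi}\in\mathbb{Z}_{\geq 0}$, which is already implicit in the assertion $z_{\chi}=(1-2u)^{g_{\chi}}$ of Theorem~\ref{teo g}, the exponent there being a genuine nonnegative integer power rather than a formal fractional exponent.
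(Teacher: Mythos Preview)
Your proposal is correct and follows essentially the same approach as the paper: the corollary in the paper is stated with its proof embedded as the displayed chain of equalities, and you have simply unpacked why each of those equalities holds (Schur's lemma plus Theorem~\ref{teo g} for the scalar, $\tau_1(T_0')=\sigma_0$ from the presentation, $\chi_1=\tau_1\circ\chi$ from the Tits deformation setup, and the arithmetic $\tau_1(1-2u)=-1$). Your added remark that $g_{\chi}$ must be a genuine integer is a reasonable piece of care, though the paper leaves this implicit in the statement of Theorem~\ref{teo g}.
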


\begin{theorem}\label{f_{chi}}
Assume that $\mathbb{K}Y_{d,n}$ is split, then for $\chi\in \Irr(\mathbb{K}Y_{d,n})$ the element $T_{0}^{2}$ acts by scalar multiplication by 
\[y_{\chi}=u^{f_{\chi}}(-1)^{g_{\chi\rq}},\]
where 
\begin{equation}\label{ex f chi}f_{\chi}=\sum_{a_{i}} 1-\frac{\chi_{0}(e_{i}\xi_{i})}{\chi_{0}(1)},\end{equation}
$\chi'\in \Irr(\mathbb{K} Z_{d,n})$ is such that $\chi \rq_{1}=\chi_{1}$, and $\chi_{0}=d_{\tau_{0}}(\chi \rq)\in \Irr(S_{d,n})$ is the character associated with $\chi$ in Diagram~\ref{diagrama}. The sum is taken over all simple roots appearing in the decomposition of $w_{0}$, counted with multiplicity. In particular, if we specialize at $u=1/q$ we have that the element $T_{w_{0}}^{2}\in \mathcal{H}(G,U)$ acts by the scalar $q^{-f_{\chi}}(-1)^{g_{\chi\rq}}$.
\end{theorem}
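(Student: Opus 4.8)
The plan is to mimic the proof of Theorem~\ref{teo g}, exploiting that $T_0^2$ is central in $Y_{d,n}$ (Lemma~\ref{central}) so that by Schur's lemma it acts on the irreducible $(V,\pi)$ with character $\chi$ by a scalar $y_\chi\in\mathbb{K}$, and then pinning down $y_\chi$ by computing $\det(\pi(T_0^2))=y_\chi^{\chi(1)}$ in two ways. Fixing a reduced expression $w_0=s_{i_1}\cdots s_{i_k}$, one has $T_0=\pi(\xi_{i_1})\cdots\pi(\xi_{i_k})$, so
\[
y_\chi^{\chi(1)}=\det(\pi(T_0))^2=\prod_{a_i}\det(\pi(\xi_i))^2,
\]
the product running over the simple roots occurring in the reduced word, counted with multiplicity.

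First I would compute each factor $\det(\pi(\xi_i))^2$ using the eigenvalue analysis of Remark~\ref{Autovalores 1}. Writing $V=V_i\oplus W_i$ with $V_i=\Im(e_i)$, on $V_i$ the operator $\pi(\xi_i)$ has eigenvalues $1$ and $-u$ with multiplicities $m_0,m_1$, while on $W_i$ it has eigenvalues $\pm\sqrt u$ (where $h_i(-1)=1$) and $\pm i\sqrt u$ (where $h_i(-1)=-1$). On squaring the determinant, the eigenvalues $\pm i\sqrt u$ are exactly those contributing a sign, so
\[
\det(\pi(\xi_i))^2=u^{\,2m_1+\dim W_i}\,(-1)^{r_i},\qquad r_i=\frac{\chi(1)-\chi(h_i(-1))}{2},
\]
since $r_i$ is the multiplicity of the eigenvalue $-1$ of $h_i(-1)$. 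Collecting the $u$-powers gives exponent $\chi(1)-m_0+m_1$ for each $i$.

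The key step is to re-express the integers $m_0-m_1$ and $\chi(h_i(-1))$ through the character $\chi_0$ of $S_{d,n}$. Since $h_i(-1)^2=1$, its trace is a $u$-independent integer, so $\chi(h_i(-1))=\chi_0(t_{H_{a_i}}^{d/2})$ and hence $\sum_{a_i}r_i=\chi(1)\,g_{\chi'}$. For $m_0-m_1$ I would note that $e_i$ commutes with $\xi_i$ (Lemma~\ref{conmute}) and projects onto $V_i$, so $\chi(e_i\xi_i)=m_0-u\,m_1$; specializing at $u=1$ yields $\chi_1(e_i\xi_i)=m_0-m_1$. Running the same computation inside $Z_{d,n}$ (Remark~\ref{Autovalores 2}, where on $\Im(e_i)$ the operator $\xi_i$ squares to $1$) shows that $\chi'(e_i\xi_i)$ is a $u$-independent integer, so $\chi_0(e_i\xi_i)=\chi'_1(e_i\xi_i)=\chi_1(e_i\xi_i)=m_0-m_1$, using $\chi'_1=\chi_1$. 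This identifies the total $u$-exponent as $\chi(1)f_\chi$ and the sign exponent as $\chi(1)g_{\chi'}$, whence $y_\chi^{\chi(1)}=\bigl(u^{f_\chi}(-1)^{g_{\chi'}}\bigr)^{\chi(1)}$.

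Finally I would remove the resulting $\chi(1)$-th-root-of-unity ambiguity $y_\chi=\eta\,u^{f_\chi}(-1)^{g_{\chi'}}$ exactly as in Theorem~\ref{teo g}: specializing at $u=1$, the operator $T_0^2$ becomes $\sigma_0^2\in N_{d,n}$, which by Corollary~\ref{sigma_{0}} acts on $\chi_1=\chi'_1$ by $(-1)^{g_{\chi'}}$; comparing with $\phi_1(y_\chi)=\eta(-1)^{g_{\chi'}}$ forces $\eta=1$. The specialization $u=1/q$ then gives the stated action $q^{-f_\chi}(-1)^{g_{\chi'}}$ of $T_{w_0}^2$ on $\mathcal{H}(G,U)$. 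The main obstacle is the bookkeeping of the middle step: the trace $\chi(e_i\xi_i)$ is genuinely $u$-dependent in $Y_{d,n}$, so the constant $\chi_0(e_i\xi_i)$ must be extracted by transporting through the second deformation $Z_{d,n}$ and the coincidence $\chi'_1=\chi_1$, rather than read off directly from the $Y_{d,n}$-representation.
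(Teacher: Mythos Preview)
Your proof is correct and follows essentially the same argument as the paper: the determinant computation of $\pi(\xi_i)^2$ via the eigenvalue decomposition of Remark~\ref{Autovalores 1}, the transport of $m_0-m_1$ to $\chi_0(e_i\xi_i)$ through the auxiliary deformation $Z_{d,n}$ using $\chi'_1=\chi_1$, and the specialization at $u=1$ together with Corollary~\ref{sigma_{0}} to kill the root-of-unity ambiguity. The only cosmetic difference is that you identify the sign contribution $(-1)^{\sum r_i}=(-1)^{\chi(1)g_{\chi'}}$ directly from the determinant, whereas the paper simply absorbs all non-$u$ factors into the unknown root of unity $\eta$ and recovers $\eta=(-1)^{g_{\chi'}}$ only at the specialization step; since you invoke Corollary~\ref{sigma_{0}} anyway, your extra identification is harmless but redundant.
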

\begin{proof}
Let $(V,\pi)$ be an irreducible representation with character $\chi$. By Remark~\ref{Autovalores 1}, the eigenvalues of $\pi(\xi_i)$ are $1,-u,\pm \sqrt{u},\pm i\sqrt{u}$, and let  $m_{0},m_{1},m^{+},m^{-},m_{2}^{+},m_{2}^{-}$ be their multiplicities. 
Then, we have 
\begin{align}
\det(\pi(\xi_{i}))=& 1^{m_{0}}(-u)^{m_{1}}(\sqrt{u})^{m^{+}}(-\sqrt{u})^{m^{-}}(i\sqrt{u})^{m_{2}^{+}}(-i\sqrt{u})^{m_{2}^{-}} \nonumber \\
=& (-1)^{m_{1}+m_{2}^{-}+m^{-}}\sqrt{u}^{2m_{1}+m_{2}^{+}+m_{2}^{-}+m^{+}+m^{-}}i^{m_{2}^{+}+m_{2}^{-}}.\nonumber \\
 \chi_{1}(1)=& \ m_{0}+m_{1}+m^{+}+m^{-}+m_{2}^{+}+m_{2}^{-}.\nonumber
\end{align}
Therefore, we have
\begin{align}
\det(\pi(\xi_{i}))^{2}=&u^{2m_{1}+m_{2}^{+}+m_{2}^{-}+m^{+}+m^{-}}(-1)^{m_{2}^{+}+m_{2}^{-}} \nonumber \\
=& u^{\chi_{1}(1)+m_{1}-m_{0}}(-1)^{m_{2}^{+}+m_{2}^{-}}\label{pi(T_{0}^2}.
\end{align}
We know from Lemma~\ref{conmute} that $\xi_{i}$ commutes with $e_{i}$, and that in $\Ker(e_{i})$ the operator $\xi_{i}e_{i}$ is null. While in $\Im(e_{i})$, the operator $\xi_{i}e_{i}$ acts in the same way as $\xi_{i}$, where it has eigenvalues $1$ and $-u$ with multiplicities $m_{0}$ and $m_{1}$, respectively. Hence $\chi(\xi_{i}e_{i})=m_{0}-m_{1}u$, and by specializing at $1$ we obtain
\begin{equation}
\chi_{1}(\xi_{i}e_{i})=m_{0}-m_{1}. \label{chi_{1}}
\end{equation}
Consider the element $\xi_{i}e_{i}\in Z_{d,n}$, and let $S_{i}$ be the operator that induces in $\chi'$. We have that in $\Ker(e_{i})$ this operator is null, and in $\Im(e_{i})$ it satisfies the relation
\[S_{i}^{2}=u h_{i}(-1)+1-u.\]
Using that $h_{i}(-1)e_{i}=e_{i}$, we have that $S_{i}^{2}=1$ in $\Im(e_{i})$, hence the possible eigenvalues of $S_{i}$ are $0,\pm 1$. In particular, we have $\chi'(\xi_{i}e_{i})=\chi_{1}(\xi_i e_i)=\chi_{0}(\xi_i e_i)$, and replacing it in Equation~\eqref{chi_{1}} we obtain 
\begin{equation*}
\chi_{0}(\xi_{i}e_i)=m_{0}-m_{1}.
\end{equation*}
Replacing this in Equation~\eqref{pi(T_{0}^2}, we obtain
\begin{equation*}
\det(\pi(\xi_{i}))^{2}=u^{\chi_{0}(1)-\chi_{0}(\xi_{i}e_{i})}(-1)^{m_{2}^{+}+m_{2}^{-}}.
\end{equation*}
Since $T_0^{2}$ is central, by Schur's lemma it acts by scalar multiplication by some $y_{\chi}\in \mathbb{K}$. Taking determinants we have
\[y_{\chi}^{\chi_{0}(1)}=\det(\pi(T_0^{2}))=\prod_{i}\det(  \pi(\xi_{i})^{2})=u^{\sum_{i} \chi_0(1)-\chi_0(\xi_{i}e_{i})}(-1)^{\sum_{i}  m_{2}^{+}+m_{2}^{-}}.\]
Then, taking $\chi_0(1)$-th roots, we have that there exists $\eta\in \C$  such that 
\[y_{\chi}=\eta   u^{f_{\chi}}.\]
To find $\eta$ we specialize at $u=1$, and we have $\phi_1(T_0^{2})=\sigma_0^{2}$. Therefore, we have
\[ \eta  \chi_1(1)=\phi_1(\chi(T_0^{2}))=\chi_1(\phi_1(T_0^{2}))=\chi_1(\sigma_{0}^{2})=(-1)^{g_{\chi'}}\chi_{1}(1),\]
where for the last equality we are using Corollary~\ref{sigma_{0}}. Therefore $\eta=(-1)^{g_{\chi'}}$, which completes the proof.
\end{proof}

\begin{remark}\normalfont \label{los dos son split}
Assume that $Y_{d,n}(u)$ and $Z_{d,n}(u)$ are split (as we already proved for types $A,A^{\ast}$ and $B$ in Subsection~\ref{son split}), and let $\chi \in \Irr(Y_{d,n}(u))$. By \textup{\cite[Proposition~7.3.8]{geck2000characters}}, we have that $\chi(\xi_{i})\in \C[u^{\pm 1}]$. This implies that $m_{2}^{+}=m_{2}^{-}$ and $m^{+}=m^{-}$, hence we have
\[\chi(\xi_{i})=m_{0}-m_{1}u=\chi(e_{i}\xi_{i}).\]
On the other hand, let $n_{0},\, n_{1},\, n^{+},\, n^{-}$ be the multiplicities of $ 1,-1 ,{\sqrt{1-2u}}, -{\sqrt{1-2u}}$ as eigenvalues of $\xi_{i}$ in $Z_{d,n}$ in the representation $\chi'$. Since $Z_{d,n}(u)$ is split we have $\chi'(\xi_{i})\in \C[u^{\pm 1}]$, which implies that $n^{+}=n^{-}$. Then, we have that   
\[\chi'(\xi_{i})=n_{0}-n_{1}=\chi_{0}(\xi_{i})=\chi_{1}(\xi_{i}).\]
Therefore, we have 
\[m_{0}-m_{1}=\phi_{1}(m_{0}-m_{1}u)=\phi_{1}(\chi(\xi_{i}))=\chi_{1}(\xi_{i})=\chi_{0}(\xi_{i}),\]
which allows us to exchange  $\chi_{0}(\xi_{i}e_{i})$ with $\chi_{0}(\xi_{i})$ in Equation~\eqref{ex f chi}.
\end{remark}

\begin{notation}\normalfont
The 
set $\Irr(S_{d,n})$ is parametrized by  the sets $\mathcal{Q}_{d,n},\mathcal{R}_{d,n}$ and $\mathcal{S}_{d,n}$, depending on the type. For $\Lambda$ an element in one of these sets, we will denote by $\chi_{\Lambda}\in \Irr(Y_{d,n})$ the associated representation. Looking at the diagrams~\eqref{diagrama} and~\eqref{diagrama 2}, we also define the irreducible representations
\[\chi_{\Lambda}'\, , \,
\chi_{\Lambda}^{0}\, , \,
\chi_{\Lambda}^{1},\ \text{ and } \,
\chi_{\Lambda}^{q}\]
of $Z_{d,n},S_{d,n},N_{d,n}$ and $\H(G,U)$ respectively, such that $d_{\tau_{0}}(\chi_{\Lambda}')=\chi_{\Lambda}^{0},\ d_{\tau_{1}}(\chi_{\Lambda}')=\chi_{\Lambda}^{1},\ d_{\phi_{1}}(\chi_{\Lambda})=\chi_{\Lambda}^{1}$, and $d_{\phi_{q}}(\chi_{\Lambda})=\chi_{\Lambda}^{q}$, where the last one is for $d=q-1$.
We also denote by $f_{\Lambda}$ and $g_{\Lambda}$ the scalars of Theorems~\ref{f_{chi}} and~\ref{g_{chi}} associated with $\chi_{\Lambda}$.
\end{notation}

\subsection{Computing \texorpdfstring{$g_{\Lambda}$}{g}}\label{g lambda}

To compute the value of the irreducible characters at $T_{0}^{2}$ it is necessary to compute $g_{\Lambda}$ and $f_{\Lambda}$ for each type. We 
start by computing $g_{\Lambda}$, to achieve this we 
use the description of $\Irr(S_{d,n})$ given in Subsection~\ref{rep normalizador}. 

\begin{notation}\label{notation psi}
In what follows, given the representation $\chi_{\Lambda}^{0}\in \Irr(S_{d,n})$, we 
always denote by $\psi \in C_{d}^{n}$ the tuple with which we construct $\chi_{\Lambda}^{0}$. We also denote  \[N(\psi)=C_{d}^{n}\rtimes {\fontshape{ui}\selectfont {\bf Stab} (\psi)}=\prod_{i}N_{i},\] and we denote by
\[\chi_{\Lambda}^{\psi}\in \Irr\pare{{\fontshape{ui}\selectfont {\bf Stab} (\psi)}}, \qquad 
\text{ and }\qquad 
\chi_{\Lambda}^{N\pare{\psi}}\in \Irr\pare{N\pare{\psi}}\]
the irreducible representations of ${\fontshape{ui}\selectfont {\bf Stab} (\psi)}$ and $N\pare{\psi}$ that we constructed in Subsection~\ref{rep normalizador}, before inducing. With this notation, we have that $\chi_{\Lambda}=\Ind_{N(\psi)}^{S_{d,n}}\chi_{\Lambda}^{N\pare{\psi}}$. Also observe that 
\[\chi_{\Lambda}^{N\pare{\psi}}(1)=\chi_{\Lambda}^{\psi}(1), \qquad   \text{and} \qquad  \corch{S_{d,n}:C_{d}^{n}\rtimes {\fontshape{ui}\selectfont {\bf Stab} (\psi)}}=\corch{W:{\fontshape{ui}\selectfont {\bf Stab} (\psi)}},\]
which implies that 
\begin{equation}
\chi_{\Lambda}(1)=\chi_{\Lambda}^{\psi}(1)\corch{W:{\fontshape{ui}\selectfont {\bf Stab} (\psi)}}. \label{N psi}
\end{equation}
From now on we will use the letter $s$ to denote a transposition in $\S_{n}$, and we will denote by $s_{i}$ the transposition $(i,i+1)$. We denote by $^{g}$ the conjugation by $g$. Also recall Notation~\ref{blocks ni}.
\end{notation}

\begin{notation}\label{alt}
For $\Lambda=(\lambda_{1},\ldots,\lambda_{k})$ a tuple of partitions, we denote
\[\Lambda!=\prod_{i=1}^{k}\size{\lambda_{i}}!,\quad \binom{n}{\Lambda}=\frac{n!}{\Lambda!},\quad \mathbb{S}^{\Lambda}(1)=\prod_{i=1}^{k}\mathbb{S}^{\lambda_{i}}(1),\quad \text{and} \quad  \size{\Lambda}^{\text{alt}}=\sum_{i=1}^{k}(-1)^{i-1}\size{\lambda_{i}},\]
where $n=\size{\Lambda}=\sum_{i=1}^{k}\size{\lambda_{i}}$.
\end{notation}

\begin{lemma} In type $A$, for $\Lambda=(\lambda_1,\ldots,\lambda_d)\in \mathcal{Q}_{d,n}$ we have $g_{\Lambda}=M_1M_2$, where $M_1$ and $M_2$ are the number of entries of $\psi$ that belong to $C_{d}^{2}$ and $C_{d}\backslash C_{d}^{2}$, respectively. \label{lema a}
\end{lemma}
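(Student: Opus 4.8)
The plan is to evaluate the formula for $g_{\Lambda}$ supplied by Theorem~\ref{teo g} directly against the explicit description of $\chi_{\Lambda}^{0}\in\Irr(S_{d,n}^{A})$ from Example~\ref{caracteres de N en tipo A}. Recall that
\[
g_{\Lambda}=\sum_{a_i}\frac{\chi_{\Lambda}^{0}(1)-\chi_{\Lambda}^{0}(t_{H_{a_i}}^{d/2})}{2\,\chi_{\Lambda}^{0}(1)},
\]
the sum ranging over the simple roots in a reduced word for $w_0$, counted with multiplicity. In type $A$ one has $H_{a_i}=E_{i,i}-E_{i+1,i+1}$, so $t_{H_{a_i}}^{d/2}=h_i(-1)$ is the element $z_i\in C_{d}^{n}$ carrying $-1$ in positions $i$ and $i+1$ and $1$ elsewhere. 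Since $z_i$ has order two, each summand is exactly the fraction of the representation space on which $z_i$ acts by $-1$, so the whole computation reduces to determining these multiplicities and adding them up.

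First I would decompose $\chi_{\Lambda}^{0}$ over the abelian part. Writing $\chi_{\Lambda}^{0}=\Ind_{N(\psi)}^{S_{d,n}^{A}}\chi_{\Lambda}^{N(\psi)}$ as in Example~\ref{caracteres de N en tipo A} and applying Clifford theory (Mackey restriction to the normal subgroup $C_{d}^{n}$), the restriction $\Res_{C_{d}^{n}}\chi_{\Lambda}^{0}$ is $\dim(\S^{\Lambda})$ copies of the $\mathbb{S}_n$-orbit of the character $\widehat{\psi}$ attached to the representative tuple $\psi$. On the summand ${}^{\sigma}\widehat{\psi}$ the element $z_i$ acts by the scalar ${}^{\sigma}\widehat{\psi}(z_i)=(-1)^{\psi_{\sigma^{-1}(i)}+\psi_{\sigma^{-1}(i+1)}}$, where $\psi_j$ denotes the exponent of the $j$-th entry of $\psi$. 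Thus $z_i$ acts by $-1$ precisely on those orbit characters indexed by a $\sigma$ for which exactly one of the positions $\sigma^{-1}(i),\sigma^{-1}(i+1)$ carries an entry lying outside $C_{d}^{2}$, the subgroup of squares (equivalently, of even exponent).

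Next I would count. As $\sigma$ ranges over $\mathbb{S}_n$ the ordered pair $(\sigma^{-1}(i),\sigma^{-1}(i+1))$ is equidistributed over ordered pairs of distinct positions, so the fraction of the orbit on which $z_i$ acts by $-1$ equals the proportion of ordered pairs of distinct positions exactly one of whose entries lies outside $C_{d}^{2}$. With $M_1$ and $M_2$ the number of entries of $\psi$ in $C_{d}^{2}$ and in $C_{d}\setminus C_{d}^{2}$ respectively, this proportion is $\tfrac{2M_1M_2}{n(n-1)}$, and, crucially, it does not depend on $i$. Hence
\[
\frac{\chi_{\Lambda}^{0}(1)-\chi_{\Lambda}^{0}(z_i)}{2\,\chi_{\Lambda}^{0}(1)}=\frac{2M_1M_2}{n(n-1)}\qquad\text{for every }i.
\]

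Finally, since every summand takes this common value, the sum depends only on the number of simple reflections in a reduced expression for $w_0$, namely the length $\ell(w_0)=\binom{n}{2}=\tfrac{n(n-1)}{2}$. This simultaneously reconfirms the well-definedness of $g_{\Lambda}$ predicted by Theorem~\ref{teo g} (the per-root terms being equal, the reduced-word ambiguity is harmless) and yields $g_{\Lambda}=\binom{n}{2}\cdot\tfrac{2M_1M_2}{n(n-1)}=M_1M_2$. I expect the only delicate point to be the equidistribution step that makes each per-root contribution independent of $i$: it is exactly this uniformity over the full $\mathbb{S}_n$-orbit that reduces the entire sum to a single length count, and verifying it carefully (including the passage between orbit characters and elements of $\mathbb{S}_n$ modulo $\textbf{Stab}(\psi)$) is where the substance of the argument lies.
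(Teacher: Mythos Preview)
Your proposal is correct and follows essentially the same approach as the paper. Both arguments reduce $g_{\Lambda}$ to $\binom{n}{2}$ times the single ratio $\bigl(\chi_{\Lambda}^{0}(1)-\chi_{\Lambda}^{0}(t_1^{d/2}t_2^{d/2})\bigr)/\bigl(2\chi_{\Lambda}^{0}(1)\bigr)$ and then perform the same counting of ordered pairs of positions according to whether their $\psi$-entries lie in $C_d^2$ or not; the only cosmetic difference is that the paper invokes conjugacy of the $t_{H_{a_i}}^{d/2}$ up front and applies the Frobenius formula for the induced character directly, whereas you phrase the same computation as restricting to $C_d^n$ via Clifford theory and reading off the $(-1)$-eigenspace fraction.
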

\begin{proof}
Using the decomposition of $w_{0}$ from Subsection~\ref{basic prop}, we know that there are $\textstyle \binom{n}{2}$ simple roots in the decomposition of $w_{0}$, and that  $\textstyle{t_{H_{a_i}}^{d/2}=t_{i}^{d/2}t_{i+1}^{d/2}}$. Since these elements are conjugated in $C_{d}^{n} \rtimes \S_{n}$, we have
\begin{align}
    g_{\Lambda} = 
    \binom{n}{2}\frac{\chi_{\Lambda}^{0}(1)-\chi_{\Lambda}^{0}(t_{1}^{d/2}t_{2}^{d/2})}{2  \chi_{\Lambda}^{0}(1)}. \label{3p}
\end{align}


Since $t_{1}^{d/2}t_{2}^{d/2}$ belongs to $C_{d}^{n}$, it is invariant under conjugation by elements of $C_{d}^{n}$. Therefore, we have
\begin{equation}\chi_{\Lambda}^{0}(t_{1}^{d/2}t_{2}^{d/2})=\Ind_{\prod N_i}^{N}(t_{1}^{d/2}t_{2}^{d/2})=\frac{1}{\Lambda!}\underset{s \in \S_{n}}{\sum}\chi_{\Lambda}^{N(\psi)}(^{s}t_{1}^{d/2}t_{2}^{d/2}),\label{34}
\end{equation}
where $^{s}$ refers to the conjugation by $s$. When we conjugate $t_{1}^{d/2}t_{2}^{d/2}$ by an element of $\S_{n}$ we obtain an element of the form $t_{i}^{d/2}t_{j}^{d/2}.$ The value of $\chi_{\Lambda}^{N\pare{\psi}}$ at this element gives us 
 $\chi_{\Lambda}^{\psi}(1)$ if both or neither of the coordinates $i$ and $j$ correspond to a square in $C_{d}$, and otherwise it gives us $-\chi_{\Lambda}^{\psi}(1)$.
Replacing this in Equation~\eqref{34} leads to
\[\chi_{\Lambda}^{0}(t_{1}^{d/2}t_{2}^{d/2})=\frac{\chi_{\Lambda}^{\psi}(1)}{\Lambda!}(K_1-K_2),\]
where $K_1$ is the number of ways to conjugate $t_{1}^{d/2}t_{2}^{d/2}$ by an element of $\S_{n}$ and obtain an element with both or neither of the coordinates corresponding to elements of $C_{d}^{2}$, and $K_{2}$ is the number of ways to conjugate $t_{1}^{d/2}t_{2}^{d/2}$ by an element of $\S_{n}$ and obtain an element with exactly one of the coordinates corresponding to an element of $C_{d}^{2}$. It is straightforward to see that
\[K_{1}=2\left(\textstyle 
\binom{M_1}{2}+\binom{M_2}{2}\right)(n-2)!, \qquad \text{and} \qquad K_{2}=2M_{1}M_2(n-2)!.\]
On the other hand, we have 
\[\frac{n!  \chi_{\Lambda}^{\psi}(1)}{\Lambda!}=\chi_{\Lambda}^{0}(1).\]  
Therefore, we obtain
\[\frac{\chi_{\Lambda}^{0}(t_{1}^{d/2}t_{2}^{d/2})}{\chi_{\Lambda}^{0}(1)}=\frac{\binom{M_1}{2}+\binom{M_2}{2}-M_1 M_2}{\binom{n}{2}}.\]
Replacing $\textstyle \binom{n}{2}=\binom{M_1}{2}+\binom{M_2}{2}+M_1M_2$
in Equation~\eqref{3p}, we obtain the result.
\end{proof}

\begin{lemma}\label{restringir a Sl el g}
Let $\mu \in \Irr(S_{d,n}^{A^{\ast}})$ be a factor of the restriction of $\chi_{\Lambda}^{0}\in \Irr(S_{d,n}^{A})$, for $\Lambda \in \mathcal{Q}_{d,n} $. Then $g_{\mu}=g_{\Lambda}$.
\end{lemma}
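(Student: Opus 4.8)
The plan is to collapse the defining sum for $g$ onto a single conjugacy class and then to compare $\mu$ with $\chi_{\Lambda}^{0}$ through the Clifford theory of Subsection~\ref{caso ciclico} applied to the cyclic quotient $S_{d,n}^{A}/S_{d,n}^{A^{\ast}}\cong C_{d}$.

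First I would note, exactly as in the proof of Lemma~\ref{lema a}, that each simple root $a_{i}$ occurring in a reduced expression of $w_{0}$ contributes $t_{H_{a_i}}^{d/2}=t_{a}^{d/2}t_{a+1}^{d/2}$ for suitable adjacent indices, and that all these elements are conjugate, via a permutation in $\S_{n}\subseteq S_{d,n}^{A^{\ast}}$, to $h:=t_{1}^{d/2}t_{2}^{d/2}$. Since character values are class functions and $w_{0}$ has length $\binom{n}{2}$, for every $\nu\in\Irr(S_{d,n}^{A^{\ast}})$, and likewise for $\chi_{\Lambda}^{0}\in\Irr(S_{d,n}^{A})$ (where this is already Equation~\eqref{3p}), the sum in Equation~\eqref{g ch} reduces to
\[
g_{\nu}=\binom{n}{2}\,\frac{\nu(1)-\nu(h)}{2\,\nu(1)}.
\]
Thus the lemma reduces to the single equality of normalized character values $\mu(h)/\mu(1)=\chi_{\Lambda}^{0}(h)/\chi_{\Lambda}^{0}(1)$.

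Next I would invoke Subsection~\ref{caso ciclico} for $S_{d,n}^{A^{\ast}}\unlhd S_{d,n}^{A}$: as $\mu$ is an irreducible constituent of $\restr{\chi_{\Lambda}^{0}}{S_{d,n}^{A^{\ast}}}$, we have $\restr{\chi_{\Lambda}^{0}}{S_{d,n}^{A^{\ast}}}=\sum_{j=1}^{t}\mu^{g_{j}}$, the $\mu^{g_{j}}$ being the $t$ distinct conjugates of $\mu$; evaluating at $1$ gives $\chi_{\Lambda}^{0}(1)=t\,\mu(1)$, all conjugates having the same degree. The crux is that $h$ lies in the abelian subgroup $C_{d}^{n}$, and that the transversal $\{g_{j}\}$ may be chosen inside $C_{d}^{n}$: indeed $S_{d,n}^{A}=C_{d}^{n}\cdot S_{d,n}^{A^{\ast}}$, because $\S_{n}\subseteq S_{d,n}^{A^{\ast}}$, so $C_{d}^{n}$ surjects onto the quotient of $S_{d,n}^{A}$ by the stabilizer $I\supseteq S_{d,n}^{A^{\ast}}$ of $\mu$ under conjugation, whence every coset affording a distinct conjugate meets $C_{d}^{n}$. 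With $g_{j},h\in C_{d}^{n}$ and $C_{d}^{n}$ abelian we get $\mu^{g_{j}}(h)=\mu(g_{j}^{-1}hg_{j})=\mu(h)$, hence $\chi_{\Lambda}^{0}(h)=\sum_{j=1}^{t}\mu^{g_{j}}(h)=t\,\mu(h)$.

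Dividing, $\chi_{\Lambda}^{0}(h)/\chi_{\Lambda}^{0}(1)=t\mu(h)/(t\mu(1))=\mu(h)/\mu(1)$, which by the reduction of the first paragraph yields $g_{\mu}=g_{\Lambda}$. I expect the only delicate point to be the choice of the transversal inside $C_{d}^{n}$: the conjugate $\mu^{g}$ depends only on the coset of $g$ modulo $I$, so one must verify that each such coset contains an element of $C_{d}^{n}$, which is precisely the factorization $S_{d,n}^{A}=C_{d}^{n}\cdot S_{d,n}^{A^{\ast}}$ together with $S_{d,n}^{A^{\ast}}\subseteq I$. Everything else is the class-function reduction to $h$ and a degree count.
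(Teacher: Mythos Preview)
Your proof is correct and follows essentially the same route as the paper: reduce the sum defining $g$ to the single element $h=t_{1}^{d/2}t_{2}^{d/2}$ and use Clifford theory to show $\chi_{\Lambda}^{0}(h)/\chi_{\Lambda}^{0}(1)=\mu(h)/\mu(1)$. The only minor difference is in how you justify that all the conjugate characters $\mu^{g_{j}}$ agree at $h$: you choose the transversal inside the abelian group $C_{d}^{n}$ so that $h$ is literally fixed under conjugation, whereas the paper instead observes that every $S_{d,n}^{A}$-conjugate of $h$ is already $S_{d,n}^{A^{\ast}}$-conjugate to $h$; both arguments yield $\mu^{g_{j}}(h)=\mu(h)$ and the conclusion follows identically.
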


\begin{proof}\normalfont 
From Example~\ref{Sl_{n} caracteres}, we know that $(\chi_{\Lambda}^{0})'$ has $d/o(\Lambda)$ factors in its decomposition. Then, we have \begin{align}\chi_{\Lambda}^{0} (1)=(\chi_{\Lambda}^{0})'(1)=d \mu(1)/o(\Lambda).\label{ecuacion restringir 1} \end{align}
On the other hand, all the conjugates of $t_{i}^{d/2}t_{i+1}^{d/2}$ are of the same form, and 
are all conjugated in  $S_{d,n}^{A^{\ast}}$. Therefore, we have
\begin{align}\chi_{\Lambda}^{0}(t_{1}^{d/2}t_{2}^{d/2})=(\chi_{\Lambda}^{0})'(t_{1}^{d/2}t_{2}^{d/2})=\sum_{i=1}^{d/o(\Lambda)}\mu^{g_{i}}(t_{1}^{d/2}t_{2}^{d/2})=\frac{d}{o(\Lambda)}\mu(t_{1}^{d/2}t_{2}^{d/2}),\label{ecuacion restringir 2} \end{align}
where the sum is taken over the conjugated characters of $\mu$. Then, replacing Equations~\eqref{ecuacion restringir 1} and~\eqref{ecuacion restringir 2} in Equation~\eqref{g ch}, we obtain the result.
\end{proof}

\begin{lemma}In type $B$, for $\Lambda=(\lambda_1,\mu_{1},\lambda_2,\mu_{2},\lambda_{3},\ldots, \lambda_{(d+2)/2})\in \mathcal{R}_{d,n}$ we have $g_{\Lambda}=2N_1N_2$, where $N_1$ and $N_2$ are the number of entries of $\psi$ that belong to $C_{d}^{2}$ and $C_{d}\backslash C_{d}^{2}$, respectively. \label{g en tipo B}
\end{lemma}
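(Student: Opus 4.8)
The plan is to apply Theorem~\ref{teo g} to the character $\chi_{\Lambda}$, following the strategy of the type $A$ computation in Lemma~\ref{lema a} but keeping track of the two $W_n$-orbits of roots present in type $B$. First I would determine which simple reflections occur in a reduced word of $w_0=-\Id$, with multiplicities. Since $w_0$ sends every positive root to a negative one, its inversion set is the full set of positive roots of type $B$, of which there are $n^2$, the length of $w_0$. Using the standard fact that in a reduced word $w_0=s_{i_1}\cdots s_{i_k}$ each inversion $s_{i_1}\cdots s_{i_{j-1}}(a_{i_j})$ lies in the $W_n$-orbit of $a_{i_j}$, the number of letters of a given conjugacy class is independent of the chosen reduced word and equals the number of positive roots in the corresponding orbit. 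Hence the long simple reflections $s_1,\dots,s_{n-1}$ (roots $e_i-e_{i+1}$) account for $m_s=2\binom{n}{2}=n(n-1)$ terms of the sum, and the short simple reflection $t$ (root $e_1$) for $m_t=n$ terms, with $m_s+m_t=n^2$.

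Second, I would discard the short-root terms. The coroot of the short simple root $e_1$ is $2e_1$, so $t_{H_a}=t_1^{2}$ and therefore $t_{H_a}^{d/2}=t_1^{d}=1$ in $S_{d,n}^{B}$; each such term is $\frac{\chi_{\Lambda}^{0}(1)-\chi_{\Lambda}^{0}(1)}{2\chi_{\Lambda}^{0}(1)}=0$. For the long simple roots one has $t_{H_{a_i}}^{d/2}=t_i^{d/2}t_{i+1}^{d/2}$, and these elements are all $W_n$-conjugate (a coordinate permutation moves one to another, while the sign flips fix them), so $\chi_{\Lambda}^{0}(t_{H_{a_i}}^{d/2})$ is the constant value $\chi_{\Lambda}^{0}(t_1^{d/2}t_2^{d/2})$. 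This reduces the formula to
\[
g_{\Lambda}=n(n-1)\,\frac{\chi_{\Lambda}^{0}(1)-\chi_{\Lambda}^{0}(t_1^{d/2}t_2^{d/2})}{2\chi_{\Lambda}^{0}(1)}.
\]

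The remaining step is the character computation, a variant of Lemma~\ref{lema a}. Writing $\chi_{\Lambda}^{0}=\Ind_{N(\psi)}^{S_{d,n}^{B}}\chi_{\Lambda}^{N(\psi)}$ and setting $g=t_1^{d/2}t_2^{d/2}\in C_d^{n}$, I would use that $g$ is normal and invariant under conjugation by $C_d^{n}$ and by all sign flips, so the induced-character formula collapses to
\[
\chi_{\Lambda}^{0}(g)=\frac{\chi_{\Lambda}^{\psi}(1)}{\size{\textbf{Stab}(\psi)}}\,2^{n}\sum_{\sigma\in \S_n}\varepsilon_{\sigma^{-1}(1)}\varepsilon_{\sigma^{-1}(2)},
\]
where $\varepsilon_j=\pm1$ is the value of the $j$-th component of $\psi$ on the order-two element $t_g^{d/2}$, equal to $+1$ exactly when the $j$-th entry of $\psi$ lies in $C_d^{2}$, and the factor $2^{n}$ records the sign flips acting trivially on $g$. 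Since $\sum_j\varepsilon_j=N_1-N_2$ and $\sum_j\varepsilon_j^{2}=n$, the inner sum equals $(n-2)!\big((N_1-N_2)^2-n\big)$. Dividing by $\chi_{\Lambda}^{0}(1)=\chi_{\Lambda}^{\psi}(1)\,2^{n}n!/\size{\textbf{Stab}(\psi)}$, obtained from Equation~\eqref{N psi}, cancels the $2^{n}$ and yields
\[
\frac{\chi_{\Lambda}^{0}(g)}{\chi_{\Lambda}^{0}(1)}=\frac{(N_1-N_2)^2-n}{n(n-1)}.
\]
Substituting into the displayed expression for $g_{\Lambda}$ and using $n=N_1+N_2$, so that $n^{2}-(N_1-N_2)^2=4N_1N_2$, gives $g_{\Lambda}=\tfrac12\big(n^2-(N_1-N_2)^2\big)=2N_1N_2$.

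I expect the main obstacle to be the bookkeeping in the first two steps rather than the final arithmetic: one must justify that the per-orbit count of simple reflections in a reduced word of $w_0$ is well defined (the inversion-set argument) and correctly identify the coroot $2e_1$ of the short simple root, which is precisely what forces the short-root contribution to vanish and distinguishes type $B$ from type $C$, where the special root is long and does contribute. Once the counts $m_s=n(n-1)$, $m_t=n$ and the vanishing are in place, the character evaluation is a routine adaptation of Lemma~\ref{lema a}, the only genuinely new feature being the factor $2^{n}$ from the sign flips, which cancels between numerator and denominator.
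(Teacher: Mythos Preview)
Your proof is correct and follows essentially the same approach as the paper: both identify that the short-root terms vanish because $t_{H_{a_1}}^{d/2}=t_{2H_1}^{d/2}=1$, reduce to the formula $g_{\Lambda}=(n^2-n)\frac{\chi_{\Lambda}^{0}(1)-\chi_{\Lambda}^{0}(t_1^{d/2}t_2^{d/2})}{2\chi_{\Lambda}^{0}(1)}$, and then evaluate the induced character using invariance under $C_d^n$ and $\{\pm 1\}^n$. The only differences are presentational: you justify the per-orbit reflection count via the inversion-set argument and carry out the character sum explicitly with the $\varepsilon_j$ notation, whereas the paper simply asserts the count $n^2-n$ and defers the final computation to Lemma~\ref{lema a}, noting the extra factor of two.
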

\begin{proof}
We have that $t_{H_{a_1}}^{d/2}=t_{2H_1}^{d/2}=1$, then $\chi_{\Lambda}^{0}(t_{H_{a_1}}^{d/2})=\chi_{\Lambda}^{0}(1)$. We also have that $t_{H_{a_i}}^{d/2}=t_{i}^{d/2}t_{i+1}^{d/2}$ for $i>1$, and there are $n^{2}-n$ of these simple roots in the decomposition of $w_{0}$. Since all these elements are conjugated in  $S_{d,n}^{B}$, we have that 
\begin{equation}g_{\Lambda}=(n^{2}-n)\frac{\chi_{\Lambda}^{0}(1)-\chi_{\Lambda}^{0}(t_{1}^{d/2}t_{2}^{d/2})}{2\chi_{\Lambda}^{0}(1)}\label{n^{2}-n}.\end{equation}
Since $C_{d}^{n}$ is abelian, the element $t_{1}^{d/2}t_{2}^{d/2}$ is stable under conjugation by elements of $C_{d}^{n}$. It is also stable under conjugation by elements of $\set{\pm 1}^{n}\subset W_{n}$, since the conjugation by these elements consists of inverting some elements of the tuple. Therefore, we have
\begin{equation}\label{ñ1}
\chi_{\Lambda}^{0}(t_{1}^{d/2}t_{2}^{d/2})=\frac{2^{n}}{\size{{\fontshape{ui}\selectfont {\bf Stab} (\psi)}}}\underset{s \in \S_{n}}{\sum}\chi_{\Lambda}^{N(\psi)}(^{s}t_{1}^{d/2}t_{2}^{d/2}),
\end{equation}
where $\chi_{\Lambda}^{N(\psi)}$ is extended as $0$ outside $N(\psi)$. 
 On the other hand, we have
\begin{equation}\label{ñ2}
\chi_{\Lambda}^{0}(1)=\chi_{\Lambda}^{\psi}(1)\frac{2^{n} n!}{\size{{\fontshape{ui}\selectfont {\bf Stab} (\psi)}}}.
\end{equation}
Replacing Equations~\eqref{ñ1} and~\eqref{ñ2} inside Equation~\eqref{n^{2}-n}, we obtain 
\[g_{\Lambda}=\frac{n^{2}-n}{2}\pare{1-\frac{1}{n!\ \chi_{\Lambda}^{\psi}(1)}\ \underset{s \in \S_{n}}{\sum}\chi_{\Lambda}^{N(\psi)}\pare{^{s}t_{1}^{d/2}t_{2}^{d/2}}}.\]
This is computed in the same way as in Lemma~\ref{lema a}, but in this case, the result is multiplied by two since we are multiplying by $n^{2}-n$ instead of $\textstyle \binom{n}{2}$. This completes the proof.
\end{proof}
\begin{lemma}
Let $\mu\in \Irr(S_{d,n}^{D})$ be a factor of the restriction of $\chi_{\Lambda}^{0}\in \Irr(S_{d,n}^{B})$. Then $g_{\mu}=g_{\Lambda}$.
\end{lemma}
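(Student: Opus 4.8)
The plan is to follow the template of Lemma~\ref{restringir a Sl el g}, using that the invariant in Equation~\eqref{g ch} depends on $\chi_{0}$ only through the ratios $\chi_{0}(t_{H_{a_i}}^{d/2})/\chi_{0}(1)$. Since $S_{d,n}^{D}$ is normal of index two in $S_{d,n}^{B}=C_{d}^{n}\rtimes W_{n}$, the quotient is cyclic and Subsection~\ref{caso ciclico} gives
\[\restr{\chi_{\Lambda}^{0}}{S_{d,n}^{D}}=\sum_{j=1}^{t}\mu^{g_{j}},\]
where $\mu^{g_{1}},\ldots,\mu^{g_{t}}$ are the $t\in\{1,2\}$ conjugates of $\mu$, all of the same dimension. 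Evaluating at the identity gives $\chi_{\Lambda}^{0}(1)=t\,\mu(1)$.

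The key step is to prove $\chi_{\Lambda}^{0}(t_{j}^{d/2}t_{k}^{d/2})=t\,\mu(t_{j}^{d/2}t_{k}^{d/2})$ for every pair $j\neq k$. I would first record that each $t_{H_{a_i}}^{d/2}$ arising from a type-$D$ simple root has this shape: for $a_{i}=s_{i}$ it equals $t_{i}^{d/2}t_{i+1}^{d/2}$, and for the extra root $u$ (with $H_{u}=H_{1}+H_{2}$) it equals $t_{1}^{d/2}t_{2}^{d/2}$. Such elements lie in $C_{d}^{n}\subseteq S_{d,n}^{D}$, so the restriction formula applies and $\chi_{\Lambda}^{0}(t_{j}^{d/2}t_{k}^{d/2})=\sum_{j'}\mu^{g_{j'}}(t_{j}^{d/2}t_{k}^{d/2})$. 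Because $C_{d}^{n}$ is abelian, conjugating $t_{j}^{d/2}t_{k}^{d/2}$ by any element of $S_{d,n}^{B}$ amounts to conjugating by its Weyl part $w\in W_{n}$, which sends it to $t_{\sigma(j)}^{d/2}t_{\sigma(k)}^{d/2}$ for $\sigma$ the underlying permutation of $w$; the sign changes performed by $w$ are immaterial since $t^{-d/2}=t^{d/2}$. As $\mathbb{S}_{n}\subseteq W_{n}'$ acts transitively on pairs, $t_{\sigma(j)}^{d/2}t_{\sigma(k)}^{d/2}$ is $S_{d,n}^{D}$-conjugate to $t_{j}^{d/2}t_{k}^{d/2}$, so every $\mu^{g_{j'}}$ takes the value $\mu(t_{j}^{d/2}t_{k}^{d/2})$ there. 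This yields $\chi_{\Lambda}^{0}(t_{j}^{d/2}t_{k}^{d/2})=t\,\mu(t_{j}^{d/2}t_{k}^{d/2})$, and hence
\[r:=\frac{\chi_{\Lambda}^{0}(1)-\chi_{\Lambda}^{0}(t_{j}^{d/2}t_{k}^{d/2})}{2\,\chi_{\Lambda}^{0}(1)}=\frac{\mu(1)-\mu(t_{j}^{d/2}t_{k}^{d/2})}{2\,\mu(1)},\]
a quantity that is the same for $\chi_{\Lambda}^{0}$ and $\mu$ and is independent of the chosen pair $\{j,k\}$.

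It remains to compare the two invariants. By Lemma~\ref{g en tipo B} one has $g_{\Lambda}=(n^{2}-n)\,r$, since in type $B$ the $n$ occurrences of the short root $a_{1}$ satisfy $t_{H_{a_1}}^{d/2}=1$ and contribute nothing. On the other hand $g_{\mu}$ is the sum of the terms associated with the $n^{2}-n$ simple reflections in a reduced expression of $w_{0}$ in type $D$; each contributes the common value $r$ by the previous paragraph, so $g_{\mu}=(n^{2}-n)\,r=g_{\Lambda}$.

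The main obstacle is exactly this last comparison: the reduced words for $w_{0}$ have different lengths in types $B$ and $D$ (namely $n^{2}$ versus $n^{2}-n$), so $g_{\Lambda}$ and $g_{\mu}$ are a priori sums over different index sets. The point to verify carefully is that the surplus of $n$ terms in type $B$ is precisely the contribution of the short simple root $a_{1}$, which vanishes because $t_{H_{a_1}}^{d/2}=1$; once this is checked, both invariants collapse to the same multiple of the single ratio $r$, and the equality $g_{\mu}=g_{\Lambda}$ follows.
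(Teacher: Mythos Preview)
Your proof is correct and follows essentially the same approach as the paper's: both reduce $g_{\mu}$ and $g_{\Lambda}$ to the same expression $(n^{2}-n)\,r$ by showing that the ratios $\chi_{\Lambda}^{0}(t_{j}^{d/2}t_{k}^{d/2})/\chi_{\Lambda}^{0}(1)$ and $\mu(t_{j}^{d/2}t_{k}^{d/2})/\mu(1)$ coincide, using that the $S_{d,n}^{B}$-conjugates of $t_{j}^{d/2}t_{k}^{d/2}$ are already $S_{d,n}^{D}$-conjugate to it (the paper phrases this as the conjugating element $t\in\{\pm 1\}^{n}$ fixing $t_{1}^{d/2}t_{2}^{d/2}$, which is a special case of your observation). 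Your final paragraph correctly handles the bookkeeping that the $n$ extra type-$B$ terms vanish because $t_{H_{a_{1}}}^{d/2}=1$, exactly as in Lemma~\ref{g en tipo B}.
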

\begin{proof}
We know that $t_{H_{a_1}}^{d/2}=t_{H_1+H_2}^{d/2}=t_1^{d/2}t_2^{d/2}$, while $t_{H_{a_i}}^{d/2}=t_{i}^{d/2}t_{i+1}^{d/2}$ for $i>1$. There are $n^{2}-n$ of these simple roots in the decomposition of $w_{0}$, and since all these elements are conjugated in $S_{d,n}^{D}$ we have that
\begin{equation}\label{p3}
g_{\mu}=(n^{2}-n)\frac{\mu(1)-\mu(t_{1}^{d/2}t_{2}^{d/2})}{2\mu(1)}.
\end{equation}
If $\mu$ is the restriction of $\chi_{\Lambda}^{0}$ we get the equality. Assume now that $\mu$ is one of the two factors of the restriction of $\chi_{\Lambda}^{0}$, and let $\eta$ be the other factor.
We know by  Subsection~\ref{caso ciclico} that the characters $\eta$ and $\mu$ are conjugated by some element  $t \in \set{\pm 1}^{n}$. Then, we have $\eta=\mu^{t}$ and 
\begin{equation}\label{p1}
\chi_{\Lambda}^{0}(t_{1}^{d/2}t_{2}^{d/2})=(\chi_{\Lambda}^{0})'(t_{1}^{d/2}t_{2}^{d/2})=\mu(t_{1}^{d/2}t_{2}^{d/2})+\mu^{t}(t_{1}^{d/2}t_{2}^{d/2})=2\mu(t_{1}^{d/2}t_{2}^{d/2}),
\end{equation}
where the last equality holds since $t$ acts by conjugation inverting the elements of the tuples in $C_{d}^{n}$, and therefore leaves $t_{1}^{d/2}t_{2}^{d/2}$ invariant. Doing an analogous argument, we have 
\begin{equation}\label{p2}
\chi_{\Lambda}^{0}(1)=2\mu(1).
\end{equation}
Replacing Equations~\eqref{p1} and~\eqref{p2} in Equation~\eqref{p3}, we obtain
\[g_{\mu}=(n^{2}-n)\frac{\chi_{\Lambda}^{0}(1)-\chi_{\Lambda}^{0}(t_{1}^{d/2}t_{2}^{d/2})}{2\chi_{\Lambda}^{0}(1)}=g_{\Lambda}.\]
\end{proof}

\begin{lemma}
\label{g en tipo C 2}
In type $C$, for $\Lambda=(\lambda_{1},\mu_{1},\lambda_{2},\mu_{2},\lambda_{3},\ldots ,\lambda_{(d+2)/2})\in \mathcal{R}_{d,n}$ we have that $g_{\Lambda}=2N_{1}N_{2}+N_{2}$, where $N_1$ and $N_2$ are the number of entries of $\psi$ that belong to $C_{d}^{2}$ and $C_{d}\backslash C_{d}^{2}$, respectively.
\end{lemma}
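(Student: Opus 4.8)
The plan is to split the sum defining $g_{\Lambda}$ in Equation~\eqref{g ch} according to the two root lengths of type $C$, and to show that the short roots reproduce the type $B$ answer while the long roots supply the extra summand $N_{2}$. In $C_{n}$ there is a single long simple root, whose reflection is the generator $t$ (the reflection in $2e_{1}$), and its coroot is $H_{a_{1}}=H_{1}$; the remaining simple roots are the short roots $e_{i}-e_{i+1}$, whose reflections are the transpositions $s_{i}$ and whose coroots are $H_{i}-H_{i+1}$. Consequently $t_{H_{a_{1}}}^{d/2}=t_{1}^{d/2}\neq 1$ for the long root --- this is the only difference from type $B$, where the special root is short and $t_{H_{a_{1}}}^{d/2}=1$ --- while the short roots give $t_{H_{a_{i}}}^{d/2}=t_{i}^{d/2}t_{i+1}^{d/2}$ exactly as before.

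First I would fix the multiplicities of the two kinds of letters in a reduced word $w_{0}=s_{i_{1}}\cdots s_{i_{k}}$. Using the standard bijection $j\mapsto \beta_{j}=s_{i_{1}}\cdots s_{i_{j-1}}(a_{i_{j}})$ between the letters and the positive roots, together with the fact that the Weyl group preserves root length, the number of letters whose simple root is long equals the number of long positive roots, and similarly for the short ones. Since $C_{n}$ has exactly $n$ long positive roots $2e_{j}$ and $n^{2}-n$ short positive roots $e_{i}\pm e_{j}$, the long generator $t$ occurs $n$ times and the short generators occur $n^{2}-n$ times, independently of the chosen reduced expression. As all the short $t_{H_{a_{i}}}^{d/2}$ are conjugate to $t_{1}^{d/2}t_{2}^{d/2}$ in $S_{d,n}^{C}$, the short block of the sum coincides verbatim with the quantity evaluated in Lemma~\ref{g en tipo B}, contributing $2N_{1}N_{2}$.

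It remains to treat the $n$ long letters, all of whose $t_{H_{a_{j}}}^{d/2}=t_{j}^{d/2}$ are conjugate to $t_{1}^{d/2}$; so I must compute $\chi_{\Lambda}^{0}(t_{1}^{d/2})$. Writing $\chi_{\Lambda}^{0}=\Ind_{N(\psi)}^{S_{d,n}^{C}}\chi_{\Lambda}^{N(\psi)}$ and noting that $t_{1}^{d/2}\in C_{d}^{n}$ is invariant under conjugation by $C_{d}^{n}$ (abelian) and by $\set{\pm1}^{n}$ (it equals its own inverse), the induced-character formula collapses to $\frac{2^{n}}{\size{\mathbf{Stab}(\psi)}}\sum_{s\in\S_{n}}\chi_{\Lambda}^{N(\psi)}({}^{s}t_{1}^{d/2})$, exactly as in Lemma~\ref{g en tipo B} but with the single element $t_{1}^{d/2}$ in place of $t_{1}^{d/2}t_{2}^{d/2}$. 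Each conjugate is some $t_{k}^{d/2}\in C_{d}^{n}$, on which $\chi_{\Lambda}^{N(\psi)}$ takes the value $\psi_{k}(t_{g}^{d/2})\,\chi_{\Lambda}^{\psi}(1)$, where $\psi_{k}$ is the block character carried by position $k$; here $\psi_{k}(t_{g}^{d/2})=+1$ precisely when $\psi_{k}\in C_{d}^{2}$ and $-1$ otherwise. Summing over $\S_{n}$ (each position $k$ reached $(n-1)!$ times) and using $\chi_{\Lambda}^{0}(1)=\chi_{\Lambda}^{\psi}(1)\,[W_{n}:\mathbf{Stab}(\psi)]$ from Equation~\eqref{N psi} gives $\chi_{\Lambda}^{0}(t_{1}^{d/2})/\chi_{\Lambda}^{0}(1)=(N_{1}-N_{2})/n$. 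Feeding this into the long block $n\cdot\frac{\chi_{\Lambda}^{0}(1)-\chi_{\Lambda}^{0}(t_{1}^{d/2})}{2\chi_{\Lambda}^{0}(1)}$ and using $N_{1}+N_{2}=n$ yields $\tfrac{1}{2}\bigl(n-(N_{1}-N_{2})\bigr)=N_{2}$, so that $g_{\Lambda}=2N_{1}N_{2}+N_{2}$.

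The induced-character bookkeeping is routine, being a direct transcription of Lemma~\ref{g en tipo B}. The one genuinely new point --- and the step I would be most careful to justify --- is the count of long versus short letters in $w_{0}$: everything hinges on the root-length invariance of those multiplicities, which I would state and invoke explicitly rather than rely on any single reduced word.
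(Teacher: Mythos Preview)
Your proof is correct and follows essentially the same route as the paper's: split $g_{\Lambda}$ into the short-root contribution (handled by Lemma~\ref{g en tipo B}) and the long-root contribution, then evaluate $\chi_{\Lambda}^{0}(t_{1}^{d/2})$ via the induced-character formula using invariance under $C_{d}^{n}$ and $\{\pm1\}^{n}$. The only substantive addition is your explicit justification of the letter multiplicities $n$ and $n^{2}-n$ through the bijection with positive roots and length invariance, which the paper simply asserts.
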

\begin{proof}
We have that $t_{H_{a_1}}^{d/2}=t_{H_1}^{d/2}=t_{1}^{d/2}$. For $i>1$ we have that  $t_{H_{a_i}}^{d/2}=t_{i}^{d/2}t_{i+1}^{d/2}$, and all these elements are conjugated in $S_{d,n}^{C}$. There are respectively $n$ and $n^{2}-n$ simple roots of each type in the decomposition of $w_{0}$, therefore we have 
\begin{equation}
g_{\Lambda}=(n^{2}-n) \frac{\chi_{\Lambda}^{0}(1)-\chi_{\Lambda}^{0}(t_{1}^{d/2}t_{2}^{d/2})}{2\chi_{\Lambda}^{0}(1)}+ n \frac{\chi_{\Lambda}^{0}(1)-\chi_{\Lambda}^{0}(t_{1}^{d/2})}{2\chi_{\Lambda}^{0}(1)}. \label{g en tipo C}
\end{equation}
The first term on the right-hand side of Equation~\eqref{g en tipo C} can be computed in the same way as in Lemma~\ref{g en tipo B}, and is equal to $2N_{1}N_2$.
Now we will compute $\chi_{\Lambda}^{0}(t_{1}^{d/2})$. The element $t_{1}^{d/2}$ is stable under conjugation by elements of $C_{d}^{n}$ and 
by elements of $\{\pm 1\}^{n}\subset W_{n}$, therefore we have
\begin{equation*}
\chi_{\Lambda}^{0}(t_{1}^{d/2})=\Ind_{\prod N_{i}}^{N}\chi_{\Lambda}^{N(\psi)}(t_{1}^{d/2})=\frac{2^{n}}{\size{{\fontshape{ui}\selectfont {\bf Stab} (\psi)}}}\sum_{s\in \S_{n}}\chi_{\Lambda}^{N(\psi)}(^{s}t_{1}^{d/2}),
\end{equation*}
where we extend $\chi_{\Lambda}^{N(\psi)}$ as $0$ outside $N(\psi)$. When conjugating $t_{1}^{d/2}$, we obtain an element of the form $t_{j}^{d/2}$, and the value of $\chi_{\Lambda}^{N(\psi)}$ at this element is equal to $\chi_{\Lambda}^{\psi}(1)$ if $j$ corresponds to a coordinate in $C_{d}^{2}$  and is equal to $-\chi_{\Lambda}^{\psi}(1)$ otherwise. The number of permutations that fall into the first and second case are $(n-1)!N_{1}$, and $(n-1)!N_{2}$, respectively. Hence, we have
\begin{equation}\label{f3}
\chi_{\Lambda}^{0}(t_{1}^{d/2})=\frac{2^{n}\chi_{\Lambda}^{\psi}(1)}{\size{{\fontshape{ui}\selectfont {\bf Stab} (\psi)}}}(n-1)!(N_{1}-N_{2}).
\end{equation}
Replacing Equation~\eqref{f3} in Equation~\eqref{g en tipo C}, and using that 
\[\frac{2^{n}n! \chi_{\Lambda}^{\psi}(1)}{\size{{\fontshape{ui}\selectfont {\bf Stab} (\psi)}} }=\chi_{\Lambda}^{0}(1),\] 
we obtain that
\[n  \frac{\chi_{\Lambda}^{0}(1)-\chi_{\Lambda}^{0}(t_{1}^{d/2})}{2\chi_{\Lambda}^{0}(1)}=\frac{n-(N_{1}-N_{2})}{2}=\frac{(N_{1}+N_{2})-(N_{1}-N_{2})}{2}=N_{2}.
\]
Hence, we obtain the equality of the statement.
\end{proof}

\subsection{Computing \texorpdfstring{$f_{\Lambda}$}{f}}\label{f lambda}

Now we will compute the value of $f_{\Lambda}$ for each type, using the results of Subsection~\ref{rep normalizador}. We start by defining $k_{\Lambda}$ as
\begin{equation}\label{k lam}
k_{\Lambda}=\sum_{a_{i}}\frac{\chi_{\Lambda}^{0}(e_{i}\xi_{i})}{\chi_{\Lambda}^{0}(1)},
\end{equation}
where the sum is taken over all simple roots appearing in the decomposition of $w_{0}$, counted with multiplicity. Observe that $f_{\Lambda}=l(w_{0})-k_{\Lambda},$ 
so finding $f_{\Lambda}$ is equivalent to finding $k_{\Lambda}$ since we know the value of $l(w_{0})$. 

\begin{definition}\normalfont
Let $\lambda=(\lambda_{1},
\ldots, \lambda_{k})$ be a partition, we define \[n(\lambda)=\sum_{i=1}^{k}(i-1)\lambda_{i}.\]
If $\Lambda:X\rightarrow \mathcal{P}$, where $X$ is a finite set, we define 
\[n(\Lambda)=\sum_{x \in X}n(\Lambda(x)).\]
\end{definition} 
 The following lemma gives us the value of $k_{\Lambda}$ in type $A$.
\begin{lemma}\label{d=2}\textup{\cite[Lemma~4.3.14]{hausel2019arithmetic}}
Let $\Lambda \in \mathcal{Q}_{d,n}$. Then 
\[k_{\Lambda}=n(\Lambda \rq)-n(\Lambda),\]
where $\Lambda \rq$ is obtained by 
transposing each partition of $\Lambda$.  In particular, for $d=2$ and $\Lambda=\pare{\lambda,\mu}$, we have that $\chi_{\Lambda}^{0}$ is the representation $\chi_{\lambda,\mu}\in \Irr(W_n)$, so we have that 
\begin{equation*}\binom{n}{2}\frac{\chi^{}_{\lambda,\mu}(s)}{\chi^{}_{\lambda,\mu}(1)}=n(\Lambda \rq)-n(\Lambda),  \end{equation*} 
where $s\in \S_{n}$ denotes a transposition. 
\end{lemma}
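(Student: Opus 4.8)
The plan is to first collapse the sum defining $k_{\Lambda}$ in \eqref{k lam} to a single term, then to evaluate that term by a trace computation on the induced module $\chi^{0}_{\Lambda}$, and finally to feed the result into the classical content formula for $\S_{n}$.

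First I would reduce. In type $A$ the element $w_{0}$ has length $\binom{n}{2}$, so the sum in \eqref{k lam} runs over $\binom{n}{2}$ simple roots counted with multiplicity. For adjacent indices $i,j$, any permutation $\sigma\in\S_{n}$ carrying $\{i,i+1\}$ to $\{j,j+1\}$ conjugates $\xi_{i}$ to $\xi_{j}$ and $t_{H_{a_i}}=t_{i}t_{i+1}^{-1}$ to $t_{H_{a_j}}$, hence conjugates $e_{i}\xi_{i}$ to $e_{j}\xi_{j}$; since characters are class functions, $\chi^{0}_{\Lambda}(e_{i}\xi_{i})$ is independent of $i$, giving
\[
k_{\Lambda}=\binom{n}{2}\,\frac{\chi^{0}_{\Lambda}(e_{1}\xi_{1})}{\chi^{0}_{\Lambda}(1)}.
\]
By Lemma~\ref{conmute}, $e_{1}$ commutes with $\xi_{1}$, so $\chi^{0}_{\Lambda}(e_{1}\xi_{1})$ is the trace of $\xi_{1}=s_{1}$ restricted to the image $\Im(e_{1})$ of the projector $e_{1}$.

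The heart of the argument, and the step I expect to be the main obstacle, is this trace computation. Recall $\chi^{0}_{\Lambda}=\Ind_{\prod N_i}^{S_{d,n}}\bigotimes_i\chi_{\lambda_i,t_g^{i}}^{N_i}$ from \eqref{expresion irreducibles de N}. I would realize the induced module on a basis indexed by colorings $c\colon[n]\to C_{d}$ with $\lvert c^{-1}(g)\rvert=n_{g}$, one copy $W_{c}$ of $W=\bigotimes_{g}\S^{\Lambda(g)}$ per coloring, on which the torus acts through $c$; thus $t_{1}t_{2}^{-1}$ acts trivially on $W_{c}$ exactly when $c(1)=c(2)$, so $\Im(e_{1})=\bigoplus_{c(1)=c(2)}W_{c}$. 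On such a summand $s_{1}$ fixes the coloring and acts as the transposition of positions $1,2$ inside the symmetric group $\S_{n_{g}}$ of their common color $g=c(1)$, contributing $\S^{\Lambda(g)}(s)\prod_{g'\neq g}\S^{\Lambda(g')}(1)$. Counting the colorings with $c(1)=c(2)=g$ then yields
\[
\frac{\chi^{0}_{\Lambda}(e_{1}\xi_{1})}{\chi^{0}_{\Lambda}(1)}=\frac{1}{\binom{n}{2}}\sum_{g\in C_{d}}\binom{n_{g}}{2}\,\frac{\S^{\Lambda(g)}(s)}{\S^{\Lambda(g)}(1)},
\]
where $s$ denotes a transposition.

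Finally I would apply the classical content formula for the symmetric group, namely $\binom{m}{2}\frac{\S^{\nu}(s)}{\S^{\nu}(1)}=\sum_{(i,j)\in\nu}(j-i)=n(\nu')-n(\nu)$ (the class sum of all transpositions, equivalently the sum of the Jucys--Murphy elements, acts on $\S^{\nu}$ by the content sum), to each block $g$. Combining it with the two displays gives $k_{\Lambda}=\sum_{g}\bigl(n(\Lambda(g)')-n(\Lambda(g))\bigr)=n(\Lambda')-n(\Lambda)$, which is the main assertion. For the particular case $d=2$ one has $S_{2,n}=W_{n}$ and $\chi^{0}_{\Lambda}=\chi_{\lambda,\mu}$; here $e_{1}\xi_{1}=\tfrac12(s_{1}+t_{1}t_{2}s_{1})$, and since a positive transposition $s_{1}$ and the negative transposition $t_{1}t_{2}s_{1}$ are conjugate in $W_{n}$ (conjugate $s_{1}$ by $t_{1}$), we get $\chi^{0}_{\Lambda}(e_{1}\xi_{1})=\chi_{\lambda,\mu}(s)$, so the general identity specializes to $\binom{n}{2}\frac{\chi_{\lambda,\mu}(s)}{\chi_{\lambda,\mu}(1)}=n(\Lambda')-n(\Lambda)$, as claimed. (Alternatively, the first identity is exactly \textup{\cite[Lemma~4.3.14]{hausel2019arithmetic}}, and only the $d=2$ reformulation needs the conjugacy remark.)
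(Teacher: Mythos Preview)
Your argument is correct. The paper does not supply its own proof of this lemma; it simply cites \cite[Lemma~4.3.14]{hausel2019arithmetic}, so there is nothing to compare against beyond noting that your route (collapse by conjugacy to a single term, compute the trace of $e_1\xi_1$ on the induced module via the coloring model, then apply the content formula $\binom{m}{2}\S^{\nu}(s)/\S^{\nu}(1)=n(\nu')-n(\nu)$) is exactly the expected one and matches the cited source. Your extra remark for the $d=2$ reformulation, that $s_1$ and $t_1t_2s_1$ are conjugate in $W_n$ so that $\chi_{\lambda,\mu}(e_1\xi_1)=\chi_{\lambda,\mu}(s)$, is a clean way to extract the particular case.
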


Now we will give a procedure to compute the value of $\chi_{\Lambda}^{0}$ at an element of the Weyl group $W\subset S_{d,n}$, for any 
type. Let $\sigma \in W \subset S_{d,n}$, we denote by  $g=\xi \pi\in C_{d}^{n}\rtimes W$ a generic element $g\in S_{d,n}$, with $\xi \in C_{d}^{n}$ and $\pi \in W$. Then, when inducing, we have 
\begin{equation}\chi_{\Lambda}^{0}(\sigma)=\frac{1}{\size{C_{d}^{n}\rtimes {\fontshape{ui}\selectfont {\bf Stab} (\psi)}}}\underset{g \in 
S_{d,n}}{\sum} \chi_{\Lambda}^{N(\psi)}(^{g}\sigma ),\label{24}\end{equation}
where $\chi_{\Lambda}^{N(\psi)}$ is extended as $0$ outside $C_{d}^{n}\rtimes {\fontshape{ui}\selectfont {\bf Stab} (\psi)}$. We have that
\[^{g}\sigma=g\sigma g^{-1}=\xi(^{\pi}\sigma)\xi^{-1}=\xi ^{(^{\pi}\sigma)}(\xi^{-1})(^{\pi}\sigma).\]
Since $^{\pi}\sigma\in W$, we have $^{(^{\pi}\sigma)}(\xi^{-1})\in C_{d}^{n}$. When $^{\pi}\sigma\in {\fontshape{ui}\selectfont {\bf Stab} (\psi)}$, we have 
\[\psi(\xi ^{(^{\pi}\sigma)}(\xi^{-1}))=\psi(\xi)\psi^{^{\pi}\sigma}(\xi^{-1})=\psi(\xi)\psi(\xi^{-1})=1.\]
Then, we have 
$\chi_{\Lambda}^{N(\psi)}= \begin{cases}\chi_{\Lambda}^{\psi}(^{\pi}\sigma) \ \text{if $^{\pi}\sigma\in {\fontshape{ui}\selectfont {\bf Stab} (\psi)},$ } \\
\text{$0$ otherwise,}
\end{cases}$
and Equation~\eqref{24} gives us 
\begin{equation}
\chi_{\Lambda}^{0}(\sigma)=\frac{1}{\size{{\fontshape{ui}\selectfont {\bf Stab} (\psi)}}} \underset{\pi \in W}{\sum}\chi_{\Lambda}^{\psi}(^{\pi}\sigma)=\Ind_{{\fontshape{ui}\selectfont {\bf Stab} (\psi)}}^{W}\chi_{\Lambda}^{\psi}(\sigma).\label{Lambda en sigma}
\end{equation}

\begin{lemma}
Let $\mu \in \Irr(S_{d,n}^{A^{\ast}})$ be a factor of the restriction of $\chi_{\Lambda}^{0}$, for $\Lambda \in \mathcal{Q}_{d,n}$. Then $k_{\mu}=k_{\Lambda}$.

\end{lemma}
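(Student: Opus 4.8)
The plan is to mirror the argument of Lemma~\ref{restringir a Sl el g}, which established $g_\mu = g_\Lambda$ in exactly this setting, replacing the torus element $t_1^{d/2}t_2^{d/2}$ there by the element $e_i\xi_i$ appearing in the definition~\eqref{k lam} of $k_\Lambda$. Since both $k_\mu$ and $k_\Lambda$ are sums over the same set of simple roots occurring in a reduced word for $w_0$ (the Weyl group is $\S_n$ in both the $A$ and $A^\ast$ cases, so $w_0$ and its reduced expressions agree), it will be enough to prove, for each such simple root $a_i$, the equality of ratios $\chi_\Lambda^0(e_i\xi_i)/\chi_\Lambda^0(1) = \mu(e_i\xi_i)/\mu(1)$, and then sum.

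First I would recall from Example~\ref{Sl_{n} caracteres} that the restriction $(\chi_\Lambda^0)'$ of $\chi_\Lambda^0$ to $S_{d,n}^{A^\ast}$ decomposes as $\sum_{j=1}^{t}\mu^{g_j}$ into $t = d/o(\Lambda)$ pairwise conjugate irreducible factors, where the $g_j$ may be taken to be representatives of $N/H \cong C_d$. Because $N/H$ is realized as the image of the determinant map on the torus $C_d^n$, these representatives can in fact be chosen inside $C_d^n$ (for instance as powers of a torus lift of a generator of $N/H$). Evaluating the decomposition at the identity immediately gives the denominator relation $\chi_\Lambda^0(1) = t\,\mu(1)$.

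Next I would check that $e_i\xi_i$ genuinely lies in $\C[S_{d,n}^{A^\ast}]$, so that the restricted character may be evaluated on it: the generator $\xi_i$ lies in $\Ker(\epsilon) = S_{d,n}^{A^\ast}$ since $\epsilon(\xi_i)=1$, and $t_{H_{a_i}} = t_i t_{i+1}^{-1}$ is a determinant-one torus element, whence $e_i \in \C[T'] \subset \C[S_{d,n}^{A^\ast}]$. Extending characters linearly, this yields $\chi_\Lambda^0(e_i\xi_i) = (\chi_\Lambda^0)'(e_i\xi_i) = \sum_{j=1}^t \mu^{g_j}(e_i\xi_i)$. The crux is then to show each summand equals $\mu(e_i\xi_i)$. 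Writing $\mu^{g_j}(e_i\xi_i) = \mu\bigl(g_j^{-1}e_i g_j \cdot g_j^{-1}\xi_i g_j\bigr)$ and using that $g_j \in C_d^n$ commutes with $e_i$ inside the abelian torus, I get $g_j^{-1}e_ig_j = e_i$; and relation~\rel{S3} shows that conjugation of $\xi_i$ by a torus element gives $g_j^{-1}\xi_i g_j = t_{H_{a_i}}^{m_j}\xi_i$ for some integer $m_j$ (indeed $g_j^{-1}s_i(g_j) = (t_i t_{i+1}^{-1})^{m_j} = t_{H_{a_i}}^{m_j}$, reading off the swap $s_i$ on the tuple $g_j$). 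Since $e_i$ commutes with $t_{H_{a_i}}$ and absorbs its powers, $e_i t_{H_{a_i}}^{m_j} = e_i$ by the averaging-idempotent computation of Lemma~\ref{conmute}, so $e_i\cdot t_{H_{a_i}}^{m_j}\xi_i = e_i\xi_i$ and hence $\mu^{g_j}(e_i\xi_i) = \mu(e_i\xi_i)$.

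Combining these gives $\chi_\Lambda^0(e_i\xi_i) = t\,\mu(e_i\xi_i)$, so $\chi_\Lambda^0(e_i\xi_i)/\chi_\Lambda^0(1) = \mu(e_i\xi_i)/\mu(1)$; summing over the simple roots appearing in $w_0$ yields $k_\Lambda = k_\mu$. The only genuinely delicate point I anticipate is the pair of facts that the coset representatives $g_j$ can be chosen in the torus and that conjugation of $\xi_i$ by such a $g_j$ lands in the coset $t_{H_{a_i}}^{m_j}\xi_i$ absorbed by $e_i$; once these are in place, the rest is bookkeeping strictly parallel to Lemma~\ref{restringir a Sl el g}.
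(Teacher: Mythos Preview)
Your proof is correct and parallels Lemma~\ref{restringir a Sl el g} as you intended, but your key step is genuinely different from the paper's. The paper works with the bare element $\xi_i=s_i$ rather than $e_i\xi_i$: it observes that the torus-conjugates $\{t s_i t^{-1}:t\in C_d^n\}$ all have the form $s_i t_i^j t_{i+1}^{-j}$ and asserts that these are mutually conjugate \emph{inside} $S_{d,n}^{A^\ast}$, so that $\mu^{g_j}(s_i)=\mu(s_i)$ because characters are class functions. Your route instead keeps the idempotent $e_i$ in play and shows that $g_j^{-1}(e_i\xi_i)g_j = e_i\xi_i$ as an \emph{element} of the group algebra, via $e_i t_{H_{a_i}}^{m_j}=e_i$. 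This is arguably cleaner: it matches the defining formula~\eqref{k lam} for $k_\Lambda$ on the nose and sidesteps the verification that the torus-conjugates of $s_i$ really lie in one $S_{d,n}^{A^\ast}$-conjugacy class (which requires using a third coordinate of $T'$ to absorb the determinant, so is not entirely immediate for small $n$). The paper's approach, on the other hand, implicitly prepares for Remark~\ref{los dos son split}, where $\chi_0(e_i\xi_i)$ is replaced by $\chi_0(\xi_i)$ under splitness hypotheses.
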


\begin{proof}
 We have that
\[\chi_{\Lambda}^{0}(1)=(\chi_{\Lambda}^{0})'(1)=\frac{d}{o(\Lambda)}\mu(1),\]
by Subsection~\ref{Sl_{n} caracteres}. By Subsection~\ref{caso ciclico}, we know that the irreducible factors appearing in the restriction of $\chi_{\Lambda}^{0}$ are of the form $\mu^{t}$ for some $t\in C_{d}^{n}$. Moreover, the conjugates $\{ts_{i}t^{-1}:t\in C_{d}^{n}\}$ of a simple transposition $s_{i}\in S_{d,n}^{A}$ are of the form $s_{i}t_{i}^{j}t_{i+1}^{-j}$, and all of them are conjugated in  $S_{d,n}^{A^{\ast}}$, hence we have 
\[\chi_{\Lambda}^{0}(s)=(\chi_{\Lambda}^{0})'(s)=\sum_{i=0}^{g_{i}}\mu^{g_{i}}(s)=\frac{d}{o(\Lambda)}\mu(s).\]
Replacing this in Equation~\eqref{k lam}, we obtain that $k_{\mu}=k_{\Lambda}$, as desired.
\end{proof}

\begin{lemma} Let $\Lambda=(\Lambda_1,\Lambda_2,\lambda_{3},\ldots,\lambda_{(d+2)/2})\in \mathcal{R}_{d,n}$, with $\Lambda_i=(\lambda_i,\mu_i)$ for $i=1,2$. Then  
\[(n^2-n)\frac{\chi_{\Lambda}^{0}(s)}{\chi_{\Lambda}^{0}(1)}=2n(\Lambda_1')-2n(\Lambda_1)+2n(\Lambda_2')-2n(\Lambda_2)+n(\lambda_{3}')-n(\lambda_{3})+\cdots +n(\lambda_{(d+2)/2}')-n(\lambda_{(d+2)/2}),\] \label{k en tipo B}
where $s$ denotes a transposition.
\end{lemma}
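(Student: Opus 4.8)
The plan is to reduce the computation to an induced character value inside the Weyl group $W=W_n$, and then to split it block by block and invoke Lemma~\ref{d=2}. Since $s$ lies in $W_n \subset S_{d,n}^{B}$, Equation~\eqref{Lambda en sigma} gives $\chi_{\Lambda}^{0}(s)=\Ind_{\mathbf{Stab}(\psi)}^{W_n}\chi_{\Lambda}^{\psi}(s)$, where $\mathbf{Stab}(\psi)=W_{n_1}\times W_{n_2}\times \S_{n_3}\times\cdots\times \S_{n_{(d+2)/2}}$ and $\chi_{\Lambda}^{\psi}=\rho_1\otimes\rho_2\otimes\cdots$ with $\rho_1=\chi_{\lambda_1,\mu_1}$, $\rho_2=\chi_{\lambda_2,\mu_2}$ and $\rho_i=\S^{\lambda_i}$ for $i>2$. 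Thus the whole problem becomes the evaluation of an induced character at a transposition of $W_n$.

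First I would record the elementary normalization formula for induced characters: if $H\leq G$, $\phi\in\Irr(H)$ and $K$ is the $G$-conjugacy class of $g$, then
\[\frac{\Ind_{H}^{G}\phi(g)}{\Ind_{H}^{G}\phi(1)}=\frac{1}{\size{K}}\sum_{h\in H\cap K}\frac{\phi(h)}{\phi(1)},\]
which follows from Frobenius' formula $\Ind_{H}^{G}\phi(g)=\tfrac{1}{\size{H}}\sum_{x:\,xgx^{-1}\in H}\phi(xgx^{-1})$ by grouping the sum according to $h=xgx^{-1}$ and using $\Ind_{H}^{G}\phi(1)=[G:H]\phi(1)$. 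Applying this with $G=W_n$, $H=\mathbf{Stab}(\psi)$, $\phi=\chi_{\Lambda}^{\psi}$ and $g=s$, the key point is that the $W_n$-class $K$ of a transposition consists of \emph{all} reflections in roots $e_a\pm e_b$: sign changes conjugate $e_a-e_b$ to $e_a+e_b$, so $\size{K}=n^2-n$. This is exactly the factor appearing on the left-hand side of the statement.

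Next I would evaluate $\sum_{h\in H\cap K}\phi(h)/\phi(1)$ block by block. Writing $h=(h_1,h_2,\dots)$ with $h_i$ in the $i$-th factor, the element $h$ is a reflection of type $e_a\pm e_b$ exactly when a single $h_i$ is such a reflection inside its own block and all other components are trivial; for such $h$ the outer tensor product gives $\phi(h)/\phi(1)=\rho_i(h_i)/\rho_i(1)$. For the blocks $i=1,2$, where the factor is $W_{n_i}$, the reflections of type $e_a\pm e_b$ form a single $W_{n_i}$-class of size $n_i^2-n_i=2\binom{n_i}{2}$ (the sign-change reflections lie in a different class and are excluded), so by Lemma~\ref{d=2} these contribute $2\big(n(\Lambda_i')-n(\Lambda_i)\big)$. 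For the blocks $i>2$, where the factor is $\S_{n_i}$, only the $\binom{n_i}{2}$ positive transpositions occur, and the content-sum formula for $\S_{n_i}$---the special case $\mu_i=\emptyset$ of Lemma~\ref{d=2}---gives the contribution $n(\lambda_i')-n(\lambda_i)$. Summing the blocks produces the right-hand side of the asserted identity.

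I expect the main obstacle to be the conjugacy bookkeeping rather than any hard computation: one must correctly identify the $W_n$-class of $s$ as the full set of $e_a\pm e_b$ reflections (giving $n^2-n$, not $\binom{n}{2}$), observe that inside each $W_{n_i}$-block both signs of transposition occur---whence the factor $2$ in front of $n(\Lambda_i')-n(\Lambda_i)$---while inside each $\S_{n_i}$-block only positive transpositions occur, and throughout exclude the sign-change reflections, which are not conjugate to $s$. Once this is pinned down, the result is an immediate assembly of the per-block values supplied by Lemma~\ref{d=2}.
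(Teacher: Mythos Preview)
Your proof is correct and follows essentially the same route as the paper's: both reduce to the induced character $\Ind_{\mathbf{Stab}(\psi)}^{W_n}\chi_{\Lambda}^{\psi}(s)$ via Equation~\eqref{Lambda en sigma}, split the contribution block by block, and finish with Lemma~\ref{d=2}. The only difference is packaging: the paper counts conjugating elements $\pi\in W_n$ directly (obtaining the constants $C_j$), whereas you use the equivalent normalized Frobenius identity $\frac{\Ind\phi(g)}{\Ind\phi(1)}=\frac{1}{|K|}\sum_{h\in H\cap K}\frac{\phi(h)}{\phi(1)}$ and count $H\cap K$ instead---this yields the same per-block factors $2\binom{n_i}{2}$ for $i=1,2$ and $\binom{n_i}{2}$ for $i>2$.
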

\begin{proof}
We define the groups $N_i$ as in Notation~\ref{notation psi}, and we denote by $n_{i}$ the block sizes, as in Notation~\ref{blocks ni}. Using Equation~\eqref{Lambda en sigma}, we have that 
\[\chi_{\Lambda}^{0}(s)=\frac{1}{\size{{\fontshape{ui}\selectfont {\bf Stab} (\psi)}}} \underset{\pi \in W_{n}}{\sum}\chi_{\Lambda}^{\psi}(^{\pi}s).\]
Since $s$ is a transposition, its conjugates are either a transposition or a transposition that has two $-1$. Using this, we have 
 \begin{equation*} \begin{array}{c}
 \chi_{\Lambda}^{0}(s)=\textstyle\frac{1}{\size{{\fontshape{ui}\selectfont {\bf Stab} (\psi)}}}(C_{1}\chi_{\Lambda_1}(s)\chi_{\Lambda_2}(1)\underset{i>2}{\prod}\S^{\lambda_i}(1)+C_{2}\chi_{\Lambda_1}(1)\chi_{\Lambda_2}(s)\underset{i>2}{\prod}\S^{\lambda_i}(1) \\
+ \textstyle C_{j}\underset{j>2}{\sum}\chi_{\Lambda_1}(1)\chi_{\Lambda_2}(1)\S^{\lambda_j}(s) \underset{i>2,i\neq j}{\prod}\S^{\lambda_i}(1)), \end{array} \end{equation*} 
where $C_j=\size{\set{\pi \in W_n:  ^{\pi}s \ \text{is in the block $j$} }}.$  Lets compute $C_j$ for $j>2$, where the block $j$ is $\mathbb{S}_{n_j}$. Let $g=\xi \sigma$, with $\xi\in \{\pm 1\}^{n}$ and $\sigma \in \mathbb{S}_n$, such that $gsg^{-1}$ belongs to the $j$-block. We have $gsg^{-1}=\xi\sigma s \sigma^{-1} \xi^{-1}$. The number of elements $\sigma\in \S_{n}$ such that $\sigma s \sigma^{-1}$ belongs to the $j$-block is $\textstyle \binom{n_j}{2}2(n-2)!.$
Then, for $\xi\sigma s \sigma^{-1} \xi^{-1}$ to be a transposition without $-1$, 
$\xi$ should have the same sign in the positions corresponding to this transposition, so there are $2^{n-1}$ possible $\xi$. Then, $\textstyle C_j=\binom{n_j}{2}2^{n}(n-2)!$.

Let us now compute $C_i$ for $i=1,2$, where the $i$-block is $W_{n_i}$. In this case, we have that $\xi\sigma s \sigma^{-1} \xi^{-1}\in W_{n_i}$ if and only if $\sigma s \sigma^{-1}$ is a transposition of this block, hence we have $C_i=\textstyle \binom{n_i}{2}2^{n+1}(n-2)!.$
Then, replacing the values of $C_{j}$ and using Equation~\eqref{N psi}, we have that 
\begin{equation*}
2\binom{n_1}{2} \frac{\chi_{\Lambda_1}(s)}{\chi_{\Lambda_1}(1)}+2\binom{n_2}{2} \frac{\chi_{\Lambda_2}(s)}{\chi_{\Lambda_2}(1)}+\underset{j>2}{\sum}\binom{n_j}{2}\frac{\S^{\lambda_j}(s)}{\S^{\lambda_j}(1)},
\end{equation*}
which gives us the desired equality by using Lemma~\ref{d=2}.
\end{proof}

\begin{lemma}
Let $\Lambda=(\Lambda_1,\Lambda_2,\lambda_3,\ldots, \lambda_{(d+2)/2})\in \mathcal{R}_{d,n}$, where $\Lambda_i=(\lambda_i,\mu_i)$ for $i=1,2$. Then, we have 
\[\frac{n \chi_{\Lambda}^{0}(t)}{\chi_{\Lambda}^{0}(1)}=\size{(\Lambda_{1},\Lambda_{2})}^{\text{alt}} =\size{\lambda_1}-\size{\mu_1}+\size{\lambda_2}-\size{\mu_2},\]

where $t=\Diag(-1,1,\ldots,1)\in W_{n}$. \label{en t}
\end{lemma}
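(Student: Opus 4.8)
The plan is to evaluate $\chi_{\Lambda}^{0}$ at $t$ using the induction formula~\eqref{Lambda en sigma}, which applies since $t\in W_{n}\subset S_{d,n}^{B}$, giving
\[\chi_{\Lambda}^{0}(t)=\frac{1}{\size{\textbf{Stab}(\psi)}}\sum_{\pi\in W_{n}}\chi_{\Lambda}^{\psi}(^{\pi}t),\]
with $\chi_{\Lambda}^{\psi}$ extended by zero outside $\textbf{Stab}(\psi)$. First I would record that the $W_{n}$-conjugates of $t$ are exactly the $n$ single sign changes $t^{(j)}=\Diag(1,\ldots,-1,\ldots,1)$ (with the $-1$ in position $j$), that they form one conjugacy class, and that each is hit by precisely $\size{C_{W_{n}}(t)}=2^{n}(n-1)!$ elements $\pi$. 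Recalling from Subsection~\ref{caracteres de tipo B} that $\textbf{Stab}(\psi)=W_{n_{1}}\times W_{n_{2}}\times\S_{n_{3}}\times\cdots\times\S_{n_{(d+2)/2}}$ and that $\chi_{\Lambda}^{\psi}$ is the external tensor product $\chi_{\lambda_{1},\mu_{1}}\boxtimes\chi_{\lambda_{2},\mu_{2}}\boxtimes\S^{\lambda_{3}}\boxtimes\cdots$, the crucial observation is that $t^{(j)}\in\textbf{Stab}(\psi)$ only when $j$ lies in one of the two hyperoctahedral blocks (blocks $1$ or $2$): conjugation by a sign change inverts the $j$-th entry of $\psi$, which fixes $\psi$ exactly when that entry equals $\pm 1$. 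Hence only blocks $1$ and $2$ contribute.

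The key technical ingredient I would isolate as a sub-step is the value of a hyperoctahedral character at a single sign change: for $\chi_{\lambda,\mu}\in\Irr(W_{m})$ with $m=\size{\lambda}+\size{\mu}$ and $t$ a single sign change,
\[\frac{\chi_{\lambda,\mu}(t)}{\chi_{\lambda,\mu}(1)}=\frac{\size{\lambda}-\size{\mu}}{m}.\]
I would prove this by writing $\chi_{\lambda,\mu}=\Ind_{W_{\size{\lambda}}\times W_{\size{\mu}}}^{W_{m}}\S^{\lambda}\boxtimes(\S^{\mu}\otimes\sgn)$ and applying the Frobenius induced-character formula at $t$: every conjugate of $t$ is again a single sign change lying in one of the two factors, the modules $\S^{\lambda},\S^{\mu}$ are inflated from the symmetric groups (so they take their degree value on any sign change), and $\sgn$ contributes $-1$ on a sign change. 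Counting how many conjugates land in the first factor ($\size{\lambda}$ of them) versus the second ($\size{\mu}$ of them) produces the alternating sum $\size{\lambda}-\size{\mu}$, and dividing by the degree yields the stated ratio.

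Finally I would assemble the two pieces. Applying the sub-step inside blocks $1$ and $2$, the sum over the $n_{i}$ sign changes in block $i$ collapses to $(\size{\lambda_{i}}-\size{\mu_{i}})\,\chi_{\Lambda}^{\psi}(1)$, so
\[\sum_{j=1}^{n}\chi_{\Lambda}^{\psi}(t^{(j)})=\big(\size{\lambda_{1}}-\size{\mu_{1}}+\size{\lambda_{2}}-\size{\mu_{2}}\big)\,\chi_{\Lambda}^{\psi}(1).\]
Combining this with the centralizer count and dividing by $\chi_{\Lambda}^{0}(1)=\chi_{\Lambda}^{\psi}(1)\,[W_{n}:\textbf{Stab}(\psi)]$ from Equation~\eqref{N psi}, the factors $\size{\textbf{Stab}(\psi)}$ and $\chi_{\Lambda}^{\psi}(1)$ cancel, leaving the ratio $\size{C_{W_{n}}(t)}/\size{W_{n}}=1/n$, whence
\[\frac{n\,\chi_{\Lambda}^{0}(t)}{\chi_{\Lambda}^{0}(1)}=\size{\lambda_{1}}-\size{\mu_{1}}+\size{\lambda_{2}}-\size{\mu_{2}}=\size{(\Lambda_{1},\Lambda_{2})}^{\text{alt}},\]
as claimed. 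I expect the main obstacle to be the bookkeeping in the sub-step: correctly using that sign changes evaluate to the degree on the inflated Specht factors and to $-1$ against $\sgn$, together with the count of conjugates landing in each factor. Everything else is an index and centralizer computation entirely analogous to Lemma~\ref{g en tipo B}.
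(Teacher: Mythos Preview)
Your proposal is correct and follows essentially the same route as the paper: both arguments prove the sub-identity $\chi_{\lambda,\mu}(t)/\chi_{\lambda,\mu}(1)=(\size{\lambda}-\size{\mu})/m$ via the induction formula for $\chi_{\lambda,\mu}$, then apply Equation~\eqref{Lambda en sigma}, observe that only the two hyperoctahedral blocks can contain a single sign change, and assemble using the centralizer/block counts together with Equation~\eqref{N psi}. Your use of $\size{C_{W_n}(t)}=2^{n}(n-1)!$ is just a repackaging of the paper's counts $D_i=2^{n}(n-1)!\,n_i$.
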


\begin{proof}
We start by computing
\[m \chi_{\lambda,\mu}(t)/\chi_{\lambda,\mu}(1),\] for generic partitions $\lambda$ and $\mu$ with $k_1=\size{\lambda}$, $k_2=\size{\mu}$, and  $m=k_1+k_2$. Let $g=\sigma \pi \in \mathbb{S}_{m}\rtimes \{\pm 1\}^{m}$, then $gtg^{-1}=\sigma \pi t \pi^{-1}\sigma^{-1}=\sigma t \sigma^{-1}$, hence any conjugate of $t$ comes from permuting the coordinate that contains $-1$ and only depends on which element of $\mathbb{S}_{m}$ we conjugate. When inducing, we have
\[\chi_{\lambda,\mu}(t)=\Ind_{N_1 \times N_2}^{W_{m}}\chi_{\lambda,\mu}^{N(\psi)}(t)= \frac{1}{2^{m}k_1!k_2!}\left(\chi_{\lambda,\mu}^{N(\psi)}(t_1)C_1+\chi_{\lambda,\mu}^{N(\psi)}(t_2)C_2\right),\]
where $t_1$ and $t_2$ denote conjugates of $t$ that have the coordinate $-1$ in the first or second block respectively, and $C_1$ and $C_2$ denote the number of times that each of these possibilities occurs.
Let's compute $C_i$ for $i=1,2$. Since the element $\pi \in \{\pm 1\}^{m}$ does not influence whether the $-1$ belongs to the $i$-block or not, we have $2^{m}$ possibilities for this element. On the other hand, the permutation $\sigma$ has to move the $-1$ to any of the $k_i$ positions of the $i$-block, hence the number of ways of doing this is $(m-1)!k_i$. Therefore, we have 
$C_i=2^{m}(m-1)!\ k_i$. On the other hand, 
we have that 
\[\chi_{\lambda,\mu}^{N(\psi)}(t_i)=(-1)^{i-1}\S^{\lambda}(1)\S^{\mu}(1),\]  
for $i=1,2$.
We also have  
\[\chi_{\lambda,\mu}(1)=\Ind_ {N_{1}\times N_{2}}^{W_{m}}(1)=\binom{m}{k_{1}}\chi_{\lambda,\mu}^{N(\psi)}(1)=\binom{m}{k_{1}}\S^{\lambda}(1)\S^{\mu}(1).\]
Putting both things together, we have 
\begin{equation}\frac{m \chi_{\lambda,\mu}(t)}{\chi_{\lambda,\mu}(1)}=k_{1}-k_{2}. \label{k1-k2}\end{equation}
Using Equation~\eqref{Lambda en sigma}, we have that 
\[\chi_{\Lambda}^{0}(t)=\Ind_{{\fontshape{ui}\selectfont {\bf Stab} (\psi)}}^{W_n}\chi_{\Lambda}^{\psi}(t),\] hence 
\begin{equation*} 
 \chi_{\Lambda}^{0}(t)=\frac{1}{\size{{\fontshape{ui}\selectfont {\bf Stab} (\psi)}}}\pare{D_{1}\chi_{\Lambda_1}(t)\chi_{\Lambda_2}(1)\underset{i>2}{\prod}\S^{\lambda_i}(1)+D_{2}\chi_{\Lambda_1}(1)\chi_{\Lambda_2}(t)\underset{i>2}{\prod}\S^{\lambda_i}(1)},
 \end{equation*}
 where $D_i$ is the number of ways to conjugate $t$ so that the $-1$ lies in the $i$-block. 
 Then, using Equation~\eqref{N psi}, we have

 \begin{equation} \frac{n\chi_{\Lambda}^{0}(t)}{\chi_{\Lambda}^{0}(1)}=\frac{1}{2^{n}(n-1)!}\pare{\frac{\chi_{\Lambda_1}(t)}{\chi_{\Lambda_1}(1)}D_1+\frac{\chi_{\Lambda_2}(t)}{\chi_{\Lambda_2}(1)}D_2}.\label{D2}\end{equation}
It is straightforward to see that $D_{i}=(n-1)!\ n_i 2^{n}$. Then,  Equation~\eqref{D2} becomes
\[\frac{n\chi_{\Lambda}(t)}{\chi_{\Lambda}(1)}=\frac{\chi_{\Lambda_1}(t)}{\chi_{\Lambda_1}(1)}n_1+\frac{\chi_{\Lambda_2}(t)}{\chi_{\Lambda_2}(1)}n_2,\] 
that by Equation~\eqref{k1-k2} is equal to 
\begin{equation*}
\size{\lambda_1}-\size{\mu_1}+\size{\lambda_2}-\size{\mu_2}.
\end{equation*}
This completes the proof.
\end{proof}

\begin{lemma}\label{ecuacion para tipo B}
Let $\Lambda=(\Lambda_{1},\Lambda_{2},\lambda_{3},\ldots,\lambda_{(d+2)/2})\in \mathcal{R}_{d,n}$, where $\Lambda_{i}=(\lambda_{i},\mu_{i})$ for $i=1,2$. Then, for type $B$, we have that
\begin{align*}
\quad k_{\Lambda}=\, & 2n(\Lambda_1')-2n(\Lambda_1)+2n(\Lambda_2')-2n(\Lambda_2)
+\size{(\Lambda_{1},\Lambda_{2})}^{\text{alt}}
\\
&+n(\lambda_{3}')-n(\lambda_{3})+\cdots +n(\lambda_{(d+2)/2}')-n(\lambda_{(d+2)/2})
.
\end{align*}
\end{lemma}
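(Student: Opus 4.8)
The plan is to read $k_{\Lambda}$ off its defining expression \eqref{k lam} and then assemble the two character ratios that have already been computed for a transposition and for the sign change $t$. First I would simplify the summand: since $Y_{d,n}^{B}(u)$ and $Z_{d,n}^{B}(u)$ are split (Lemmas~\ref{split 2} and~\ref{split 1}), Remark~\ref{los dos son split} applies and lets me replace $\chi_{\Lambda}^{0}(e_{i}\xi_{i})$ by $\chi_{\Lambda}^{0}(\xi_{i})$ in \eqref{k lam}. In the group $S_{d,n}^{B}=C_{d}^{n}\rtimes W_{n}$ the generators $\xi_{i}$ are exactly the simple reflections of $W_{n}$, so $\chi_{\Lambda}^{0}(\xi_{a_{i}})=\chi_{\Lambda}^{0}(s_{a_{i}})$ is the value of $\chi_{\Lambda}^{0}$ at the corresponding simple reflection, and
\[
k_{\Lambda}=\sum_{a_{i}}\frac{\chi_{\Lambda}^{0}(s_{a_{i}})}{\chi_{\Lambda}^{0}(1)},
\]
the sum running over the simple roots in a reduced expression of $w_{0}=-\Id$, counted with multiplicity.

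Next I would sort the simple reflections appearing in $w_{0}$ into conjugacy classes of $W_{n}$ and count multiplicities. There are two classes: the sign change $t=\Diag(-1,1,\ldots,1)$, coming from $a_{1}$, and the transpositions $s$, coming from the $a_{i}$ with $i>1$, which are mutually conjugate in $W_{n}$. As noted in the proof of Lemma~\ref{g en tipo B}, a reduced word for $w_{0}$ uses the transposition-type simple roots $n^{2}-n$ times; since $l(w_{0})=n^{2}$ equals the number of positive roots of $B_{n}$, the reflection $t$ occurs the remaining $n^{2}-(n^{2}-n)=n$ times. Splitting the sum along these two classes gives
\[
k_{\Lambda}=n\,\frac{\chi_{\Lambda}^{0}(t)}{\chi_{\Lambda}^{0}(1)}+(n^{2}-n)\,\frac{\chi_{\Lambda}^{0}(s)}{\chi_{\Lambda}^{0}(1)}.
\]

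Finally I would substitute the two earlier evaluations. Lemma~\ref{en t} gives $n\,\chi_{\Lambda}^{0}(t)/\chi_{\Lambda}^{0}(1)=\size{(\Lambda_{1},\Lambda_{2})}^{\text{alt}}=\size{\lambda_{1}}-\size{\mu_{1}}+\size{\lambda_{2}}-\size{\mu_{2}}$, while Lemma~\ref{k en tipo B} gives $(n^{2}-n)\,\chi_{\Lambda}^{0}(s)/\chi_{\Lambda}^{0}(1)=2n(\Lambda_{1}')-2n(\Lambda_{1})+2n(\Lambda_{2}')-2n(\Lambda_{2})+n(\lambda_{3}')-n(\lambda_{3})+\cdots+n(\lambda_{(d+2)/2}')-n(\lambda_{(d+2)/2})$. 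Adding the two expressions yields exactly the stated formula.

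The result is an assembly of earlier lemmas, so there is no deep obstacle. The two points needing care are the passage from $e_{i}\xi_{i}$ to $\xi_{i}$ inside the character — this is precisely where splitness over $\C(u)$ enters, via Remark~\ref{los dos son split} — and the bookkeeping of how often each conjugacy class of simple reflection appears in a reduced word for $w_{0}$. Note that, unlike the $g_{\Lambda}$ computation of Lemma~\ref{g en tipo B} where the $a_{1}$ term vanished because $t_{H_{a_{1}}}^{d/2}=1$, here the sign change $t$ contributes nontrivially and both classes must be kept.
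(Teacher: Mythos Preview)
Your proposal is correct and follows essentially the same approach as the paper: use splitness (via Remark~\ref{los dos son split}) to replace $\chi_{\Lambda}^{0}(e_{i}\xi_{i})$ by $\chi_{\Lambda}^{0}(\xi_{i})$, split the sum according to the two conjugacy classes of simple reflections in $W_{n}$ with multiplicities $n^{2}-n$ and $n$, and then invoke Lemmas~\ref{k en tipo B} and~\ref{en t}. Your write-up is slightly more detailed on the bookkeeping for the reduced word of $w_{0}$, but the argument is the same.
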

\begin{proof}
We know by  Lemmas~\ref{Tipo B es split 1} and~\ref{split 2} that $Y_{d,n}^{B}(u)$ and $Z_{d,n}^{B}(u)$ are split. Then, by Remark~\ref{los dos son split}, we have that 
\[k_{\Lambda}=\sum_{a_{i}}\frac{\chi_{\Lambda}^{0}(\xi_{i})}{\chi_{\Lambda}^{0}(1)}.\]
In the decomposition of $w_{0}$ there are $n^2-n$ conjugates of the transposition $s$, and $n$ conjugates of $t=\Diag(-1,1,\ldots,1)\in W_{n}$. Hence,
\[k_{\Lambda}=\frac{(n^{2}-n)\chi_{\Lambda}^{0}(s)+n\chi_{\Lambda}^{0}(t)}{\chi_{\Lambda}^{0}(1)},
\]
which gives us the desired equality by using Lemmas~\ref{k en tipo B} and~\ref{en t}.
\end{proof}

\begin{lemma}
Let $\mu \in \Irr(S_{d,n}^{D})$ be a factor of the restriction of $\chi_{\Lambda}^{0}\in \Irr(S_{d,n}^{B})$, for $\Lambda=(\Lambda_{1},\Lambda_{2},\lambda_{3},\ldots,\lambda_{(d+2)/2})\in \mathcal{R}_{d,n}$, where $\Lambda_{i}=(\lambda_{i},\mu_{i})$ for $i=1,2$. Then, we have
\begin{equation*}\label{equt}k_{\mu}=2n(\Lambda_1')-2n(\Lambda_1)+2n(\Lambda_2')-2n(\Lambda_2)+n(\lambda_{3}')-n(\lambda_{3})+\cdots +n(\lambda_{(d+2)/2}')-n(\lambda_{(d+2)/2}).\end{equation*}
    
\end{lemma}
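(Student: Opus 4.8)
The plan is to mirror the proof of the preceding type-$D$ lemma (the one computing $g_\mu$), reducing everything to Lemma~\ref{k en tipo B} through the index-two restriction relating $S_{d,n}^{B}$ and $S_{d,n}^{D}$. Recall that $k_{\mu}=\sum_{a_{i}}\mu(e_{i}\xi_{i})/\mu(1)$, where the sum runs over the simple roots in a reduced expression of $w_{0}$, counted with multiplicity. In type $D$ the element $w_{0}$ has length $n^{2}-n$, and every simple reflection occurring in it (the $s_{1},\ldots,s_{n-1}$ and $u$) is a reflection in a long root $\pm e_{i}\pm e_{j}$; there are no short-root (sign-change) contributions, which is exactly why the term $\size{(\Lambda_{1},\Lambda_{2})}^{\text{alt}}$ of Lemma~\ref{ecuacion para tipo B} must disappear. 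The first step is to observe that all these reflections are conjugate in $S_{d,n}^{D}$: since type $D$ is irreducible and simply laced, all reflections of $W_{n}'$ lie in a single conjugacy class, and conjugation carries each product $e_{i}\xi_{i}$ to the analogous product attached to the conjugated root. Hence every term $\mu(e_{i}\xi_{i})$ equals its value at the transposition case, and $k_{\mu}=(n^{2}-n)\,\mu(e\xi)/\mu(1)$, with $e\xi=e_{i}\xi_{i}$ for $\xi_{i}=s$ a transposition.

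Next I would transfer this to $\chi_{\Lambda}^{0}$ via the restriction. Since $e_{i}\xi_{i}\in\C[S_{d,n}^{D}]$, I distinguish the two cases of Subsection~\ref{rep normalizador}. If $\chi_{\Lambda}^{0}|_{S_{d,n}^{D}}=\mu$ is irreducible, then $\mu(e_{i}\xi_{i})=\chi_{\Lambda}^{0}(e_{i}\xi_{i})$ and $\mu(1)=\chi_{\Lambda}^{0}(1)$ immediately. If instead it splits as $\mu+\mu^{g}$ with $g\in S_{d,n}^{B}\setminus S_{d,n}^{D}$, I would show $\mu^{g}(e_{i}\xi_{i})=\mu(e_{i}\xi_{i})$: conjugation by $g$ sends $e_{i}\xi_{i}$ to $e_{i'}\xi_{i'}$ for a reflection $\xi_{i'}$, and since all reflections are $S_{d,n}^{D}$-conjugate the two products are conjugate in $S_{d,n}^{D}$, so $\mu$ agrees on them. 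In either case $\mu(e_{i}\xi_{i})/\mu(1)=\chi_{\Lambda}^{0}(e_{i}\xi_{i})/\chi_{\Lambda}^{0}(1)$, the analogue of Equations~\eqref{p1} and~\eqref{p2} in the $g_{\mu}$ lemma. The subtle new ingredient compared with that lemma is that the transposition $s$ is \emph{not} fixed by $g$ (it is sent to the $e_{1}+e_{2}$-type reflection $u$), so here one genuinely needs the type-$D$ conjugacy of the two kinds of long-root reflections rather than the trivial invariance of $t_{1}^{d/2}t_{2}^{d/2}$.

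Finally, using that $Y_{d,n}^{B}(u)$ and $Z_{d,n}^{B}(u)$ are split (Lemmas~\ref{split 1} and~\ref{split 2}) and Remark~\ref{los dos son split}, I replace $\chi_{\Lambda}^{0}(e_{i}\xi_{i})$ by $\chi_{\Lambda}^{0}(\xi_{i})$, and then use that each $\xi_{i}$ is conjugate to a transposition $s$ in $S_{d,n}^{B}$ (for the $u$-root via $t\in W_{n}$) to obtain $\sum_{a_{i}}\chi_{\Lambda}^{0}(\xi_{i})/\chi_{\Lambda}^{0}(1)=(n^{2}-n)\,\chi_{\Lambda}^{0}(s)/\chi_{\Lambda}^{0}(1)$. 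Applying Lemma~\ref{k en tipo B} to the right-hand side yields precisely
\[
k_{\mu}=2n(\Lambda_1')-2n(\Lambda_1)+2n(\Lambda_2')-2n(\Lambda_2)+n(\lambda_{3}')-n(\lambda_{3})+\cdots+n(\lambda_{(d+2)/2}')-n(\lambda_{(d+2)/2}),
\]
as claimed. I expect the only delicate point to be the conjugacy bookkeeping in the split case, i.e.\ guaranteeing that $\mu^{g}$ and $\mu$ agree on $e_{i}\xi_{i}$; the remainder is a direct transcription of the type-$B$ computation across the index-two restriction.
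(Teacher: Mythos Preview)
Your proposal is correct and follows essentially the same approach as the paper. Both reduce $k_{\mu}$ to $(n^{2}-n)\,\mu(e_{i}\xi_{i})/\mu(1)$ via conjugacy of the simple reflections in type $D$, then in the split case use that conjugation by $t\in W_{n}\setminus W_{n}'$ sends $e_{i}\xi_{i}$ to an $S_{d,n}^{D}$-conjugate element (so $\mu$ and $\mu^{t}$ agree on it), and finish by invoking Remark~\ref{los dos son split} and Lemma~\ref{k en tipo B}; your identification of the conjugacy bookkeeping as the one nontrivial step matches the paper exactly.
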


\begin{proof}

In type $D$, we have that all the simple symmetries appearing in the decomposition of $w_{0}$ are conjugated, and so are the elements $s_{i}e_{i}$. Since $l(w_{0})=n^{2}-n$, we have  
\[k_{\mu}=(n^2-n)\frac{\mu(s_{i}e_{i})}{\mu(1)}.\]
If $\mu$ is the restriction of $\chi_{\Lambda}^{0}$ it is clear that $k_{\mu}$ is equal to the expression of Equation~\eqref{equt}, by applying Lemma~\ref{k en tipo B} and Remark~\ref{los dos son split}. Assume that $({\chi_{\Lambda}^{0}})\rq$ has another irreducible factor, which we call $\eta$. By Subsection~\ref{caso ciclico}, we know that $\eta=\mu^{t}$, where $t=\Diag(-1,1,\ldots,1)$. Then, we have
\[\chi_{\Lambda}^{0}(s_{i}e_{i})=\eta(s_{i}e_{i})+\mu(s_{i}e_{i})=\mu(s_{i}e_{i})+\mu^{t}(s_{i}e_{i})=\mu(s_{i}e_{i})+\mu(ts_{i}e_{i}t).\]
Since  $ts_{i}e_{i}t$ and $s_{i}e_{i}$ are conjugated, we deduce that $\textstyle{\mu(s_{i}e_{i})=\frac{\chi_{\Lambda}^{0}(s_{i}e_{i})}{2}}.$
By an analogous argument we have that $\textstyle{\mu(1)=\frac{\chi_{\Lambda}^{0}(1)}{2}}$. Therefore, we have
\[k_{\mu}=(n^{2}-n)\frac{\mu(s_{i}e_{i})}{\mu(1)}=(n^{2}-n)\frac{\chi_{\Lambda}^{0}(s_{i}e_{i})}{\chi_{\Lambda}^{0}(1)}.\]
Then, applying Lemma~\ref{k en tipo B} and Remark~\ref{los dos son split}, we obtain the result.
\end{proof}

\begin{lemma}\label{ecuacion tipo c}
In type $C$, for $\Lambda=\pare{\Lambda_1,\Lambda_2,\lambda_{3},\ldots, \lambda_{(d+2)/2}}\in \mathcal{R}_{d,n}$,  where $\Lambda_{i}=(\lambda_{i},\mu_{i})$, we have
\[k_{\Lambda}=2n(\Lambda_{1}')-2n(\Lambda_{1})+2n(\Lambda_{2}')-2n(\Lambda_{2})+n(\lambda_{3}')-n(\lambda_{3})+\cdots+n(\lambda_{(d+2)/2}')-n(\lambda_{(d+2)/2})+\size{\lambda_{1}}-\size{\mu_{1}}.\]
\end{lemma}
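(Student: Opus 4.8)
The plan is to imitate the proof of Lemma~\ref{ecuacion para tipo B}, with one essential change. Since type $C$ has not been shown to be split over $\C(u)$, I cannot appeal to Remark~\ref{los dos son split} to discard the idempotents and replace $\chi_{\Lambda}^{0}(e_{i}\xi_{i})$ by $\chi_{\Lambda}^{0}(\xi_{i})$; instead I would evaluate the idempotent-twisted traces directly at the level of the group characters of $S_{d,n}$. By Lemma~\ref{g en tipo C 2}, the reduced word for $w_{0}$ contains $n$ simple roots of long type (with $\xi_{i}\mapsto t=\Diag(-1,1,\ldots,1)$ and $t_{H_{a_{i}}}=t_{1}$) and $n^{2}-n$ simple roots of short type (with $\xi_{i}\mapsto s$ a transposition and $t_{H_{a_{i}}}=t_{i}t_{i+1}^{-1}$). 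Using Equation~\eqref{k lam} together with Lemma~\ref{conmute}, so that $\chi_{\Lambda}^{0}(e_{i}\xi_{i})=\frac{1}{d}\sum_{j=1}^{d}\chi_{\Lambda}^{0}(t_{H_{a_{i}}}^{j}\xi_{i})$, it remains to evaluate the short-root contribution $(n^{2}-n)\chi_{\Lambda}^{0}(e_{s}\xi_{s})/\chi_{\Lambda}^{0}(1)$ and the long-root contribution $n\,\chi_{\Lambda}^{0}(e_{1}\xi_{1})/\chi_{\Lambda}^{0}(1)$, throughout via the induced description $\chi_{\Lambda}^{0}=\Ind_{N(\psi)}^{S_{d,n}}\chi_{\Lambda}^{N(\psi)}$ of Notation~\ref{notation psi}.

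First I would show that the short-root idempotent is inert, i.e.\ $\chi_{\Lambda}^{0}(e_{s}\xi_{s})=\chi_{\Lambda}^{0}(s)$. Writing a conjugating element as $x=\xi\pi$ with $\xi\in C_{d}^{n}$ and $\pi\in W_{n}$, conjugation carries $t_{i}^{j}t_{i+1}^{-j}s$ to a (possibly signed) transposition of two positions $p,p'$, together with a torus factor whose two values in those coordinates are mutually inverse. Membership in $N(\psi)$ forces $p,p'$ into a common block, and on any single block a pair of mutually inverse torus entries contributes $1$ to the block character. Hence every contributing term is independent of $j$ and equals its torus-free value, so averaging over $j$ returns $\chi_{\Lambda}^{0}(s)$. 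Lemma~\ref{k en tipo B} then gives
\[(n^{2}-n)\frac{\chi_{\Lambda}^{0}(s)}{\chi_{\Lambda}^{0}(1)}=2n(\Lambda_{1}')-2n(\Lambda_{1})+2n(\Lambda_{2}')-2n(\Lambda_{2})+n(\lambda_{3}')-n(\lambda_{3})+\cdots+n(\lambda_{(d+2)/2}')-n(\lambda_{(d+2)/2}).\]

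The crux, and the main obstacle, is the long-root term, where $e_{1}$ genuinely changes the value; this is precisely where type $C$ departs from type $B$. Here $e_{1}=\frac{1}{d}\sum_{j}t_{1}^{j}$ projects onto $\{t_{1}=1\}$ (in type $B$ it projects onto $\{t_{1}^{2}=1\}$), and $\xi_{1}\mapsto t$. Conjugating $t_{1}^{j}t$ by $x=\xi\pi$ produces a sign change in position $p=\pi^{-1}(1)$ together with a torus factor, supported at $p$, equal to a power $t_{1}^{j}$ times a square coming from $\xi$; membership in $N(\psi)$ forces $p$ into block $1$ or block $2$. On block $1$ the value $\psi_{1}=1$ gives the trivial character, contributing $1$ for every $j$; on block $2$ the value $\psi_{2}=-1$ gives the order-two character, which is trivial on the square and contributes $(-1)^{j}$. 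Averaging, $\frac{1}{d}\sum_{j=1}^{d}1=1$ preserves the block-$1$ term while $\frac{1}{d}\sum_{j=1}^{d}(-1)^{j}=0$ (using $d$ even) annihilates the block-$2$ term, so running the sign-change count of Lemma~\ref{en t} over the surviving block $1$ alone yields $n\,\chi_{\Lambda}^{0}(e_{1}\xi_{1})/\chi_{\Lambda}^{0}(1)=\size{\lambda_{1}}-\size{\mu_{1}}$. Adding the two contributions gives the stated formula for $k_{\Lambda}$. The delicate point to get right is the block-by-block bookkeeping of the torus character under conjugation, and in particular the cancellation $\sum_{j}(-1)^{j}=0$ that removes block $2$: it is exactly the type-$C$ coroot $t_{H_{a_{1}}}=t_{1}$ (versus $t_{1}^{2}$ in type $B$) that makes the long-root term $\size{\lambda_{1}}-\size{\mu_{1}}$ rather than $\size{\lambda_{1}}-\size{\mu_{1}}+\size{\lambda_{2}}-\size{\mu_{2}}$.
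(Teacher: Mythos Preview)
Your proof is correct and tracks the paper's structure closely: split $k_{\Lambda}$ into the short-root contribution ($n^{2}-n$ terms) and the long-root contribution ($n$ terms), and compute the latter directly from the induced character, observing that the block-$2$ piece is annihilated by $\frac{1}{d}\sum_{j}(-1)^{j}=0$. The one methodological difference is in the short-root term. The paper \emph{does} appeal to Remark~\ref{los dos son split}, but for type $B$ rather than type $C$: since $S_{d,n}^{C}=S_{d,n}^{B}$ as groups and the short-root idempotents $e_{i}$ (for $i>1$) coincide in the two types, the identity $\chi_{\Lambda}^{0}(e_{i}\xi_{i})=\chi_{\Lambda}^{0}(\xi_{i})$ established there transfers verbatim. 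You instead verify this identity by a direct conjugation argument, which is a legitimate and more self-contained alternative.

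One small inaccuracy to patch: your claim that the two torus entries after conjugation are always mutually inverse is not quite right. When the conjugate $^{\pi}s$ is a \emph{signed} transposition (possible only in blocks $1$ and $2$, where the stabilizer is $W_{n_{i}}$), the two entries come out equal rather than inverse. The conclusion survives anyway: in those blocks $\psi_{1}$ is trivial and $\psi_{2}$ has order two, hence is trivial on the resulting square, so $\psi(^{\pi}a)=1$ regardless of $j$. For blocks $k>2$ the stabilizer $\S_{n_{k}}$ forces the transposition to be unsigned, and then your mutually-inverse description is correct and gives $1$ as you say.
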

\begin{proof}
We have 
\[\xi_{1}e_{1}=t \frac{1}{d}\sum_{j}t_{1}^{j}, \quad \text{ and } \quad 
\xi_{i}e_{i}=s_{i} \frac{1}{d}\underset{j}{\sum}t_{i}^{j}t_{i+1}^{-j},\]
where $t=\Diag(-1,1,\ldots,1)\in W_{n}$. It is straightforward to see that elements $s_{i}$ are conjugated in  $S_{d,n}^{C}$. From Equation~\eqref{k lam}, and from the decomposition of $w_{0}$ we have that
\begin{equation}
k_{\Lambda}=(n^{2}-n) \chi_{\Lambda}^{0}(s_{1}\frac{1}{d}\underset{j}{\sum}t_{1}^{j}t_{2}^{-j})/\chi_{\Lambda}^{0}(1)+n \chi_{\Lambda}^{0}(t\frac{1}{d}\underset{j}{\sum}t_{1}^{j})/{\chi_{\Lambda}^{0}(1)}. \label{segundo sumando}
\end{equation}
Since $Y_{d,n}^{B}(u)$ and $Z_{d,n}^{B}(u)$ are split, we have that the first term of Equation~\eqref{segundo sumando} is equal to $\textstyle{(n^{2}-n)\frac{\chi_{\Lambda}^{0}(s_{1})}{\chi_{\Lambda}^{0}(1)}}$, which we already computed in Lemma~\ref{k en tipo B}, and is equal to 
\[2n(\Lambda_{1}')-2n(\Lambda_{1})+2n(\Lambda_{2}')-2n(\Lambda_{2})+n(\lambda_{3}')-n(\lambda_{3})+\cdots+n(\lambda_{(d+2)/2}')-n(\lambda_{(d+2)/2}).\]
Let us compute the second term of Equation~\eqref{segundo sumando}. We have that 
\[\chi_{\Lambda}^{0}(te_{1})=\frac{1}{\size{C_{d}^{n}\rtimes {\fontshape{ui}\selectfont {\bf Stab} (\psi)} }} \underset{g \in C_{d}^{n}\rtimes W_{n}}{\sum}\chi_{\Lambda}^{N(\psi)}(^{g}te_{1}).\]
If $g=\xi \pi \in C_{d}^{n}\rtimes W_{n}$, using a similar argument as before, we have that
\begin{align*}
^{g}e_{1}t= ^{g}(\frac{1}{d}  \sum_{j}t_{1}^{j}\ t)=& \frac{1}{d} \sum_{j} \xi^{(^{\pi}t_{1}^{j}t)}(\xi^{-1})(^{\pi}t_{1}^{j}t) \\
=& \frac{1}{d} \sum_{j} \xi^{(^{\pi}t_{1}^{j}t)}(\xi^{-1})({t_{\pi(1)}^{j}} ^{\pi}t),
\end{align*}
and by evaluating $\chi_{\Lambda}^{N(\psi)}$ in this element we get $0$, unless that $^{\pi}t\in {\fontshape{ui}\selectfont {\bf Stab} (\psi)}$. In that case, we have
\begin{align*}
\chi_{\Lambda}^{N(\psi)}(^{g}e_{1}t)&=\frac{1}{d} \sum_{j}\chi_{\Lambda}^{N(\psi)}(\xi^{(^{\pi}t_{1}^{j}t)}(\xi^{-1})({t_{\pi(1)}^{j}}  ^{\pi}t))\\
& =\frac{1}{d} \sum_{j}\psi(\xi^{(^{\pi}t_{1}^{j}t)}(\xi^{-1})t_{\pi(1)}^{j})  \chi_{\Lambda}^{\psi}(^{\pi}t)\\
& =\frac{1}{d} \sum_{j}\psi(\xi)\psi^{^{\pi}t_{1}^{j}t}(\xi^{-1})\psi(t_{\pi(1)})^{j}  \chi_{\Lambda}^{\psi}(^{\pi}t)\\
&= \frac{1}{d} \sum_{j}\psi(\xi)\psi(\xi^{-1})\psi(t_{\pi(1)})^{j}  \chi_{\Lambda}^{\psi}(^{\pi}t)\\
&=\frac{1}{d}(\sum_{j}\psi(t_{\pi(1)})^{j})
 \chi_{\Lambda}^{\psi}(^{\pi}t).
\end{align*}
Therefore, we have
\begin{equation*}
\chi_{\Lambda}^{0}(te_{1})=\frac{1}{\size{ {\fontshape{ui}\selectfont {\bf Stab} (\psi)} }}\underset{\pi \in W_{n}}{\sum} \chi_{\Lambda}^{N(\psi)}(^{\pi}te_1).
\end{equation*}
It is straightforward to see that $te_{1}$ remains invariant under conjugation by elements of $\set{\pm 1}^{n}\subset W_{n}$, since $be_{1}b^{-1}=e_{1}$ for every $b\in \set{\pm 1}^{n}$.
If $\sigma\in \S_{n}$ we have $\sigma e_{1}t\sigma^{-1}=(\sigma e_{1}\sigma^{-1})(\sigma t\sigma^{-1})=z_{k}b_{k}$ for some $k$, where we denote by $b_{k}$ the element of $\{\pm 1\}^{n}$ that has $-1$ in the position $k$ and $1$ in the remaining positions, and we denote by $z_{k}$ the idempotent $1/d \textstyle\sum_{j}t_{k}^{j}$. These are the conjugates of $e_{1}t$. Then, following the same argument of Lemma~\ref{k en tipo B}, we have
\begin{equation*}
\chi_{\Lambda}^{0}(e_{1}t)=\textstyle{\frac{1}{\size{ {\fontshape{ui}\selectfont {\bf Stab} (\psi)} }}\pare{D_{1}\chi_{\Lambda_1}(t)\psi_{1}(z)\chi_{\Lambda_2}(1)\underset{i>2}{\prod}\S^{\lambda_i}(1)+D_2\chi_{\Lambda_1}(1)\chi_{\Lambda_2}(t)\psi_{2}(z)\underset{i>2}{\prod}\S^{\lambda_i}(1)}},
\end{equation*}
where $D_{i}$ for $i=1,2$ is the number of ways to conjugate $t$ in such a way that $-1$ belongs to the $i$-block. 
With $z$ we refer to an idempotent of that block. Observe that $\psi_{1}(z)=1$, and that $\psi_{2}$ is the character with exponent $d/2$, so we have 
\[\psi_{2}(z)=\psi_{2}(\frac{1}{d}\underset{j}{\sum} t_{1}^{j})=\frac{1}{d} \underset{j}{\sum} t_{g}^{\ j d/2}=0,\]
where $t_{g}$ is a primitive $d$-th root. Therefore, we obtain
\begin{equation}
\chi_{\Lambda}^{0}(e_{1}t)=\frac{1}{\size{ {\fontshape{ui}\selectfont {\bf Stab} (\psi)} }} D_{1} \chi_{\Lambda_1}(t) \chi_{\Lambda_2}(1) \prod_{i>2} \S^{\lambda_i}(1).  \label{e_{1}b_{1}}
\end{equation}
The values $D_{1}$ and $\chi_{\Lambda_{1}}(t)$ are computed as in Lemma~\ref{en t}, and they are
\begin{equation}
D_{1}=(n-1)! (\size{\lambda_{1}}+\size{\mu_{1}}) 2^{n}, \qquad \text{and} \qquad \frac{\chi_{\Lambda_{1}}(t)}{\chi_{\Lambda_{1}}(1)}=\frac{\size{\lambda_{1}}-\size{\mu_{1}}}{\size{\lambda_{1}}+\size{\mu_{1}}}. \label{D1'}
\end{equation}
 We also have that 
\begin{equation}
\frac{2^{n}n!}{\size{{\fontshape{ui}\selectfont {\bf Stab} (\psi)}}} \chi_{\Lambda}^{\psi}(1)=\chi_{\Lambda}^{0}(1). \label{chi(1)}
\end{equation}
Finally, replacing Equations~\eqref{chi(1)} and~\eqref{D1'} into  Equation~\eqref{e_{1}b_{1}}, we obtain that
\[\frac{n\chi_{\Lambda}^{0}(e_{1}t)}{\chi_{\Lambda}^{0}(1)}=\size{\lambda_{1}}-\size{\mu_{1}}.\]
This completes the proof.
\end{proof}

\begin{corollary}
The element $T_{0}^{2}$ acts by scalar multiplication on $\chi_{\Lambda}$ by scalar
\[z_{\Lambda}=u^{l(w_{0})-k_{\Lambda}}(-1)^{g_{\Lambda}}.\]
Specializing at $u=q^{-1}$ for $d=q-1$, we obtain that $T_{w_{0}}^{2}\in \mathcal{H}(G,U)$ acts by scalar multiplication in $\chi_{\Lambda}^{q}$ by scalar
\[q^{k_{\Lambda}-l(w_{0})}(-1)^{g_{\Lambda}}.\]
We computed the values of $k_{\Lambda}$ and $g_{\Lambda}$ in Subsections~\ref{f lambda} and~\ref{g lambda}, and we know the value of $l(w_{0})$ from \textup{\cite[§~1.5]{geck2000characters}}. Then, replacing these values, 
we find  $z_{\Lambda}$ for each type. In 
Table~\ref{table:1}, using Notations~\ref{notation psi} and~\ref{alt}, we show the values of $k_{\Lambda}$ and $g_{\Lambda}$ for each type. 

\end{corollary}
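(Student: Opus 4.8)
The plan is to obtain the corollary as a direct reading of Theorem~\ref{f_{chi}}, after rewriting its exponent $f_\Lambda$ in terms of the more computable invariant $k_\Lambda$. Since $Y_{d,n}(u)$ is split in the relevant types (Subsection~\ref{son split}) and $T_0^2$ is central in $Y_{d,n}$ by Lemma~\ref{central}, Schur's lemma guarantees that $T_0^2$ acts on each irreducible $\chi_\Lambda$ by a scalar; Theorem~\ref{f_{chi}} identifies this scalar as $u^{f_\Lambda}(-1)^{g_\Lambda}$, where $f_\Lambda$ and $g_\Lambda$ are the invariants attached to $\chi_\Lambda$ in the notation fixed before Subsection~\ref{g lambda}. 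Thus the only genuine step is to replace $f_\Lambda$ by $l(w_0)-k_\Lambda$.

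First I would recall the definition of $k_\Lambda$ from Equation~\eqref{k lam} and compare it with the expression for $f_\chi$ in Equation~\eqref{ex f chi}: each summand of $f_\chi$ equals $1-\chi_0(e_i\xi_i)/\chi_0(1)$, and since the sum runs over the simple reflections of a reduced expression of $w_0$ counted with multiplicity, summing the constant term $1$ produces exactly $l(w_0)$. Hence $f_\Lambda=l(w_0)-k_\Lambda$, the identity already recorded in Subsection~\ref{f lambda}, and substituting it into the scalar of Theorem~\ref{f_{chi}} gives $z_\Lambda=u^{l(w_0)-k_\Lambda}(-1)^{g_\Lambda}$. For the specialization I would invoke the last sentence of Theorem~\ref{f_{chi}}: at $u=q^{-1}$ with $d=q-1$ the element $T_0^2$ becomes $T_{w_0}^2\in\mathcal{H}(G,U)$, acting on $\chi_\Lambda^q$ by $q^{-f_\Lambda}(-1)^{g_\Lambda}$, and rewriting $-f_\Lambda=k_\Lambda-l(w_0)$ yields the claimed scalar $q^{k_\Lambda-l(w_0)}(-1)^{g_\Lambda}$.

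Finally, to populate Table~\ref{table:1} I would substitute the per-type data already established: the values of $k_\Lambda$ from Lemmas~\ref{d=2}, \ref{k en tipo B}, \ref{ecuacion para tipo B} and~\ref{ecuacion tipo c} (with the type $A^\ast$ and $D$ cases handled by the corresponding restriction lemmas), the values of $g_\Lambda$ from Lemmas~\ref{lema a}, \ref{g en tipo B} and~\ref{g en tipo C 2}, and the lengths $l(w_0)=\binom{n}{2}$ in type $A$, $l(w_0)=n^2$ in types $B$ and $C$, and $l(w_0)=n^2-n$ in type $D$, taken from \cite[§~1.5]{geck2000characters}. I do not anticipate a real obstacle, since all the analytic work is encapsulated in those lemmas; the only point requiring care is the parity factor $(-1)^{g_\Lambda}$, which is trivial for types $B$ and $D$ (where $g_\Lambda=2N_1N_2$ is even) but equals $(-1)^{M_1M_2}$ in types $A,A^\ast$ and $(-1)^{N_2}$ in type $C$, the latter carrying the extra additive $N_2$ coming from Lemma~\ref{g en tipo C 2}.
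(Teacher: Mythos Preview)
Your proposal is correct and matches the paper's own treatment: the corollary is stated without a separate proof, since it follows immediately from Theorem~\ref{f_{chi}} together with the identity $f_{\Lambda}=l(w_{0})-k_{\Lambda}$ recorded at the start of Subsection~\ref{f lambda}, exactly as you outline. Your remarks on populating Table~\ref{table:1} from the type-by-type lemmas and on the parity of $g_{\Lambda}$ are also in line with the paper.
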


\subsection{Character values at \texorpdfstring{$T_{w_{0}}$}{t0}}\label{T_{0}}

The goal of this subsection is to find the values of the characters $\chi_{\Lambda}^{q}$ at the element $T_{w_{0}}$. We already computed the scalar with which $T_{w_{0}}^{2}$ acts. Using this, the computation of $\chi_{\Lambda}^{q}$ at $T_{w_{0}}$ is reduced to compute the value of the character $\chi_{\Lambda}^{0}$ at $\widetilde{w_{0}}$, the maximal length element of $S_{d,n}$.

We take $\mathbb{K}$ a finite Galois extension of $\C(u)$ such that $\mathbb{K}Y_{d,n}$ and $\mathbb{K}Z_{d,n}$ are split. We denote by $\phi_{1}$ and $\phi_{q}$, respectively, the specializations at $1$ and $1/q$, defined on the integral closure of $\C[u^{\pm 1}]$, which determine the bijections $d_{\phi_{1}}$ and $d_{\phi_{q}}$ of Diagram~\ref{diagrama}. We also denote by $\tau_{1}$ and $\tau_{0}$, respectively, the specializations at $1$ and $0$, which determine the bijections $d_{\tau_{1}}$ and $d_{\tau_{0}}$ of the same diagram. We fix $v,w\in \mathbb{K}$ such that $v^{2}=u$ and $w^{2}=1-2u$. Note that  $\mathbb{K}$ can be taken large enough such that $v$ and $w$ exist. Also observe that $v$ and $w$ belong to the integral closure of $\C [u^{\pm 1}]$. Then, using \textup{\cite[§~9.2.7]{geck2000characters}}, we can normalize the specializations in such a way that 
\[\phi_{q}(v)=\sqrt{q^{-1}},\quad \phi_{1}(v)=1,\quad \tau_{1}(w)=-i,\quad \text{and} \quad \tau_{0}(w)=1.\]
From now on the chosen specializations will be of this form.

\begin{lemma}\label{T0 1}
Let $\chi_{\Lambda}\in \Irr(\mathbb{K}Y_{d,n})$. Keeping the above notation, we have that \[\chi_{\Lambda}(T_{0})=\chi_{\Lambda}^{1}(\sigma_0)v^{f_{\Lambda}}=\chi_{\Lambda}^{q}(T_{w_0})v^{f_{\Lambda}}\sqrt{q}^{f_{\Lambda}}.\]
\end{lemma}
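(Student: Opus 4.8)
The plan is to leverage Theorem~\ref{f_{chi}}, which already determines the action of the square $T_0^2$, and to recover the action of $T_0$ itself by a rescaling trick. Fix $(V,\pi)\in \Irr(\mathbb{K}Y_{d,n})$ with character $\chi_\Lambda$ and recall that $u=v^2$. By Theorem~\ref{f_{chi}} the central element $T_0^2$ acts on $V$ by the scalar $y_{\chi_\Lambda}=u^{f_\Lambda}(-1)^{g_\Lambda}=v^{2f_\Lambda}(-1)^{g_\Lambda}$, so that $\pi(T_0)^2=v^{2f_\Lambda}(-1)^{g_\Lambda}\Id$.

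Since $v$ is invertible in $\mathbb{K}$, I would set $S=v^{-f_\Lambda}\pi(T_0)$, giving $S^2=(-1)^{g_\Lambda}\Id$. Thus the minimal polynomial of $S$ divides $X^2-(-1)^{g_\Lambda}$, a polynomial with \emph{constant} coefficients, so every eigenvalue of $S$ lies in the fixed finite set $\{1,-1\}$ (when $g_\Lambda$ is even) or $\{i,-i\}$ (when $g_\Lambda$ is odd). Hence $\tr(S)$ is a sum of $\chi_\Lambda(1)$ such eigenvalues and is therefore a constant $c\in\C$ independent of $u$, and
\[\chi_\Lambda(T_0)=\tr\pi(T_0)=v^{f_\Lambda}\tr(S)=c\,v^{f_\Lambda}.\]

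To identify $c$, I would specialize this identity at $u=1$: under $\phi_1$ we have $v\mapsto 1$, $T_0\mapsto\sigma_0$, and $d_{\phi_1}(\chi_\Lambda)=\chi_\Lambda^{1}$, so that $c=\phi_1(c\,v^{f_\Lambda})=\phi_1(\chi_\Lambda(T_0))=\chi_\Lambda^{1}(\sigma_0)$, which gives the first equality $\chi_\Lambda(T_0)=\chi_\Lambda^{1}(\sigma_0)\,v^{f_\Lambda}$. Specializing instead at $u=q^{-1}$, where $v\mapsto\sqrt{q^{-1}}$, $T_0\mapsto T_{w_0}$, and $d_{\phi_q}(\chi_\Lambda)=\chi_\Lambda^{q}$, yields $\chi_\Lambda^{q}(T_{w_0})=\chi_\Lambda^{1}(\sigma_0)\,\sqrt{q}^{-f_\Lambda}$, i.e. $\chi_\Lambda^{1}(\sigma_0)=\chi_\Lambda^{q}(T_{w_0})\,\sqrt{q}^{f_\Lambda}$; substituting this into the first equality produces $\chi_\Lambda(T_0)=\chi_\Lambda^{q}(T_{w_0})\,v^{f_\Lambda}\sqrt{q}^{f_\Lambda}$, as claimed.

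The only genuinely non-computational step is the rescaling observation: dividing $\pi(T_0)$ by $v^{f_\Lambda}$ turns it into an involution up to sign, which forces its trace to be a $u$-independent integer (or $i$ times an integer). Everything else is bookkeeping — in particular the compatibility $\phi_1(\chi_\Lambda(T_0))=\chi_\Lambda^{1}(\sigma_0)$ and $\phi_q(\chi_\Lambda(T_0))=\chi_\Lambda^{q}(T_{w_0})$ is exactly the commutativity of the specialization maps built into the Tits deformation framework of Subsection~\ref{gen yoko}, using that $T_0$ specializes to $\sigma_0$ and to $T_{w_0}$ respectively.
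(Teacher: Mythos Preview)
Your argument is correct and follows essentially the same route as the paper: both use Theorem~\ref{f_{chi}} to conclude that the eigenvalues of $\pi(T_0)$ are $\pm v^{f_\Lambda}i^{g_\Lambda}$ (equivalently, that $v^{-f_\Lambda}\pi(T_0)$ has eigenvalues in $\{\pm1\}$ or $\{\pm i\}$), so the trace is a $u$-independent constant times $v^{f_\Lambda}$, and that constant is then identified by the two specializations $\phi_1$ and $\phi_q$. Your rescaling by $v^{-f_\Lambda}$ is just a clean repackaging of the paper's explicit eigenvalue computation $\sigma_j=\eta_j\,v^{f_\Lambda}i^{g_\Lambda}$ with $\eta_j=\pm1$.
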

\begin{proof}
Let $\sigma_1,
\ldots,\sigma_d$ be the eigenvalues of $T_{0}$ taken with the multiplicity on its characteristic polynomial. The eigenvalues of $T_{0}^{2}$ are $\sigma_{1}^{2},
\ldots, \sigma_{d}^{2}$, with the same multiplicities, then by Theorem~\ref{f_{chi}} we have
\[\sigma_{j}^{2}=u^{f_{\Lambda}}(-1)^{g_{\Lambda}}=(v^{f_{\Lambda}}i^{g_{\Lambda}})^{2}=a^{2} \quad \text{for all $j$}, \ \text{where} \ a=v^{f_{\Lambda}}i^{g_{\Lambda}}.\]
Hence, for each $i$ there exists $\eta_i=\pm 1$ such that $\sigma_{i}=\eta_{i}a$. Therefore, we have $\chi_{\Lambda}(T_{0})=\textstyle \sum_{i}\eta_i a$. Taking the specialization $\phi_{1}$, we obtain
\[\chi_{\Lambda}^{1}(\sigma_0)=\phi_{1}(\chi_{\Lambda}(T_{0}))=\textstyle \sum_{i}\eta_i\phi_{1}(a)=\sum_{i} \eta_i i^{g_{\Lambda}},\]
which gives us the first equality. When specializing at $\phi_{q}$, we obtain
\[\chi_{\Lambda}^{q}(T_{w_0})=\phi_{q}(\chi(T_{0}))=\textstyle \sum_{i}\eta_i \phi_{q}(a)=\sum_{i}\eta_i i^{g_{\Lambda}} \sqrt{q}^{-f_{\Lambda}},\]
which gives us the second equality.
\end{proof}

\begin{lemma}\label{T0 2}Let $\chi_{\Lambda}'\in \Irr(\mathbb{K}Z_{d,n})$. Then, with the above notation, we have 
\[\chi_{\Lambda}'(T_{0}')=\chi_{\Lambda}^{0}(\widetilde{w_{0}}) w^{g_{\Lambda}}=\chi_{\Lambda}^{1}(\sigma_{0})w^{g_{\Lambda}}i^{g_{\Lambda}}.\]  
\end{lemma}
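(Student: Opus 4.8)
The plan is to run the exact analogue of the argument in Lemma~\ref{T0 1}, with Theorem~\ref{teo g} playing the role that Theorem~\ref{f_{chi}} played there. First I would let $\sigma_{1},\ldots,\sigma_{m}$ denote the eigenvalues, taken with multiplicity (so $m=\chi_{\Lambda}^{0}(1)$), of the endomorphism induced by $T_{0}'$ in the representation $\chi_{\Lambda}'$. By the remark following Lemma~\ref{central}, the element $T_{0}'^{2}$ is central in $\mathbb{K}Z_{d,n}$, so Schur's lemma forces it to act as a scalar; by Theorem~\ref{teo g} that scalar is $z_{\chi_{\Lambda}'}=(1-2u)^{g_{\Lambda}}$. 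Since $w^{2}=1-2u$, we have $(1-2u)^{g_{\Lambda}}=\pare{w^{g_{\Lambda}}}^{2}$, and because the eigenvalues of $T_{0}'^{2}$ are $\sigma_{1}^{2},\ldots,\sigma_{m}^{2}$, each equal to this common scalar, every $\sigma_{j}$ must be of the form $\eta_{j}w^{g_{\Lambda}}$ with $\eta_{j}=\pm 1$. Summing over the eigenvalues then gives $\chi_{\Lambda}'(T_{0}')=\pare{\sum_{j}\eta_{j}}w^{g_{\Lambda}}$.

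The two claimed equalities follow by applying the two normalized specializations. Recall that $\tau_{0}(T_{0}')=\widetilde{w_{0}}$ and $\tau_{1}(T_{0}')=\sigma_{0}$, with $\tau_{0}(w)=1$ and $\tau_{1}(w)=-i$ fixed in the preamble. Applying $\tau_{0}$ yields $\chi_{\Lambda}^{0}(\widetilde{w_{0}})=\pare{\sum_{j}\eta_{j}}\tau_{0}(w)^{g_{\Lambda}}=\sum_{j}\eta_{j}$, which, combined with the displayed expression for $\chi_{\Lambda}'(T_{0}')$, gives the first equality $\chi_{\Lambda}'(T_{0}')=\chi_{\Lambda}^{0}(\widetilde{w_{0}})\,w^{g_{\Lambda}}$. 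Applying $\tau_{1}$ yields $\chi_{\Lambda}^{1}(\sigma_{0})=\pare{\sum_{j}\eta_{j}}(-i)^{g_{\Lambda}}$; since $1/(-i)=i$, this rearranges to $\sum_{j}\eta_{j}=\chi_{\Lambda}^{1}(\sigma_{0})\,i^{g_{\Lambda}}$, and substituting back gives $\chi_{\Lambda}'(T_{0}')=\chi_{\Lambda}^{1}(\sigma_{0})\,w^{g_{\Lambda}}i^{g_{\Lambda}}$, the second equality.

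I do not expect a genuine obstacle here: the statement is the $Z_{d,n}$-counterpart of Lemma~\ref{T0 1}, and essentially all of its content is already packaged in Theorem~\ref{teo g} together with the centrality of $T_{0}'^{2}$. The one point that requires care is purely bookkeeping of the chosen roots: one must use the \emph{normalized} specializations (so that $w\mapsto 1$ under $\tau_{0}$ and $w\mapsto -i$ under $\tau_{1}$) for the factors $w^{g_{\Lambda}}$ and $i^{g_{\Lambda}}$ to come out consistently. The only subtlety worth double-checking is that the common value $\sigma_{j}^{2}$ is a perfect square in $\mathbb{K}$ --- which is precisely why an element $w$ with $w^{2}=1-2u$ was adjoined to $\mathbb{K}$; this guarantees the splitting $\sigma_{j}=\eta_{j}w^{g_{\Lambda}}$ with signs $\eta_{j}=\pm 1$, and no information about the individual signs is needed, since only their sum enters the final expressions.
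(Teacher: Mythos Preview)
Your proposal is correct and follows essentially the same argument as the paper's own proof: diagonalize $T_{0}'$, use Theorem~\ref{teo g} to identify each eigenvalue as $\pm w^{g_{\Lambda}}$, and then read off the two equalities from the normalized specializations $\tau_{0}(w)=1$ and $\tau_{1}(w)=-i$. The only differences are cosmetic (names of the eigenvalues and signs), and your write-up is in fact slightly more explicit about why the second equality follows from the rearrangement $\sum_{j}\eta_{j}=\chi_{\Lambda}^{1}(\sigma_{0})\,i^{g_{\Lambda}}$.
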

\begin{proof}
Let $\gamma_{1},
\ldots,\gamma_{k}$ be the eigenvalues of $T_{0}'$. Then, $\gamma_{1}^{2},
\ldots,\gamma_{k}^{2}$ are the eigenvalues of $T_{0}'^{2}$, and applying Theorem~\ref{g_{chi}} we have 
\[\gamma_{i}^{2}=(1-2u)^{g_{\Lambda}}=(w^{g_{\Lambda}})^{2}=b^{2}\quad \text{for all $i$},\ \text{where}\ b=w^{g_{\Lambda}}.\]
Hence, for each $i$ there exists $\kappa_{i}=\pm 1$ such that $\gamma_{i}=\kappa_{i}b$. Then $\chi_{\Lambda}'(T_{0}')=\textstyle \sum_{i}\kappa_{i}b$. Taking the specialization $\tau_{0}$, we obtain
\[\chi_{\Lambda}^{0}(\widetilde{w_{0}})=\tau_{0}(\chi_{\Lambda}'(T_0'))=\textstyle \underset{i}{\sum}\kappa_{i}\tau_{0}(b)=\underset{i}{\sum}\kappa_{i},\]
which gives us the first equality. If we take the specialization $\tau_{1}$, we obtain
\[\chi_{\Lambda}^{1}(\sigma_{0})=\tau_{1}(\chi_{\Lambda}'(T_0'))=\textstyle \underset{i}{\sum}\kappa_{i}\phi_{1}(b)=\textstyle \underset{i}{\sum}\kappa_{i}(-i)^{g_{\Lambda}},\]
which gives us the second equality.
\end{proof}

\noindent{}
We will now compute $\chi_{\Lambda}^{0}(\widetilde{w_{0}})$ for each type.

\medskip
{\bf Type $A$:} In this case, the maximal length element is a product of $\textstyle \floor{ \frac{n}{2}}$ disjoint transpositions. Let $\Lambda=(\lambda_1,\ldots,\lambda_d)\in \mathcal{Q}_{d,n}$, and let $n_{i}=\size{\lambda_{i}}$. We want to compute $\chi_{\Lambda}^{0}$ at an element that is a product of $\textstyle \floor{\frac{n}{2}}$ disjoint transpositions. Since $\widetilde{w_{0}} \in \mathbb{S}_n$, using the same argument as in Lemma~\ref{k en tipo B}, we obtain that 
\begin{equation}\chi_{\Lambda}^{0}(\widetilde{w_{0}})=\Ind_{{\fontshape{ui}\selectfont {\bf Stab} (\psi)}}^{\mathbb{S}_n}\chi_{\Lambda}^{\psi}(w_0).\label{w0 tipo A}\end{equation}

\medskip
{\bf Case~1.} Assume that $n$ is even. 
We divide it into subcases.

\medskip 
{\bf Case~1.1.} Assume that not all the sizes of the blocks of $\psi$ are even. Since any conjugate of $w_{0}$ is the product of $n/2$ disjoint transpositions, it is clear that no conjugate of $w_{0}$ belongs to ${\fontshape{ui}\selectfont {\bf Stab} (\psi)}$. Therefore, we have that $\chi_{\Lambda}^{0}(\widetilde{w_{0}})=0$.

\medskip
{\bf Case~1.2.} Assume that the sizes of all the blocks of $\psi$ are even. Each time that a conjugate of $w_{0}$ belongs to ${\fontshape{ui}\selectfont {\bf Stab} (\psi)}$, we have a product of $n_{j}/2$ disjoint transpositions in the $j$-block, hence $\chi_{\Lambda}^{\psi}$ has the same value at all the conjugates of $w_{0}$. Then, using Equation~\eqref{w0 tipo A}, we have
\begin{equation}
\chi_{\Lambda}^{0}(\widetilde{w_{0}})=\frac{C}{\Lambda!} \S^{\lambda_1}(\sigma_1)
\cdots \S^{\lambda_d}(\sigma_{d}),\label{w0 en tipo A 2} 
\end{equation}
where 
$C$ is the number of ways to conjugate $w_0$ and obtain an element in ${\fontshape{ui}\selectfont {\bf Stab} (\psi)}$, and each of the $\sigma_i$ denotes a product of $n_{i}/2$ disjoint transpositions. It is straightforward to check that 
\[C=\frac{\frac{n}{2}! \Lambda!}{\frac{n_{1}}{2}!
\cdots \frac{n_{d}}{2}!}.\]
Replacing this in Equation~\eqref{w0 en tipo A 2}, we obtain
\[
\chi_{\Lambda}^{0}(\widetilde{w_{0}})=\frac{\frac{n}{2}!}{\frac{\size{\lambda_1}}{2}!
\cdots  \frac{\size{\lambda_d }}{2}!}\S^{\lambda_1}(\sigma_1)
\cdots  \S^{\lambda_d}(\sigma_d).
\]

\medskip
{\bf Case~2.} Assume that $n$ is odd. In this case, $\widetilde{w_{0}}$ is the product of $(n-1)/2$ disjoint transpositions. 

\medskip
{\bf Case~2.1.} Assume that there is not exactly one block of $\psi$ of odd size. Since any conjugate of $w_{0}$ is the product of $(n-1)/2$ disjoint transpositions, it is clear that no conjugate of $w_{0}$ belongs to ${\fontshape{ui}\selectfont {\bf Stab} (\psi)}$. Therefore, we have $\chi_{\Lambda}^{0}(\widetilde{w_{0}})=0$.

\medskip
{\bf Case~2.2.} Assume that there is exactly one block of $\psi$ of odd size, and assume without loss of generality that it is the last block, corresponding to $\lambda_d$. Then, we have
\begin{equation*}
\chi_{\Lambda}^{0}(\widetilde{w_{0}})=\frac{C\rq}{\Lambda!}\S^{\lambda_1}(\sigma_1)
\cdots \S^{\lambda_d}(\sigma_d),
\end{equation*}
where $C\rq$ is the number of ways to conjugate $w_0$ and obtain an element in ${\fontshape{ui}\selectfont {\bf Stab} (\psi)}$, and each of the $\sigma_i$ denote a product of $\textstyle \floor{\frac{n_i}{2}} $ disjoint transpositions. It is straightforward to compute $C\rq$, and we have that
\begin{equation*}\chi_{\Lambda}^{0}(\widetilde{w_{0}})=\frac{((n-1)/2)!}{(\size{\lambda_{1}}/2)!
\cdots (\size{\lambda_{d-1}}/2)! ((\size{\lambda_{d}}-1)/2)!}\S^{\lambda_1}(\sigma_1)
\cdots \S^{\lambda_d}(\sigma_d).\end{equation*}

In Table~\ref{table:2}, we show the value of $\chi_{\Lambda}^{0}(\widetilde{w_{0}})$ for type $A$. 

\medskip
{\bf Type $A^{\ast}$:} Let $\mu\in \Irr(S_{d,n}^{A^{\ast}})$ be a factor in the restriction of $\chi_{\Lambda}^{0}\in \Irr(S_{d,n}^{A})$, for $\Lambda \in \mathcal{Q}_{d,n}$. Let us compute the value $\mu(\widetilde{w_{0}})$. We separate into cases.

\medskip
{\bf Case~1.} Assume that $n$ is odd. Notice that for $t=(t_{1},
\ldots,t_{n})\in C_{d}^{n}$ we have 
\[t\widetilde{w_{0}}t^{-1}=\widetilde{w_{0}}t_{w_{0}}t^{-1}
= \widetilde{w_{0}} (t_{1}^{-1}t_{n},t_{2}^{-1}t_{n-1}, \ldots,
t_{1}t_{n}^{-1}).
\]
Observe that if we only change the central coordinate of $t$, the element $t\widetilde{w_{0}}t^{-1}$ would not change. Using this, we can see that all the elements $\{t\widetilde{w_{0}}t^{-1}:t\in C_{d}^{n}\}$ are conjugated in $S_{d,n}^{A^{\ast}}$. Therefore, all the conjugates of $\widetilde{w_{0}}$ in $S_{d,n}^{A}$ are also conjugated in $S_{d,n}^{A^{\ast}}$. Then, we have
\[\chi_{\Lambda}^{0}(\widetilde{w_{0}})=\sum_{i=1}^{d/o(\Lambda)}\mu^{g_{i}}(\widetilde{w_{0}})=\frac{d}{o(\Lambda)}\mu(\widetilde{w_{0}}),\] 
where the sum is taken over all conjugates of $\mu$. Therefore, we have  
\[u(\widetilde{w_{0}})=\frac{o(\Lambda)}{d}\chi_{\Lambda}^{0}(\widetilde{w_{0}}).\]

\medskip
{\bf Case~2.} Assume that $n$ is even. We will prove the following claim.

\medskip
{\bf Claim.} Let $t,s\in C_{d}^{n}$. The elements $t\widetilde{w_{0}}t^{-1}$ and $s\widetilde{w_{0}}s^{-1}$ are conjugated in $S_{d,n}^{A^{\ast}}$ if and only if $\det(ts)\in C_{d}^{2}$.

\medskip
Assume that $\det(ts)\in C_{d}^{2}$. Let us denote by $r\in C_{d}^{n}$ the element with $r_{i}=t_{i}t_{n-i}^{-1}s_{i}^{-1}s_{n-i}$ for $i\leq n/2$ and $r_{i}=1$ for $i>n/2$.
Since $\det(ts)\in C_{d}^{2}$, we have that $\det(r)\in C_{d}^{2}$. Hence, there exists $x\in C_{d}$ such that $\det(r)=x^{2}$, and let $r\rq=r\cdot (x^{-1},1,\ldots,1,x^{-1})$. Then, we have that $\det(r\rq)=1$, and $r\rq s \widetilde{w_{0}}s^{-1}{r\rq}^{-1}=t\widetilde{w_{0}}t^{-1}$. Hence, $t\widetilde{w_{0}}t^{-1}$ and $s\widetilde{w_{0}}s^{-1}$ are conjugated in $S_{d,n}^{A^{\ast}}$.

Now assume that there exists $u=vz\in S_{d,n}^{A^{\ast}}$ such that $ut\widetilde{w_{0}}t^{-1}u^{-1}=s\widetilde{w_{0}}s^{-1}$, where $v\in \mathbb{S}_{n}$ and $z\in C_{d}^{n}$ with $\det(z)=1$. Then, we have that 
\[\widetilde{w_{0}}s_{w_{0}}s^{-1}=s\widetilde{w_{0}}s^{-1}=vzt\widetilde{w_{0}}t^{-1}z^{-1}v^{-1}=(v\widetilde{w_{0}}v^{-1})(vz_{w_{0}}t_{w_{0}}t^{-1}z^{-1}v^{-1}),\]
which implies that $\widetilde{w_{0}}=v\widetilde{w_{0}}v^{-1}$ and $s_{w_{0}}s^{-1}=vz_{w_{0}}t_{w_{0}}t^{-1}z^{-1}v^{-1}$. From the first equality, we deduce that the permutation $v$ sends any transposition of the form $(i,n-i)$ to another transposition of the form $(j,n-j)$. Hence, 
the first $n/2$ coordinates of $vz_{w_{0}}t_{w_{0}}t^{-1}z^{-1}v^{-1}$ are a permutation of 
\[(t_{1}^{-1}z_{1}^{-1}t_{n}z_{n},
\ldots,
t_{\frac{n}{2}}^{-1}z_{\frac{n}{2}}^{-1}t_{\frac{n}{2}+1}z_{\frac{n}{2}+1}),\]
but also with the possibility of inverting them. From the second equality, we have that these $n/2$ coordinates are equal to $(s_{1}^{-1}s_{n},
\ldots,
s_{\frac{n}{2}}^{-1}s_{\frac{n}{2}+1}).$ From this, we deduce that $\det(zst)\in C_{d}^{2}$, but since $\det(z)=1$ we obtain that $\det(ts)\in C_{d}^{2}$. This proves the claim. 

\medskip
Using the claim, it is easy to infer that there are exactly two conjugacy classes among the elements $\{y\widetilde{w_{0}}y^{-1}:y\in S_{d,n}^{A}\}$, with representatives $\widetilde{w_{0}}$ and $g\widetilde{w_{0}}g^{-1}$, where $g=(h,1,\ldots,1)$ and $h\notin C_{d}^{2}$. In particular, the above argument does not work for $n$ even. We were not able to compute the value of $\mu(\widetilde{w_{0}})$ for this case.

\begin{question} Find the value of $\mu(\widetilde{w_{0}})$ for $\mu\in \Irr(S_{d,n}^{A^{\ast}})$ and $n$ even.
\end{question}

{\bf Types $B$ and $C$:}
In this case $\widetilde{w_{0}}=-\Id \in W_{n}$, and the only conjugate of $-\Id$ is itself.

\medskip
{\bf Case~1.} Assume that $\Lambda$ is not of the form $(\Lambda_1,\Lambda_2)=((\lambda_1,\mu_1),(\lambda_2,\mu_2))$. Then, it is straightforward to see that $-\Id$ does not belong to ${\fontshape{ui}\selectfont {\bf Stab} (\psi)}=W_{n_1}\times W_{n_2}\times \mathbb{S}_{n_{3}}\times   \cdots  \times \mathbb{S}_{n_{(d+2)/2}}$. Since the only conjugate of $-\Id$ is $-\Id$, we obtain that $\chi_{\Lambda}^{0}(-\Id)=0$.

\medskip
{\bf Case~2.} Assume that $\Lambda=(\Lambda_1,\Lambda_2)=((\lambda_1,\mu_1),(\lambda_2,\mu_2))$. If $n_{i}=\size{\lambda_{i}}+\size{\mu_{i}}$ 
for $i=1,2$, we have 
\[\chi_{\Lambda}^{0}(-\Id)=\Ind_{W_{n_1}\times W_{n_2} }^{W_n} \chi_{\lambda_{1},\mu_{1}}\otimes  \chi_{\lambda_{2},\mu_{2}}(-\Id).\]
Using Equation~\eqref{chi en tipo B} for $\chi_{\lambda,\mu}$, we have that
\[\chi_{\Lambda}^{0}(-\Id)=\Ind_{\prod_{i=1,2} W_{\lvert\lambda_{i}\rvert} \times W_{\lvert\mu_{i}\rvert}}^{W_{n}}  \S^{\lambda_1}\otimes \pare{\S^{\mu_1}\otimes \sgn}\otimes \S^{\lambda_2}\otimes \pare{\S^{\mu_2}\otimes \sgn}(-\Id).\] 
Since the only conjugate of $-\Id$ is itself, we obtain that
\begin{align*}
 \chi_{\Lambda}^{0}(-\Id)&=\binom{n}{\Lambda}  \S^{\lambda_1}(-\Id) \pare{\S^{\mu_1}\otimes \sgn}(-\Id)  \S^{\lambda_2}(-\Id) \pare{\S^{\mu_2}\otimes \sgn}(-\Id) \nonumber\\
&=\binom{n}{\Lambda}(-1)^{\size{\mu_1}+\size{\mu_2}}\  
\S^{\Lambda}(1).
\end{align*}

In Table~\ref{table:3}, we show the value of $\chi_{\Lambda}^{0}(-\Id)$ for types $B$ and $C$. 
As a consequence of these computations, we can see that the algebras $Z_{d,n}^{C}$ and $Y_{d,n}^{C}$ are not split over $\CC(u)$.

\begin{remark}
Let $\Lambda=(\Lambda_{1},\Lambda_{2})\in \mathcal{R}_{d,n}$. Using Lemmas~\ref{T0 2} and~\ref{g en tipo C 2}, we have that 
\[\chi_{\Lambda}'(T_{0}')=\chi_{\Lambda}^{0}(-\Id)w^{2N_{1}N_{2}+N_{2}},\]
where $w^{2}=1-2u$ and  $N_{1}$ and $N_{2}$ are the number of entries of $\psi$ that belong to $C_{d}^{2}$ and $C_{d}\backslash C_{d}^{2}$, respectively. If $N_{2}$ is not even, we have that not all characters of $Z_{d,n}^{C}$ are defined over $\C(u)$, which implies that $Z_{d,n}^{C}$ is not split over $\C(u)$. On the other hand, using Lemmas~\ref{T0 1} and~\ref{T0 2}, we have that 
\[\chi_{\Lambda}(T_{0})=\chi_{\Lambda}^{0}(-\Id)(-i)^{2N_{1}N_{2}+N_{2}}v^{l(w_{0})-k_{\Lambda}^{C}},\]
where $v^{2}=u$, and $k_{\Lambda}^{C}$ denotes the value of $k_{\Lambda}$ for type $C$, as in Lemma~\ref{ecuacion tipo c}. Observe that for type $B$, using the splitness of $Y_{d,n}^{B}$ over $\C(u)$, we have that $l(w_{0})-k_{\Lambda}^{B}$ is even, where $k_{\Lambda}^{B}$ denotes the value of $k_{\Lambda}$ for type $B$, as in Lemma~\ref{ecuacion para tipo B}. Since $k_{\Lambda}^{B}=k_{\Lambda}^{C}+\size{\Lambda_{2}}^{\text{alt}}$, we have that $l(w_{0})-k_{\Lambda}^{C}$ is odd for the cases in which $\size{\Lambda_{2}}^{\text{alt}}$ is odd. For these cases,  the value of $\chi_{\Lambda}(T_{0})$ is not defined over $\C(u)$ for type $C$, which implies that $Y_{d,n}^{C}$ is not split over $\C(u)$.
\end{remark}

\medskip
{\bf Type $D$:}
Let $\mu\in \Irr(S_{d,n}^{D})$ be a factor of the restriction of  $\chi_{\Lambda}^{0}\in\Irr (S_{d,n}^{B})$, for $\Lambda=(\Lambda_{1},\Lambda_{2},\lambda_{3},\ldots,\lambda_{(d+2)/2})\in \mathcal{R}_{d,n}$, and $\Lambda_{i}=(\lambda_{i},\mu_{i})$ for $i=1,2$. If $\mu$ is the restriction of $\chi_{\Lambda}^{0}$, we have that $\mu(\widetilde{w_{0}})=\chi_{\Lambda}^{0}(\widetilde{w_{0}})$. Assume now that $\chi_{\Lambda}^{0}$ has another irreducible factor in its restriction, which we call $\eta$. By Subsection~\ref{caso ciclico}, we know that $\eta=\mu^{t}$, where  $t=\Diag(-1,1,\ldots,1)$. Then, we have
\[\chi_{\Lambda}^{0}(\widetilde{w_{0}})=\mu(\widetilde{w_{0}})+\eta(\widetilde{w_{0}})=\mu(\widetilde{w_{0}})+\mu^{t}(\widetilde{w_{0}})=\mu(\widetilde{w_{0}})+\mu(\widetilde{w_{0}})=2\mu(\widetilde{w_{0}}),\]
since  $\widetilde{w_{0}}\in \set{\pm 1}^{n}$ and $t\widetilde{w_{0}}t^{-1}=\widetilde{w_{0}}$. Then, we have 
\[\mu(\widetilde{w_{0}})=\frac{\chi_{\Lambda}^{0}(\widetilde{w_{0}})}{2}, \qquad \text{or} \qquad \ \mu(\widetilde{w_{0}})=\chi_{\Lambda}^{0}(\widetilde{w_{0}}), \]
depending on whether $(\lambda_{1},\lambda_{2})= (\mu_{1},\mu_{2})$ or not, respectively. Therefore, to compute the value of $\mu(\widetilde{w_{0}})$ we need to find the value of $\chi_{\Lambda}^{0}(\widetilde{w_{0}})$. We separate into cases.

\medskip
{\bf Case~1.} Assume that $n$ is even. In this case, we have $\widetilde{w_{0}}=-\Id$, and we already found the value of $\chi_{\Lambda}^{0}(-\Id)$ for Type~B.

\medskip
{\bf Case~2.} Assume that $n$ is odd. In this case, we have  $\widetilde{w_{0}}=\Diag(1,-1,-1,\ldots,-1)$, and let us denote this element by $b$. Since  $b\in W_{n}$, we have 
that
\begin{equation}\chi_{\Lambda}^{0}(b)=\Ind_{{\fontshape{ui}\selectfont {\bf Stab} (\psi)}}^{W_n}\chi_{\Lambda}^{\psi}(b).\end{equation}

\medskip
{\bf Case~2.1.} Assume that $\Lambda $ is not of the form $(\Lambda_{1},\Lambda_{2})$ or $(\Lambda_{1},\Lambda_{2},\lambda_{3})$, with $\Lambda_{i}=(\lambda_{i},\mu_{i})$ for $i=1,2$, and $\size{\lambda_{3}}=1$. Since the conjugates of $b$ are the diagonal elements with exactly one $1$, it is clear that no conjugate of $b$ belongs to ${\fontshape{ui}\selectfont {\bf Stab} (\psi)}$. Therefore, we have that $\chi_{\Lambda}^{0}(b)=0$.

\medskip
{\bf Case~2.2.1.} Assume that  $\Lambda $ is of the form $(\Lambda_{1},\Lambda_{2})$, with $\Lambda_{i}=(\lambda_{i},\mu_{i})$ for $i=1,2$. Then, we have
\begin{align*}
 \chi_{\Lambda}^{0}(b)&=\Ind_{\prod_{i=1,2} W_{\lvert\lambda_{i}\rvert} \times W_{\lvert\mu_{i}\rvert}}^{W_{n}} \textstyle{\S^{\lambda_1}\otimes (\S^{\mu_1}\otimes \sgn)\otimes \S^{\lambda_2}\otimes (\S^{\mu_2}\otimes \sgn)(b)}\nonumber \\
&=\frac{\S^{\Lambda}(1)}
{\Lambda!}
(C_{1}-C_{2}+C_{3}-C_{4}),
\end{align*}
where $C_{1},C_{2},C_{3}$, and $C_{4}$ are the number of ways to conjugate $b$ by an element of $\S_{n}$ and obtain an element in which its only $1$ 
belongs to the blocks corresponding to  $\lambda_{1},\mu_{1},\lambda_{2}$, and $\mu_{2}$, respectively. We 
are multiplying them by the value of $\chi_{\Lambda}^{\psi}$ at these elements. 
Observe that  $\{\pm 1\}^{n}$ acts trivially by conjugation on $b$, 
which is why it does not appear in the equation.
The 
numbers $C_i$ for $i=1,2,3,4$ are 
\[\textstyle{(n-1)!\size{ \lambda_1 },\quad (n-1)!\size{ \mu_1 }, \quad (n-1)!\size{ \lambda_2 },\quad \text{and} \quad (n-1)!\size{ \mu_2 }},\]
respectively. Therefore, we obtain that $\chi_{\Lambda}^{0}(b)$ is equal to
\[\frac{(n-1)!\S^{\Lambda}(1)}{\Lambda!}
\size{\Lambda}^{\text{alt}}.
\]

\medskip
{\bf Case~2.2.2.} Assume that  $\Lambda $ is of the form $(\Lambda_{1},\Lambda_{2},\lambda_{3})$ with $\Lambda_{i}=(\lambda_{i},\mu_{i})$ for $i=1,2$, and $\size{\lambda_{3}}=1$.
For this case, we have that the conjugates of $b$ should have the number $1$ in the position corresponding to $\lambda_{3}$. Then, this situation is equivalent to having $-\Id$ and $n-1$ even, which we already computed.

\medskip
In Table~\ref{table:4}, we show the value of $\mu(\widetilde{w_{0}})$ for type $D$.

\begin{corollary}
Using Lemmas~\ref{T0 1} and~\ref{T0 2}, we have that
\[\chi_{\Lambda}^{q}(T_{w_{0}})=\chi_{\Lambda}^{0}(\widetilde{w_{0}})(-i)^{g_{\Lambda}}\sqrt{q^{-f_{\Lambda}}}.\]
Thanks to the calculations of $g_{\Lambda},f_{\Lambda}$, and $\chi_{\Lambda}^{0}(\widetilde{w_{0}})$, which we did in Subsections~\ref{g lambda},~\ref{f lambda} and~\ref{T_{0}}, we can easily find this value for each 
classical type.
\end{corollary}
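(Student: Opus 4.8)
The plan is to obtain the formula purely by chaining together the two lemmas just established, since each of them already isolates one of the two families of specializations: Lemma~\ref{T0 1} handles the passage through $\phi_{q}$ on the $Y_{d,n}$-side, while Lemma~\ref{T0 2} handles the passage through $\tau_{0},\tau_{1}$ on the $Z_{d,n}$-side. The crucial bookkeeping observation is that the intermediate quantity $\chi_{\Lambda}^{1}(\sigma_{0})$ denotes literally the same complex number in both lemmas: by the notation fixed just before Subsection~\ref{g lambda}, the character $\chi_{\Lambda}^{1}\in\Irr(N_{d,n})$ is simultaneously $d_{\phi_{1}}(\chi_{\Lambda})$ and $d_{\tau_{1}}(\chi_{\Lambda}')$, so evaluating it at $\sigma_{0}\in N_{d,n}$ yields one unambiguous scalar. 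This shared value is exactly what lets us glue the two computations.

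First I would read off from Lemma~\ref{T0 1}, by cancelling the common factor $v^{f_{\Lambda}}$ in its two displayed equalities, the relation
\[
\chi_{\Lambda}^{q}(T_{w_0})\,\sqrt{q}^{\,f_{\Lambda}}=\chi_{\Lambda}^{1}(\sigma_{0}),
\qquad\text{that is}\qquad
\chi_{\Lambda}^{q}(T_{w_0})=\chi_{\Lambda}^{1}(\sigma_{0})\,\sqrt{q^{-f_{\Lambda}}}.
\]
Next I would invoke Lemma~\ref{T0 2}, whose two displayed equalities give, after cancelling the common $w^{g_{\Lambda}}$, the identity $\chi_{\Lambda}^{0}(\widetilde{w_{0}})=\chi_{\Lambda}^{1}(\sigma_{0})\,i^{g_{\Lambda}}$; solving for the intermediate value and using $i^{-1}=-i$ gives
\[
\chi_{\Lambda}^{1}(\sigma_{0})=\chi_{\Lambda}^{0}(\widetilde{w_{0}})\,i^{-g_{\Lambda}}=\chi_{\Lambda}^{0}(\widetilde{w_{0}})\,(-i)^{g_{\Lambda}}.
\]
Substituting this into the previous display produces
\[
\chi_{\Lambda}^{q}(T_{w_0})=\chi_{\Lambda}^{0}(\widetilde{w_{0}})\,(-i)^{g_{\Lambda}}\sqrt{q^{-f_{\Lambda}}},
\]
which is exactly the claimed identity.

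I expect no genuine obstacle here: all the analytic content has already been spent in Lemmas~\ref{T0 1} and~\ref{T0 2}, and upstream in Theorems~\ref{f_{chi}} and~\ref{g_{chi}}, which pin down the scalars by which $T_{0}^{2}$ and $T_{0}'^{2}$ act via Schur's lemma. The only points that genuinely require care are the normalizations $\phi_{q}(v)=\sqrt{q^{-1}}$, $\phi_{1}(v)=1$, $\tau_{1}(w)=-i$ and $\tau_{0}(w)=1$ fixed before Lemma~\ref{T0 1}, since it is precisely these choices that make the factors $v^{f_{\Lambda}}$ and $w^{g_{\Lambda}}$ cancel cleanly and leave the stated $(-i)^{g_{\Lambda}}$ and $\sqrt{q^{-f_{\Lambda}}}$. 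For the final assertion of the statement, the value for each classical type is then read off by plugging in the explicit expressions $f_{\Lambda}=l(w_{0})-k_{\Lambda}$ and the formulas for $g_{\Lambda}$ obtained in Subsections~\ref{f lambda} and~\ref{g lambda}, together with the case-by-case evaluations of $\chi_{\Lambda}^{0}(\widetilde{w_{0}})$ carried out in Subsection~\ref{T_{0}}; this is pure arithmetic substitution and needs no further argument.
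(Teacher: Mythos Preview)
Your proof is correct and matches the paper's approach exactly: the corollary is stated there without a separate proof, being an immediate consequence of chaining Lemmas~\ref{T0 1} and~\ref{T0 2} through the common value $\chi_{\Lambda}^{1}(\sigma_{0})$, precisely as you do. Your remarks on why $\chi_{\Lambda}^{1}(\sigma_{0})$ is unambiguous and on the role of the normalizations of $v$ and $w$ are accurate and make explicit what the paper leaves implicit.
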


\section{Tables}\label{tables}

In Table~\ref{table:1}, using Notations~\ref{notation psi} and~\ref{alt}, we show the values of $k_{\Lambda}$ and $g_{\Lambda}$ for each type, where $\Lambda \in \mathcal{Q}_{d,n}, \mathcal{R}_{d,n}$ or $\mathcal{S}_{d,n}$. 
The values $M_{1}$ and $M_{2}$ represent the number of entries of $\psi$ that belong to $C_{d}^{2}$ and $C_{d}\backslash C_{d}^{2}$, respectively. Recall that $T_{w_{0}}^{2}\in \mathcal{H}(G,U)$ acts by scalar multiplication in $\chi_{\Lambda}^{q}$ by scalar
\[q^{k_{\Lambda}-l(w_{0})}(-1)^{g_{\Lambda}},\]
which implies that 
\[\chi_{\Lambda}^{q}(T_{w_{0}}^{2})=\chi_{\Lambda}^{0}(1)\  q^{k_{\Lambda}-l(w_{0})}(-1)^{g_{\Lambda}}.\]

\begin{table}[H] \normalfont
\centering
\begin{tabular}{ |c|c|c|c| } 
\hline
Type & $k_{\Lambda}$  & $g_{\Lambda}$   \\ \hline
$A$ & $n(\Lambda')-n(\Lambda)$ & $M_{1}M_{2}$  \\   \hline 
\rule{0pt}{2.3ex} $A^{\ast}$ & $n(\Lambda \rq)-n(\Lambda)$ & $M_{1}M_{2}$ \\ \hline 
\rule{0pt}{4ex} $B$ & \begin{tabular}{@{}c@{}}$n(\Lambda \rq)-n(\Lambda)+n(\Lambda_{1}\rq,\Lambda_{2}\rq)-n(\Lambda_{1},\Lambda_{2})+$ \\ $\size{(\Lambda_{1},\Lambda_{2})}^{\text{alt}}$
\end{tabular}  & $2M_{1}M_{2}$\\  \hline 
\rule{0pt}{4ex}$C$& 
\begin{tabular}{@{}c@{}}$n(\Lambda \rq)-n(\Lambda)+n(\Lambda_{1}\rq,\Lambda_{2}\rq)-n(\Lambda_{1},\Lambda_{2})$ \\ $+\size{\Lambda_{1}}^{\text{alt}}$ 
\end{tabular}
& $2M_{1}M_{2}+M_{2}$ \\ \hline
\rule{0pt}{2.3ex}$D$& $n(\Lambda \rq)-n(\Lambda)+n(\Lambda_{1}\rq,\Lambda_{2}\rq)-n(\Lambda_{1},\Lambda_{2})$& $2M_{1}M_{2}$ \\ \hline
\end{tabular}
\caption{Values of $k_{\Lambda}$ and $g_{\Lambda}$.}
\label{table:1}
\end{table}

In Tables~\ref{table:2} and~\ref{table:3}, we show the value of $\chi_{\Lambda}^{0}(\widetilde{w_{0}})$ for types $A,B$ and $C$. In Table~\ref{table:2}, each $\sigma_i$ denotes a product of $\textstyle \floor{\frac{\size{\lambda_i}}{2}} $ disjoint transpositions. In Table~\ref{table:4}, we show the value of $\mu(\widetilde{w_{0}})$, for $\mu\in\Irr(S_{d,n}^{D})$ a factor in the restriction of $\chi_{\Lambda}^{0}\in\Irr (S_{d,n}^{B})$. Recall that 
\[\chi_{\Lambda}^{q}(T_{w_{0}})=\chi_{\Lambda}^{0}(\widetilde{w_{0}})(-i)^{g_{\Lambda}}\sqrt{q^{-f_{\Lambda}}},\]
where $f_{\Lambda}=l(w_{0})-k_{\Lambda}$.
\begin{table}[H] \normalfont
\centering
\begin{tabular}{ |c|c|c| } 
\hline
\rule{0pt}{2.3ex} & $\chi_{\Lambda}^{0}(\widetilde{w_{0}})$  \\[0.3ex] \hline
$n$ even, not all blocks of even size & $0$   \\   \hline 
$n$ odd, more than one block of odd size  & $0$\\ \hline 
$n$ even, all blocks of even size \rule{0pt}{4ex}  & $\frac{(n/2)!}{(\size{\lambda_1}/2)!
\cdots  (\size{\lambda_d }/2)!}\ \S^{\lambda_1}(\sigma_1)
\cdots  \S^{\lambda_d}(\sigma_d)$  \\[3 ex] \hline 
$n$ odd, one block of odd size \rule{0pt}{4ex} & $\frac{((n-1)/2)!}{(\size{\lambda_1}/2)!
\cdots (\size{\lambda_{d-1} }/2)! ((\size{\lambda_{d}}-1)/2)!}\ \S^{\lambda_1}(\sigma_1)
\cdots \S^{\lambda_d}(\sigma_d)$  \\[3 ex] \hline
\end{tabular}
\caption{Value of $\chi_{\Lambda}^{0}(\widetilde{w_{0}})$ in type $A$.}
\label{table:2}
\end{table}

\begin{table}[H] \normalfont
\centering
\begin{tabular}{ |c|c|c| } 
\hline
\rule{0pt}{2.3ex} & $\chi_{\Lambda}^{0}(-\Id)$  \\ \hline
$\Lambda$ is not of the form $(\Lambda_{1},\Lambda_{2})$ & $0$   \\   \hline 
\rule{0pt}{4ex}$\Lambda=(\Lambda_{1},\Lambda_{2})$  & 
$\binom{n}{\Lambda}(-1)^{\size{\mu_1}+\size{\mu_2}}\S^{\Lambda}(1)$
\\[2ex] \hline 
\end{tabular}
\caption{Value of $\chi_{\Lambda}^{0}(-\Id)$ in types $B$ and $C$.}
\label{table:3}
\end{table}

\begin{table}[H] \normalfont
\centering
\begin{tabular}{ |c|c|c| } 
\hline
\rule{0pt}{2.3ex} & $\mu(\widetilde{w_{0}})$  \\ \hline
\rule{0pt}{2.7ex} $n$ even, $(\lambda_{1},\lambda_{2})\neq(\mu_{1},\mu_{2})$ & $\chi_{\Lambda}^{0}(-\Id)$ \\[0.5ex]   \hline 
 \rule{0pt}{4ex} $n$ even, $(\lambda_{1},\lambda_{2})=(\mu_{1},\mu_{2})$  & $\frac{\chi_{\Lambda}^{0}(-\Id)}{2}$\\[1.2 ex] \hline 
\rule{0pt}{4ex} \begin{tabular}{@{}c@{}}$n$ odd, $\Lambda \neq (\Lambda_{1},\Lambda_{2})$ and $\Lambda \neq (\Lambda_{1},\Lambda_{2},\lambda_{3})$,\\
where $\size{\lambda_{3}}=1$ 
\end{tabular}  & $0$ \\[2 ex] \hline 
$n$ odd, $\Lambda=(\Lambda_{1},\Lambda_{2})$, and $(\lambda_{1},\lambda_{2})\neq(\mu_{1},\mu_{2})$ & \rule{0pt}{5ex} 
$\frac{(n-1)!\ \S^{\Lambda}(1)}{\Lambda!}\size{\Lambda}^{\text{alt}}$ 
\\[2 ex] \hline 
$n$ odd, $\Lambda=(\Lambda_{1},\Lambda_{2})$, and $(\lambda_{1},\lambda_{2})=(\mu_{1},\mu_{2})$ \rule{0pt}{3ex} 
&  $0$
\\[0.5 ex] \hline
\rule{0pt}{4ex} \begin{tabular}{@{}c@{}} $n$ odd, $\Lambda=(\Lambda_{1},\Lambda_{2},\lambda_{3})$ with $\size{\lambda_{3}}=1$,\\ and $(\lambda_{1},\lambda_{2})\neq (\mu_{1},\mu_{2})$  \end{tabular} & $\chi_{(\Lambda_{1},\Lambda_{2})}^{0}(-\Id)$ \\ \hline
\rule{0pt}{5ex} \begin{tabular}{@{}c@{}} $n$ odd, $\Lambda=(\Lambda_{1},\Lambda_{2},\lambda_{3})$ with $\size{\lambda_{3}}=1$,\\ and $(\lambda_{1},\lambda_{2})= (\mu_{1},\mu_{2})$  \end{tabular} & $\frac{\chi_{(\Lambda_{1},\Lambda_{2})}^{0}(-\Id)}{2}$ \\[2 ex] \hline
\end{tabular}
\caption{Value of $\mu(\widetilde{w_{0}})$ in type $D$.}
\label{table:4}
\end{table}

For $\mu \in \Irr(S_{d,n}^{A^{\ast}})$ a factor in the restriction of $\chi_{\Lambda}^{0}\in \Irr(S_{d,n}^{A})$, where $\Lambda\in \mathcal{Q}_{d,n}$ and $n$ odd, we have
\[\mu(\widetilde{w_{0}})=\frac{o(\Lambda)}{d}\chi_{\Lambda}^{0}(\widetilde{w_{0}}),\]
where $o(\Lambda)$ is defined as in Example~\ref{Sl_{n} caracteres}. We were not able to compute this value for $n$ even.

\medskip\noindent{\bf Acknowledgement.} We thank Leandro Vendramin and Marco Farinati for their helpful comments. The first author is supported by the FWO grant G0F5921N. The second author by
CONICET PIP 11220210100220CO.

\printbibliography

\medskip
\noindent 

\noindent
Emiliano Liwski \\
Department of Mathematics, KU Leuven, Celestijnenlaan 200B, B-3001 Leuven, Belgium
\\ E-mail address: {\tt emiliano.liwski@kuleuven.be}

\medskip  

\noindent
Martín Mereb \\
 Departamento de Matemática, Facultad de Ciencias Exactas y Naturales \& IMAS \\
 Universidad de Buenos Aires \&  CONICET Argentina
 \\  E-mail address: 
 {\tt mmereb@dm.uba.ar}
 \medskip

\end{document}